\documentclass[leqno,11pt]{amsart}
\usepackage{amsmath,amscd,amsthm,amsxtra}
\usepackage{epsfig,graphics,color,colortbl}
\usepackage{amssymb,latexsym}
\usepackage{mathabx}
\usepackage{mathrsfs,eucal,upgreek}
\usepackage{hyperref}
\usepackage{tikz-cd}
\usepackage{arydshln}
\usepackage{ytableau}
\usepackage{adjustbox}
\usepackage{listings}
\usepackage{dirtytalk}
\usepackage[draft]{todonotes}
\usepackage{color}
\usepackage[color, poly,all]{xy}
\usepackage{stackengine}
\stackMath
\usetikzlibrary{arrows}

\usepackage{dynkin-diagrams}

\setlength{\textwidth}{14cm} \setlength{\textheight}{21cm}
\setlength{\oddsidemargin}{1.2cm} \setlength{\evensidemargin}{1.2cm}

\newtheorem{thm}{\bf Theorem}[section]
\newtheorem{df}[thm]{\bf Definition}
\newtheorem{prop}[thm]{\bf Proposition}
\newtheorem{cor}[thm]{\bf Corollary}
\newtheorem{lem}[thm]{\bf Lemma}
\newtheorem{rem}[thm]{\bf Remark}
\newtheorem{ex}[thm]{\bf Example}

\numberwithin{equation}{section}

%%%%%%%%%%%%%%%%% Table of contents %%%%%%%%%%%%%%%%%%%%%%%%%
\makeatletter
%Table of Contents
\setcounter{tocdepth}{2}

% Add bold to \section titles in ToC and remove . after numbers
\renewcommand{\tocsection}[3]{%
  \indentlabel{\@ifnotempty{#2}{\bfseries\ignorespaces#1 #2.\quad}}\bfseries#3}
% Remove . after numbers in \subsection
\renewcommand{\tocsubsection}[3]{%
  \indentlabel{\@ifnotempty{#2}{\ignorespaces#1 #2.\quad}}#3}
%\let\tocsubsubsection\tocsubsection% Update for \subsubsection
%...

\newcommand\@dotsep{4.5}
\def\@tocline#1#2#3#4#5#6#7{\relax
  \ifnum #1>\c@tocdepth % then omit
  \else
    \par \addpenalty\@secpenalty\addvspace{#2}%
    \begingroup \hyphenpenalty\@M
    \@ifempty{#4}{%
      \@tempdima\csname r@tocindent\number#1\endcsname\relax
    }{%
      \@tempdima#4\relax
    }%
    \parindent\z@ \leftskip#3\relax \advance\leftskip\@tempdima\relax
    \rightskip\@pnumwidth plus1em \parfillskip-\@pnumwidth
    #5\leavevmode\hskip-\@tempdima{#6}\nobreak
    \leaders\hbox{$\m@th\mkern \@dotsep mu\hbox{.}\mkern \@dotsep mu$}\hfill
    \nobreak
    \hbox to\@pnumwidth{\@tocpagenum{\ifnum#1=1\bfseries\fi#7}}\par% <-- \bfseries for \section page
    \nobreak
    \endgroup
  \fi}
\AtBeginDocument{%
\expandafter\renewcommand\csname r@tocindent0\endcsname{0pt}
}
\def\l@subsection{\@tocline{2}{0pt}{2.5pc}{5pc}{}}
\makeatother
%%%%%%%%%%%%%%%%% Table of contents %%%%%%%%%%%%%%%%%%%%%%%%%

%%%%%%%%%%%%%%%%%%%%%%%%%%%%%%%%%%%%%%%%%%%%%%%%%%%%%%%%%%%%%%%%%%%%%%%%%%%%%%%

\newcommand{\ag}{\hat{\mathfrak g}}
\newcommand{\aI}{\hat{\text{I}}}
\newcommand{\g}{{\mathfrak g}}
\newcommand{\U}{U_q'(\hat{\mathfrak g})}
\newcommand{\Esix}{\text{E}_6^{(1)}}
\newcommand{\Eseven}{\text{E}_7^{(1)}}
\newcommand{\te}{\tilde{e}}
\newcommand{\tf}{\tilde{f}}
\newcommand{\weyl}{{W}}
\newcommand{\aweyl}{\widehat{{W}}}
\newcommand{\eweyl}{\widetilde{{W}}}
\newcommand{\B}{{\bf B}}
\newcommand{\Asix}{\Delta^{\text{J}}_{6, r}}
\newcommand{\Asixa}{\Delta^{\text{J}}_{6, 1}}
\newcommand{\Asixb}{\Delta^{\text{J}}_{6, 6}}
\newcommand{\Asev}{\Delta^{\text{J}}_7}
\newcommand{\biJ}{{\bf i}^{\text{J}}}
\newcommand{\wJ}{w^{{\text{J}}}}
\newcommand{\mf}{\mathfrak}
\newcommand{\J}{\text{J}}
\newcommand{\I}{\text{I}}

%%%%%%%%%%%%%%%%%%%%%%%%%%%%%%%%%%%%%%%%%%%%%%%%%%%%%%%%%%%%%%%%%%%%%%%%%%%%%%%

\begin{document}

\title[A realization of KR crystals for type E arising from translations]
{A combinatorial realization of Kirillov-Reshetikhin crystals for type E arising from translations}

\author{IL-SEUNG JANG}

\address{Department of Mathematical Sciences, Seoul National University, Seoul 08826, Republic of Korea}
\email{is\_jang@snu.ac.kr}

\keywords{quantum groups, affine crystals, Kirillov-Reshetikhin crystals, quantum nilpotent subalgebras, PBW crystals, type E}
\subjclass[2010]{17B37, 22E46, 05E10}

\thanks{This work is supported by the National Research Foundation of Korea(NRF) grant funded by the Korea government(MSIT) (No. 2020R1A5A1016126)}

\begin{abstract}
The main purpose of this paper is to give a combinatorial realization of Kirillov-Reshetikhin (KR simply) crystals $B^{r, s}$ for type $\text{E}_n^{(1)}$ with a minuscule node $r$ and $s \ge 1$. To do this, we describe explicitly the crystal of the quantum nilpotent subalgebra associated with the translation by the negative of the $r$-th fundamental weight. Then the crystal can be extended as an affine crystal, in which a certain subcrystal characterized by the $\varepsilon_r^*$-statistic is isomorphic to $B^{r,s}$ as an affine crystal, where $\varepsilon_r^*$ is also realized precisely in terms of triple and quadruple paths.
\end{abstract}

\maketitle
\setcounter{tocdepth}{1}
\tableofcontents

\section{Introduction}

Let $\g$ be a finite-dimensional simple Lie algebra over $\mathbb{C}$.
Let $\ag$ be an affine Lie algebra corresponding to $\g$ of untwisted type and let $\U$ be the quantum affine algebra associated to $\ag$ without the derivation. The finite-dimensional irreducible $\U$-modules (of type $1$) are classified by the Drinfeld polynomials \cite{CP95} and it is well-known that a certain class of such modules known as {\em Kirillov-Reshetikhin modules} plays an important role in the category of the finite-dimensional $\U$-modules (see, for example, \cite{CH}).

Let $W^{(r)}_{s, a}$ be the KR module associated to $1 \le r \le n$, $s \in \mathbb{Z}_+$ and $a \in \mathbb{C}^{\times}$.
It was conjectured by Hatayama et al. \!\!\cite{HKOTY99} that for $1 \le r \le n$ and $s \in \mathbb{Z}_+$, there exists $a_{r, s} \in \mathbb{C}^\times$ such that $W^{(r)}_{s, a_{r, s}}$ has the crystal pseudo-base introduced in \cite{KKMMNN92b} (cf. \cite{Kas91}).
The conjecture have been proved for $r$ in the orbit of or adjacent to $0$ in all affine types \cite{KKMMNN92a,KKMMNN92b}, all non-exceptional types \cite{OS08}, types $\text{G}_2^{(1)}$ and $\text{D}_4^{(3)}$ \cite{Nao18}, and types $\text{E}_{6,7,8}^{(1)}$, $\text{F}_4^{(1)}$, $\text{E}_6^{(2)}$ with near adjoint nodes \cite{NaoScr21}.
Let $B^{r, s}$ be the crystal of the KR module, which is often called {\em KR crystal} for short.
Then it is an interesting problem to describe the structure of $B^{r, s}$ in a combinatorial way.

As the main result of this paper, we give a combinatorial realization of the KR crystal $B^{r,s}$ for type $\text{E}_n^{(1)}$ with a minuscule node $r$ and $s \ge 1$ following the approach in \cite{JK19}. 

Let us explain it in more details. 
Let $B(\infty)$ be the crystal associated to $U_q^-({\mf g})$, and let $B(\Lambda)$ be the crystal of an irreducible highest weight $U_q({\mf g})$-module with highest weight $\Lambda$, where $\Lambda$ is an integral dominant weight of ${\mf g}$.
It was known in \cite{Kas95} that there is a crystal embedding from $B(\Lambda)$ into $B(\infty)$, and the image of $B(\Lambda)$ is described by using the $\varepsilon_i^*$-statistic.
Therefore, one can describe the crystal $B(\Lambda)$ in $B(\infty)$. By realizing $B(\infty)$ explicitly (see Section \ref{sec:PBW crystals}) in terms of {\em PBW basis} \cite{Lu90, Sai}, we have a tensor product decomposition of $B(\infty)$, namely,
\begin{equation} \label{eq:intro decomp}
	B(\infty) \cong \B^\J \otimes \B_\J,
\end{equation}
where $\B^\J$ and $\B_\J$ are subcrystals of $B(\infty)$ given in \eqref{eq:B^J and B_J}. By \eqref{eq:intro decomp} and the characterization of $B(s\varpi_r)$ in $B(\infty)$, we also have
\begin{equation*}
\begin{tikzpicture}
	\node (1) at (-1, 0) {$B(s\varpi_r)$};
	\node (2) at (2, 0) {$B(\infty)$};
	\node (3) at (2, -1.5) {\,\,$\B^\J$};
	\node (4) at (2, -0.75) {$\bigcup$};
	\draw[right hook->] (1) -- (2);
	\draw[right hook->] (1) -- (3);
\end{tikzpicture}
\end{equation*}

In other words, the crystal $\B^\J$ can be viewed as a limit of $B(s\varpi_r)$ as $s \rightarrow \infty$. Moreover, it can be extended as an affine crystal by giving $0$-crystal operator explicitly (see Section \ref{sec:polytope of KR crystals}). Then we denote by $\B^{\J,s}$ the image of $B(s\varpi_r)$ in $\B^\J$, which is an affine crystal.
It was well-known in \cite{Cha01} that $B^{r,s} \cong B(s\varpi_r)$ classically, that is, by ignoring $0$-crystal operators, for a minuscule $r$. Then we will prove that
\begin{equation*}
	\B^{\J, s} \cong B^{r, s},
\end{equation*}
which is the main result of this paper. In the proof, the uniqueness of KR crystals for type E \cite{JS} (cf. Remark \ref{rem:uniqueness E7}) plays a crucial role (see also \cite{FOS} for non-exceptional types).

We should remark that there are several realizations of $B(\infty)$. For example, a 
{\em polyhedral realization} of $B(\infty)$ was known by Nakashima-Zelevinsky in \cite{NZ}. It would be interesting to interpret $\B^\J$ and $\B^{\J,s}$ in this realization as affine crystals (cf. Remark \ref{rem:polytope}).

The motivation of this work is an observation that a reduced expression ${\bf i}^{\J}$ obtained from $t_{-\varpi_r}$ is simply braided in the sense of \cite{SST} up to $2$-term braid moves, which was essentially introduced by Littelmann \cite{Li98} (cf.~\cite{SST}). This yield a nice reduced expression of the longest element $w_0$ of Weyl group by which it enables us to describe the structure of $B(\infty)$ explicitly. In particular, the crystal operators of $B(\infty)$ are given as the signature rule induced from the tensor product rule of crystals, and we also obtain an explicit formula of the $\varepsilon_r^*$-statistic on $\B^{\J} \subset B(\infty)$ in terms of a combinatorial object in Definition \ref{df:paths}, which is called path.

A combinatorial model of the KR crystals $B^{r, s}$ in terms of tableaux was already known in \cite{JS} for type $\Esix$ with $r = 1, 2, 6$ and in \cite{BS} for type $\Eseven$ with $r = 7$.
Both of them use a promotion operator to define the $0$-crystal operators as in \cite{Shi} for type $\text{A}_n^{(1)}$ (cf. \cite{FOS}).
On the other hand, there exists another model of $B^{r, s}$ without the promotion operator for type $\text{A}_n^{(1)}$ and types $\text{D}_{n+1}^{(1)}$, $\text{C}_n^{(1)}$, $\text{D}_n^{(1)}$ with exceptional nodes $r$ \cite{Kw13} by using the RSK correspondence \cite{Knu}.

It would be interesting to ask whether there exists a tableau model of $B^{r, s}$ without a promotion operator for type $\text{E}_n^{(1)}$ and then an isomorphism of affine crystals between our model and a (previous) tableau model in type $\text{E}_n^{(1)}$. 
We would like to remark that it was proved in \cite{Kw13} that the RSK correspondence can be extended to the isomorphisms of affine crystals of type $\text{A}_n^{(1)}$ between the tableau and polytope model of $B^{r, s}$.
Recently, it is also shown in \cite{JK19} that Burge correspondence \cite{Bur}, which is an analog of the RSK for type $\text{D}_n$, can be extended to an isomorphism of affine crystals of type $\text{D}_n^{(1)}$ between the tableau model of $B^{n,s}$ in \cite{Kw13} and the polytope model of $B^{n, s}$.

On the other hand, it is still an open problem to describe KR crystals for all types in a uniform way. 
There are several results by many authors related to this problem, see \cite{LNSSS, LT, LL} and references therein. We hope to find a connection with the results to understand the structure of KR crystals more deeply and extend the approach of this paper beyond minuscule cases if it is possible.

This paper is organized as follows.
In Section \ref{sec:preliminaries}, we review the necessary background on the crystals.
In Section \ref{sec:simply rex}, we give the simply braided reduced expressions for types ADE.
In Section \ref{sec:PBW crystals}, for type $\text{E}_n$ with $n = 6, 7$, we describe the crystal operators of the crystal of $U_q^-({\mf g})$ and then we obtain the crystal $\B^{\J}$ of the quantum nilpotent subalgebra associated to $t_{-\varpi_r}$.
In Section \ref{sec:polytope of KR crystals}, we give a combinatorial realization of the KR crystal $B^{r, s}$ for type $\text{E}_n^{(1)}$ with a minuscule $r$ (Theorem \ref{thm:main theorem}). 
In Section \ref{subsec:epsilon}, we provide an explicit combinatorial formula of $\varepsilon_r^*$ on $\B^{\J}$ in terms of the triple and quadruple paths (Theorem \ref{thm:formula of epsilon star}). 
\vskip 3mm 
\noindent{\bf Acknowledgement.} 
The author is very grateful to Prof.~Jae-Hoon Kwon for many stimulating, helpful discussions and encouraging him in the preparation of this paper; to Prof.~Euiyong Park for his interest in this work and valuable comments. 
He would like to thank Prof.~Cristian Lenart for letting him know the references \cite{LNSSS, LL, LT} with his kind explanation and suggesting possible directions, and Travis Scrimshaw for his interest in this work, stimulating discussions, and letting him know the references \cite{Hiro, Ste}. Finally, he also would like to thank the anonymous referee for helpful and detailed comments.

%%%%%%%%%%%%%%%%%%%%%%%%%%%%%%%%%%%%%%%%%%%%%%%

\section{Preliminaries} \label{sec:preliminaries}

\subsection{Crystals}

Let us give a brief review on crystals (see \cite{HK,Kas91,Kas95} for more details). Let $\mathbb{Z}_+$ denote the set of non-negative integers. Let $\g$ be the Kac-Moody algebra associated to a symmetrizable generalized Cartan matrix $A =(a_{ij})_{i,j\in \I}$ with an index set $\I$. Let $P^\vee$ be the dual weight lattice, $P = {\rm Hom}_\mathbb{Z}( P^\vee,\mathbb{Z})$ the weight lattice, $Q$ the root lattice, $\Pi^\vee=\{\,h_i\,|\,i\in I\,\}\subset P^\vee$ the set of simple coroots, and $\Pi=\{\,\alpha_i\,|\,i\in I\,\}\subset P$ the set of simple roots of $\g$ such that $\langle \alpha_j,h_i\rangle=a_{ij}$ for $i,j\in \I$. Let $P^+$ be the set of integral dominant weights and let $\varpi_i$ be the $i$th fundamental weight for $i \in \I$. 
For an indeterminate $q$, let $U_q(\g)$ be the quantized enveloping algebra of $\g$ generated by $e_i$, $f_i$, and $q^h$ for $i\in \I$ and $h\in P^\vee$ over $\mathbb{Q}(q)$. We denote by $U_q^{-}(\g)$ the subalgebra of $U_q(\g)$ generated by $f_i$ for all $i \in \I$.

A {\it $\g$-crystal} (or {\it crystal} if there is no confusion on $\g$) is a set $B$ together with the maps ${\rm wt} : B \rightarrow P$, $\varepsilon_i,\, \varphi_i: B \rightarrow \mathbb{Z}\cup\{-\infty\}$ and $\te_i,\, \tf_i: B \rightarrow B\cup\{{\bf 0}\}$ for $i\in \I$ satisfying certain axioms. We denote by $B(\infty)$ the crystal associated to $U^-_q(\g)$.
Let $\ast$ be the $\mathbb{Q}(q)$-linear anti-automorphism of $U_q(\g)$ such that $e_i^\ast =e_i$, $f_i^\ast=f_i$, and $(q^h)^\ast =q^{-h}$ for $i\in \I$ and $h\in P$. Then $\ast$ induces a bijection on $B(\infty)$. For $i\in \I$, we define $\te_i^\ast =\ast \circ \te_i \circ \ast$ and $\tf_i^\ast =\ast \circ \tf_i \circ \ast$ on $B(\infty)$. 
In particular, $\varepsilon_i^*(b)$ is defined by $\max \{ \, n \, | \, \te_i^{*n} b \neq {\bf 0} \, \}$.
For $\Lambda\in P^+$, we denote by $B(\Lambda)$ the crystal associated to an irreducible highest weight $U_q(\g)$-module $V(\Lambda)$ with highest weight $\Lambda$.
For $\mu\in P$, let $T_\mu=\{t_\mu\}$ be a crystal, where ${\rm wt}(t_\mu)=\mu$, and $\varphi_i(t_\mu)=-\infty$ for all $i\in \I$.

For crystals $B_1$ and $B_2$, the {\em tensor product $B_1 \otimes B_2$} is defined to be $B_1 \times B_2$ as a set with elements denoted by $b_1 \otimes b_2$, where
{\allowdisplaybreaks
\begin{gather*}
	 {\rm wt}(b_1 \otimes b_2) = {\rm wt}(b_1) + {\rm wt}(b_2), \\
	 \varepsilon_{i}(b_{1} \otimes b_{2}) = \max\{ \varepsilon_{i}(b_{1}), \varepsilon_{i}(b_{2})-\langle {\rm wt}(b_{1}), h_i \rangle \}, \\
	 \varphi_{i}(b_{1} \otimes b_{2}) = \max\{ \varphi_{i}(b_{1})+\langle {\rm wt}(b_2), h_i \rangle, \varphi_{i}(b_{2}) \}, 
\end{gather*}
}
\begin{equation} \label{eq:tensor_product_rule}
\begin{split}
	 \tilde{e}_{i}(b_{1} \otimes b_{2}) = \left\{ \begin{array}{cc} \tilde{e}_{i}b_{1} \otimes b_{2} & \textrm{if} \ \varphi_{i}(b_{1}) \ge \varepsilon_{i}(b_{2}), \\ b_{1} \otimes \tilde{e}_{i}b_{2} & \textrm{if} \ \varphi_{i}(b_{1}) < \varepsilon_{i}(b_{2}), \end{array} \right.
	 \\
	 \tilde{f}_{i}(b_{1} \otimes b_{2}) = \left\{ \begin{array}{cc} \tilde{f}_{i}b_{1} \otimes b_{2} & \textrm{if} \ \varphi_{i}(b_{1}) > \epsilon_{i}(b_{2}), \\ b_{1} \otimes \tilde{f}_{i}b_{2} & \textrm{if} \ \varphi_{i}(b_{1}) \le \epsilon_{i}(b_{2}),\end{array} \right.
\end{split}
\end{equation}
for $i \in \I$. Here, we assume that $\textbf{0} \otimes b_{2} = b_{1} \otimes \textbf{0} = \textbf{0}$. Then $B_1 \otimes B_2$ is a crystal.

For two crystals $B$ and $B'$, an {\em isomorphism of crystals} between $B$ and $B'$ is a bijection $\Psi : B \cup \{ {\bf 0} \} \rightarrow B' \cup \{ {\bf 0} \}$ such that $\Psi({\bf 0}) = {\bf 0}$, $\varepsilon_i(\Psi(b)) = \varepsilon_i(b)$, $\varphi_i(\Psi(b)) = \varphi_i(b)$, $\tilde{e}_i\Psi(b) = \Psi(\tilde{e}_i b)$ and $\tilde{f}_i \Psi(b) = \Psi(\tilde{f}_i b)$ for all $i \in \I$.

\subsection{Quantum nilpotent subalgebra $U_q^-(w)$}
Let $\weyl$ be Weyl group of $\g$ generated by the simple reflections $s_i$ $(i \in \I)$. For $w \in \weyl$, let $R(w)$ be the set of reduced expressions of $w$, that is, 
\begin{equation*}
	R(w)
	=
	\left\{\,
		{\bf i} = (i_1, \dots, i_m) \, \mid \, w = s_{i_1} \dots s_{i_m} \, \text{ and $m$ is minimal\,}
	\right\}.
\end{equation*}
Let $\Phi^+$ be the set of positive roots of $\g$.
It is well-known that for ${\bf i} = (i_1, \dots, i_m) \in R(w)$, 
\begin{equation} \label{eq:beta_k}
 \left\{
 	\beta_1 := \alpha_{i_1}\,, \,
	\beta_2 := s_{i_1}(\alpha_{i_2})\,, \cdots\,,\,
	\beta_m := s_{i_1}\dots s_{i_{m-1}}(\alpha_{i_m})\,
 \right\} \subset \Phi^+,
\end{equation} (see \cite{Pa} and reference therein).

For each $i \in \I$, let $T_i$ be the $\mathbb{Q}(q)$-algebra automorphism of $U_q(\g)$, where $T_i$ is equal to $T_{i, 1}^{''}$ in \cite{Lu10}.
For $w \in \weyl$ and ${\bf i} = (i_1, \dots, i_m) \in R(w)$,  put
\begin{equation*} %\label{eq:single root vectors}
	f_{\beta_k} = T_{i_1} \dots T_{i_{k-1}}(f_{i_k}) \quad (1 \le k \le m),
\end{equation*}
and for ${\bf c} = (c_{\beta_1}, \dots, c_{\beta_m}) \in \mathbb{Z}_+^m$, 
\begin{equation*} %\label{eq:root vectors}
	b_{\bf i}({\bf c}) = f_{\beta_1}^{(c_{\beta_1})} f_{\beta_2}^{(c_{\beta_2})} \cdots f_{\beta_m}^{(c_{\beta_m})},
\end{equation*}
where $f_{\beta_k}^{(c_{\beta_k})}$ is the divided power of $f_{\beta_k}$ for $1 \le k \le m$.
Then we denote by $U_q^-(w)$ the $\mathbb{Q}(q)$-subspace of $U_q^-(\g)$ generated by $\{ \, b_{\bf i}({\bf c}) \, | \, {\bf c} \in \mathbb{Z}^m \, \}$.
It is known in \cite{Lu10} that $U_q^-(w)$ does not depend on ${\bf i}$. In particular, if $\g$ is of finite type and $w_0$ is the longest element of $\weyl$, then $U_q^-(w_0) = U_q^-(\g)$.

The commutation relation on the root vectors $f_{\beta_k}$ $(1 \le k \le m)$ is well-known as Levendorskii-Soibelman formula (e.g. \!\!\!see \cite{Ki12} and references therein) and this implies that the subspace $U_q^-(w)$ is the $\mathbb{Q}(q)$-subalgebra of $U_q^-(\g)$ generated by $\{ \, f_{\beta_k}  \, | \,1 \le k \le m \, \}$, which is called {\em quantum nilpotent subalgebra} associated to $w$.

\subsection{PBW crystal $\B_{\bf i}$}
Assume that $\g$ is of finite type. Let us review the formulation of $B(\infty)$ for finite types by using Poincar\'e-Birkhoff-Witt type bases \cite{Lu90, Lu90-2, Sai} (cf. \cite{Kas91}).

Let us take $w = w_0$ in \eqref{eq:beta_k}. Then the set of positive roots associated to $w_0$ is equal to $\Phi^+$, and the set
$$
	B_{\bf i}
	:=
	\left\{
		\, b_{\bf i}({\bf c}) \, \mid \, {\bf c} \in \mathbb{Z}_+^N
	\right\}
$$
is a $\mathbb{Q}(q)$-basis of $U_q^-(\g)$, which is often called a {\em PBW basis} associated to ${\bf i}$.
Let $\mathcal{A}_0$ be the subring of $\mathbb{Q}(q)$ consisting of rational functions regular at $q = 0$.
The $\mathcal{A}_0$-lattice of $U_q^-(\g)$ generated by $B_{\bf i}$ is independent of the choice of ${\bf i}$, which we denote by $L(\infty)$.
Let $\pi : L(\infty) \, \longrightarrow \, L(\infty) / qL(\infty)$ be the canonical projection. Let $B(\infty)$ be the image of $B_{\bf i}$ under $\pi$. Then it is known in \cite{Lu90-2, Sai} that the pair $(L(\infty), B(\infty))$ coincides with the Kashiwara's crystal base of $U_q^-(\mathfrak{g})$ \cite{Kas91}

We identify 
$$
\B_{\bf i} := \mathbb{Z}_+^N
$$ 
with the crystal $\pi(B_{\bf i}) (= B(\infty))$ under the map ${\bf c} \mapsto b_{\bf i}({\bf c})$, and call ${\bf c} \in \B_{\bf i}$ an {\em ${\bf i}$-Lusztig data} associated to ${\bf i}$. Then $\B_{\bf i}$ is called the crystal of ${\bf i}$-Lusztig datum. We also call it {\em PBW crystal} for short if there is no confusion for ${\bf i}$.

\section{Simply braided reduced expressions} \label{sec:simply rex}

\subsection{Simply braided words}
Let us recall that a total order $\prec$ on the set $\Phi^+$ of positive roots. Let us assume that $\g$ is of simply-laced type for simplicity. A order $\prec$ is called {\em convex} 
if either $\gamma \prec \gamma' \prec \gamma''$ or $\gamma'' \prec \gamma' \prec \gamma$ whenever $\gamma'=\gamma+\gamma''$ for $\gamma, \gamma', \gamma''\in \Phi^+$. It is well-known that there exists a one-to-one correspondence between $R(w_0)$ and the set of convex orders on $\Phi^+$, where the convex order $\prec$ 
associated with ${\bf i}=(i_1,\ldots,i_N)\in R(w_0)$ is given by 
\begin{equation} \label{eq:convex}
\beta_1\prec \beta_2\prec \ \ldots \prec \beta_N,
\end{equation} 
where $\beta_k$ is as in \eqref{eq:beta_k} \cite{Pa}.

There exists a reduced expression ${\bf i}' \in R(w_0)$ obtained from ${\bf i}$ by a 3-term braid move 
$(i_k,i_{k+1},i_{k+2}) \rightarrow (i_{k+1},i_{k},i_{k+1})$ with $i_k=i_{k+2}$ if and only if 
$\{\,\beta_{k},\beta_{k+1},\beta_{k+2}\,\}$ 
forms the positive roots of type $A_2$,
where the corresponding convex order $\prec'$ is given by replacing $\beta_k\prec \beta_{k+1}\prec \beta_{k+2}$ with $\beta_{k+2}\prec' \beta_{k+1}\prec' \beta_{k}$. 
Also there exists a reduced expression ${\bf i}' \in R(w_0)$ obtained from ${\bf i}$ by a 2-term braid move $(i_k,i_{k+1})\rightarrow (i_{k+1},i_{k})$ if and only if 
$\beta_{k}$ and $\beta_{k+1}$ are of type $A_1 \times A_1$, where the associated convex ordering $\prec'$ is given by replacing $\beta_k\prec \beta_{k+1}$ with 
$\beta_{k+1}\prec'\beta_{k}$.

\begin{df} \label{df:simply braided}
{\em 
(\cite[Definition 4.1]{SST})
Let $i \in \I$ be given. Then ${\bf i} \in R(w_0)$ is {\em simply braided for $i$} if one can obtain ${\bf i}'=(i'_1,\ldots ,i'_N) \in R(w_0)$ with $i'_1=i$ by applying to ${\bf i}$ a sequence of braid moves consisting of either a 2-term move or 3-term braid move such that
$(\gamma,\gamma',\gamma'')\rightarrow (\gamma'',\gamma',\gamma)$ with $\gamma''=\alpha_i$.
If ${\bf i}$ is simply braided for all $i \in \I$, then we say that ${\bf i}$ is {\em simply braided}.
}
\end{df}

\subsection{Reduced expressions of the longest element of types ADE}
Let us briefly review some facts related to the (extended) affine Weyl groups (see \cite{Bo, Hum, K} for more details).

Let $\ag$ be the affine Kac-Moody algebra of symmetric affine type with an index set $\aI = \{ 0, 1, \dots, n \}$, where $\g$ is the underlying finite-dimensional simple Lie algebra.
We denote by $\aweyl$ the affine Weyl group of $\ag$, that is, the group generated by simple reflections $s_i$ $(i \in \aI)$.
It is well-known that $\aweyl$ is isomorphic to the semidirect product $\weyl \ltimes Q$ under the identification $s_0 \leftrightarrow (s_{\theta}, \theta)$ and $s_i \leftrightarrow (s_i, 0)$.

The extended affine Weyl group $\eweyl$ is given by the semidirect product $\weyl \ltimes P$.
Let $\mathcal{T}$ be the set of bijections $\tau : \aI \rightarrow \aI$ such that $a_{\tau(i)\tau(j)} = a_{ij}$ for all $i, j \in \aI$. 
Each element $\tau \in \mathcal{T}$ induces a unique automorphism $\psi_{\tau}$ of $\aweyl$ so that $\psi_{\tau}(s_i) = s_{\tau(i)}$ for $i \in \aI$.
Then it is known that  $\aweyl$ is a normal subgroup of $\eweyl$ such that $\mathcal{T} \simeq \eweyl \big/ \aweyl$
and
$\eweyl \simeq \mathcal{T} \ltimes \aweyl$, where the action of $\tau \in \mathcal{T}$ in $\eweyl$ is given by $\psi_{\tau}$.
A reduced expression of $w \in \eweyl$ is defined by $w = \tau s_{i_1} \cdots s_{i_m}$, where $\tau \in \mathcal{T}$ and $s_{i_1} \cdots s_{i_m}$ is a reduced expression of $\tau^{-1}w \in \aweyl$.

Let us fix $r \in \I$ and let $t_{\varpi_r}$ be the translation by the $r$-th fundamental weight $\varpi_r$ of $\g$ \cite{K}. 
One can take a diagram automorphism $\tau_r \in \mathcal{T}$ so that $\tau_r^{-1} t_{\varpi_r} \in \aweyl$. 
Let $\wJ$ be the minimal length coset representative associated to $\J = \I \setminus \{ r \}$.
Then it is well-known (e.g. see \cite{Bo}) that $t_{-\varpi_r} \tau_r = \wJ$.

Now, we assume that $\varpi_r$ is minuscule, that is, the weyl group $W$ acts transitively on the weights of $V(\varpi_r)$.
Note that the complete list of $r \in I$ such that $\varpi_r$ is minuscule is as follows (e.g. see \cite{Bo, Bo2}):
\begin{figure*}[h]
\begin{equation*} 
\begin{split}
  & \dynkin[edge length=1cm, o/.append style={ﬁll=black}, */.append style={ﬁll=white}, labels*={\alpha_1, \alpha_2, \alpha_{n-1}, \alpha_n}] A{oo.oo}, 
  \qquad \quad
  	\dynkin[edge length=0.9cm, o/.append style={ﬁll=black}, */.append style={ﬁll=white}, labels*={\alpha_1, \alpha_2, \alpha_3, \alpha_{n-2}, \alpha_{n-1}, \alpha_n}] D{o**.*oo}, 
  \\
  & \,\,\, \qquad \qquad\,\,\, \text{\scriptsize Type $\text{A}_n$ \,\,\,} \qquad \qquad \qquad \qquad \quad \quad \text{\scriptsize Type $\text{D}_n$ \,\,\,\,\,} \\
  & \dynkin[edge length=0.9cm, o/.append style={ﬁll=black}, */.append style={ﬁll=white}, labels*={\alpha_1, \alpha_2, \alpha_3, \alpha_4, \alpha_5, \alpha_6}] E{o****o},\qquad
  \dynkin[edge length=0.9cm, o/.append style={ﬁll=black}, */.append style={ﬁll=white}, labels*={\alpha_1, \alpha_2, \alpha_3, \alpha_4, \alpha_5, \alpha_6, \alpha_7}] E{******o}\,, \\
  & \,\,\, \qquad \qquad\,\,\, \text{\scriptsize Type $\text{E}_6$ \,\,\,} \qquad \qquad \qquad \qquad \quad \quad \text{\scriptsize Type $\text{E}_7$ \,\,\,\,\,}
\end{split}
\end{equation*}
\caption{The Dynkin diagrams for types $\text{A}_n$, $\text{D}_n$, $\text{E}_6$ and $\text{E}_7$ with minuscule nodes marked as $\bullet$.}
\label{fig:dynkins}
\end{figure*}

The following reduced expressions of $\wJ$ for types $\text{ADE}$ can be essentially found in \cite[Sections 5, 7 and 8]{Li98}, respectively, where the enumeration of simple roots for types $\text{D}$ and $\text{E}$ in \cite{Li98} are slightly different from the setting in this paper. We remark that the reduced expression in type $\text{A}$ is slightly generalized form of the one in \cite[Section 5]{Li98}.

\begin{prop} \label{prop:i^J}
{\em 
The reduced expression $\biJ$ of $\wJ$ given as follows.
\begin{itemize}
  \item[(1)] for type $\text{A}_n$, we have
    \begin{equation*}
        \biJ = {\bf i}_1 \cdot \, {\bf i}_2 \cdot \, \cdots\, \cdot\, {\bf i}_{n-r+1},
    \end{equation*} where ${\bf i}_s = (r+s-1, \,r+s-2, \dots, \,s+1, \,s)$ for $1 \le s \le n-r+1$. 
  \vskip 1mm
  
  \item[(2)] for type $\text{D}_n$, we have
    \begin{equation*}
      \begin{cases}
        \biJ = (1, 2, \cdots, n-1,\, n,\, n-2,\, n-3, \cdots, 2, 1) & \text{if $r = 1$}, \\
        \biJ = {\bf i}_1 \cdot \, {\bf i}_2 \, \cdots \, {\bf i}_{n-1} & \text{if $r = n-1$ or $n$},
      \end{cases}
    \end{equation*}
    where ${\bf i}_s$ $(1 \le s \le n-1)$ is given by
    \begin{equation*}
      {\bf i}_s = 
      \begin{cases}
        (r,n-2,\ldots,k+1,k), & \text{if $s$ is odd},\\
        (r-1,n-2,\ldots,k+1,k), & \text{if $s$ is even and $r = n$},\\
        (r+1,n-2,\ldots,k+1,k), & \text{if $s$ is even and $r = n-1$},\\
        (r), & \text{if $n$ is even and $s=n-1$}.
      \end{cases}
    \end{equation*}
  \vskip 1mm

  \item[(3)] for type $\text{E}_6$, we have
    \begin{equation*}
        \biJ = 
          \begin{cases}
            (1, 3, 4, 5, 6, \,\, 2, 4, 5, \,\, 3, 4, 2, \,\, 1, 3, 4, 5, 6) & \text{if $r = 1$}, \\
            (6, 5, 4, 3, 1, \,\, 2, 4, 3, \,\, 5, 4, 2, \,\, 6, 5, 4, 3, 1) & \text{if $r = 6$}.
          \end{cases}
    \end{equation*}
  \vskip 1mm

  \item[(4)] for type $\text{E}_7$, we have
    \begin{equation*}
      \biJ = (7, 6, 5, 4, 3, 1, \,\, 2, 4, 3, \,\, 5, 4, 2, \,\, 6, 5, 4, 3, 1, \,\, 7, 6, 5, 4, 3, \,\, 2, 4, 5, 6, 7). 
    \end{equation*}
\end{itemize}
}
\qed
\end{prop}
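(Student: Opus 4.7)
The strategy is to use the characterization of $\wJ$ as the unique element of $W$ of length $|\Phi^+\setminus\Phi^+_\J|$ satisfying $\wJ^{-1}(\alpha_j)\in\Phi^+$ for all $j\in\J$, i.e.\ the longest element of ${}^\J W$. From the identity $t_{-\varpi_r}\tau_r=\wJ$ together with the minuscule hypothesis on $\varpi_r$, one has $\ell(\wJ)=\ell(t_{-\varpi_r})=\sum_{\alpha\in\Phi^+}\langle\varpi_r,\alpha^\vee\rangle=|\Phi^+\setminus\Phi^+_\J|$, the number of positive roots containing $\alpha_r$. Moreover, since $\wJ^{-1}$ sends the positive simple roots of $\Phi_\J$ to positive roots, its inversion set $\operatorname{inv}(\wJ^{-1})$ is precisely $\Phi^+\setminus\Phi^+_\J$. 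So a reduced expression $(i_1,\ldots,i_m)$ of $\wJ$ is characterized by the condition that the roots
\[
\beta_k:=s_{i_1}\cdots s_{i_{k-1}}(\alpha_{i_k}),\qquad 1\le k\le m,
\]
constitute precisely this set of $m$ positive roots containing $\alpha_r$.

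The plan is thus, for each case in the statement, to verify: (a) the length $m$ equals $|\Phi^+\setminus\Phi^+_\J|$, which reads off directly from the block structure, yielding $r(n-r+1)$ for $A_n$, $2n-2$ for $D_n$ with $r=1$, $\binom{n}{2}$ for $D_n$ with $r=n-1,n$, $16$ for $E_6$ with $r=1,6$, and $27$ for $E_7$ with $r=7$; and (b) the roots $\beta_1,\ldots,\beta_m$ are pairwise distinct positive roots containing $\alpha_r$. Part (b) is the heart of the proof and can be carried out block by block. In type $A_n$, the $s$-th block ${\bf i}_s=(r+s-1,r+s-2,\ldots,s)$ produces, after conjugation by the preceding blocks, the roots $\varepsilon_s-\varepsilon_{r+s},\varepsilon_{s-1}-\varepsilon_{r+s},\ldots,\varepsilon_1-\varepsilon_{r+s}$, so as $s$ ranges from $1$ to $n-r+1$ one obtains each positive root $\varepsilon_i-\varepsilon_j$ with $i\le r<j$ exactly once, exhausting $\Phi^+\setminus\Phi^+_\J$. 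In type $D_n$ with $r=1$, a direct computation shows that $(1,2,\ldots,n-1,n,n-2,\ldots,1)$ produces the roots $\varepsilon_1-\varepsilon_2,\ldots,\varepsilon_1-\varepsilon_n,\varepsilon_1+\varepsilon_{n-1},\ldots,\varepsilon_1+\varepsilon_2$, which are exactly the positive roots containing $\alpha_1$; the spinor cases $r=n-1,n$ in $D_n$ are handled by the analogous signed-subset parametrization of $\Phi^+\setminus\Phi^+_\J$.

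The main obstacle is part (b) for types $E_6$ and $E_7$, where one must exhibit $16$ and $27$ positive roots respectively. The cleanest organization uses the block decomposition: each block sweeps out a ``layer'' of the cominuscule heap rooted at node $r$, and the roots it produces after conjugation are computed by a fixed sequence of simple-root additions read off from the $E$-type Cartan matrix. For $E_6$ with $r=1$ the four blocks $(1,3,4,5,6)$, $(2,4,5)$, $(3,4,2)$, $(1,3,4,5,6)$ yield $5+3+3+5=16$ roots, which one verifies are pairwise distinct and exhaust the positive roots with $\alpha_1$-coefficient $1$; the case $r=6$ follows from the diagram automorphism exchanging nodes $1$ and $6$; and $E_7$ with $r=7$ is analogous, with six blocks yielding $27$ roots. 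Since each $\beta_k$ is obtained by a short sequence of simple-root additions, the verification is finite and routine once organized this way.
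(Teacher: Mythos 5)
Your proposal is correct in substance but takes a genuinely different route from the paper. The paper's proof is a one-line appeal to the explicit formula for the translation $t_{\varpi_r}$ in the extended affine Weyl group (from Kac) together with the identity \eqref{eq:twr = wJ}, so the verification is carried out on the affine side. You instead characterize $\wJ$ intrinsically as the longest element of ${}^{\J}W$, observe that the inversion set of $(\wJ)^{-1}$ is exactly $\Phi^+\setminus\Phi^+_{\J}$, and then check that the proposed word has the correct length and that its associated roots $\beta_k$ are pairwise distinct positive roots containing $\alpha_r$. This is a legitimate and arguably more transparent verification: it stays entirely inside the finite root system, and as a by-product it produces the explicit lists of roots of the nilradical that the paper needs anyway in \eqref{eq:arrangement of positive roots of nilradical in E6} and \eqref{eq:arrangement of positive roots of nilradical in E7}. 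Your length counts ($r(n-r+1)$, $2n-2$, $\binom{n}{2}$, $16$, $27$) are all correct, and the reduction of the $r=6$ case of $\text{E}_6$ to $r=1$ via the diagram automorphism is valid. One small slip: in type $\text{A}_n$ the $s$-th block ${\bf i}_s=(r+s-1,\ldots,s)$ has length $r$ and contributes the $r$ roots $\varepsilon_i-\varepsilon_{r+s}$ for $i=r,r-1,\ldots,1$, not the list $\varepsilon_s-\varepsilon_{r+s},\ldots,\varepsilon_1-\varepsilon_{r+s}$ that you wrote (which has the wrong cardinality when $s\neq r$); with this correction the blocks exhaust $\{\varepsilon_i-\varepsilon_j \mid i\le r<j\}$ exactly as you intend. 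The remaining $\text{E}_6$ and $\text{E}_7$ checks are finite computations of essentially the same size as what the paper's proof leaves to the reader.
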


Let us take $w_0 = \wJ w_{\text{J}^{\star}}$ such that $\wJ w_{\text{J}^{\star}} = w_{\text{J}} \wJ$, where $w_{\text{J}}$ is the longest element of the subgroup $\weyl_{\text{J}} \subset \weyl$ generated by $s_j$ for $j \in \J$.
Then we define the reduced expression ${\bf i}_0 \in R(w_0)$ by
\begin{equation} \label{eq:i_0}
  {\bf i}_0 = \biJ \cdot {\bf i}_{\text{J}^{\star}},
\end{equation}
where $\biJ$ is given in Proposition \ref{prop:i^J} and ${\bf i}_{\text{J}^{\star}}$ is a reduced expression of $w_{\text{J}^{\star}}$. 
In particular, if $\g$ is of types $\text{E}_6$ or $\text{E}_7$, then we may choose the reduced expression ${\bf i}_{\text{J}^{\star}}$ by 
\begin{equation} \label{eq:i_J}
{\footnotesize 
  {\bf i}_{\text{J}^{\star}} = 
  \begin{cases}
    (5, 4, 3, 1, \,\, 2, 4, 3, \,\, 5, 4, \,\, 2, \,\, 5, 4, 3, 1, \,\, 5, 4, 3, \,\, 5, 4, \,\, 5) & \text{for type $\text{E}_6$ with $r=1$}, \\
    (3, 4, 5, 6, \,\, 2, 4, 5, \,\, 3, 4, \,\, 2, \,\, 3, 4, 5, 6, \,\, 3, 4, 5, \,\, 3, 4, \,\, 3) & \text{for type $\text{E}_6$ with $r=6$}, \\
    {\bf i}_0^{\text{E}_6} & \text{for type $\text{E}_7$ with $r = 7$},
  \end{cases}
 }
\end{equation}
where ${\bf i}_0^{\text{E}_6}$ is equal to the reduced expression \eqref{eq:i_0} of type $\text{E}_6$ with $r = 1$ or $6$. 
Note that the expression ${\bf i}_{\text{J}^{\star}}$ for type $\text{E}_6$ is obtained from \eqref{eq:i_0} of type $\text{D}_5$ associated to $\J$ (cf. \cite[Section 3.1]{JK19}).

The following property of ${\bf i}_0$ is a consequence of \cite{SST, Ste}. For completeness, let us give an elementary proof of this property for type $\text{E}$.
\begin{prop} \label{prop:simply braided}
{\em 
For $i \in \I \setminus \{ r \}$, there exists a reduced expression ${\bf i} \in R(w_0)$, which is equal to ${\bf i}_0$ up to $2$-term braid moves, such that ${\bf i}$ is simply braided for $i$.
}
\end{prop}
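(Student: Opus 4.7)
The approach is a direct construction using the explicit reduced expressions from Proposition \ref{prop:i^J} and equations \eqref{eq:i_0}--\eqref{eq:i_J}. Fix $i\in \I\setminus\{r\}$. Since the positive roots $\beta_1,\dots,\beta_N$ attached to ${\bf i}_0$ via \eqref{eq:beta_k} form a permutation of $\Phi^+$, there is a unique index $k_i$ with $\beta_{k_i}=\alpha_i$. Unwinding Definition \ref{df:simply braided}, it suffices to exhibit a sequence of $2$-term braid moves and restricted $3$-term braid moves (of the form $(\gamma,\gamma',\alpha_i)\to(\alpha_i,\gamma',\gamma)$) applied to ${\bf i}_0$ that brings $\alpha_i$ to position $1$ of the convex order \eqref{eq:convex}; the intermediate expression ${\bf i}$ demanded by the statement can then be taken to be any state reached by $2$-term moves only. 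Each $2$-term move shifts $\alpha_i$ one step left, and each restricted $3$-term move shifts it two steps left.

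Minusculity of $\varpi_r$ implies that every $\gamma\in \Phi^+$ has $\alpha_r$-coefficient in $\{0,1\}$, and the inversion set of $w^{\J}$ consists exactly of those with coefficient one. Since $\alpha_i$ has coefficient zero, $k_i>\ell(w^{\J})$, so $\alpha_i$ lives inside the ${\bf i}_{\J^\star}$-portion of ${\bf i}_0=\biJ\cdot{\bf i}_{\J^\star}$. The plan therefore splits into two stages: first transport $\alpha_i$ up to position $\ell(w^{\J})+1$ through ${\bf i}_{\J^\star}$, and then propagate it further through $\biJ$ down to position $1$. For the first stage, the recursive choice \eqref{eq:i_J} of ${\bf i}_{\J^\star}$ makes it itself of the form ${\bf i}_0$ for a strictly smaller Levi (type $\text{D}_5$ inside $\text{E}_6$, type $\text{E}_6$ inside $\text{E}_7$), permitting an induction on rank with types A and D as base cases handled directly via Proposition \ref{prop:i^J}(1)--(2); moves valid inside the Levi remain valid in the ambient group because the $A_1\times A_1$ and $A_2$ conditions only involve the roots at the move positions.

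For the second stage, at each step let $\gamma$ denote the root immediately to the left of $\alpha_i$ in the current convex order. Convexity forces $(\gamma,\alpha_i)\ge 0$, for otherwise $\gamma+\alpha_i\in \Phi^+$ would have to lie strictly between $\gamma$ and $\alpha_i$; in the simply-laced setting this yields $(\gamma,\alpha_i)\in\{0,1\}$. If $(\gamma,\alpha_i)=0$, a $2$-term move shifts $\alpha_i$ one position leftward. If $(\gamma,\alpha_i)=1$, then $\gamma-\alpha_i\in \Phi^+$ appears strictly earlier in the convex order; one applies $2$-term commutations inside the current prefix (without disturbing the suffix containing $\alpha_i$) to bring $\gamma-\alpha_i$ immediately to the left of $\gamma$, forming the $A_2$-triple $(\gamma-\alpha_i,\gamma,\alpha_i)$, and then the restricted $3$-term move shifts $\alpha_i$ two positions leftward.

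The main obstacle is to verify, for each pair $(r,i)$ with $r$ in \eqref{eq:minusucle nodes}, that the relocation of $\gamma-\alpha_i$ required in the second stage is always realizable by $A_1\times A_1$ commutations within the current prefix, possibly after nested $2$-term rearrangements. For types A and D this is transparent from the columnar pattern of $\biJ$ in Proposition \ref{prop:i^J}(1)--(2). For types $\text{E}_6$ and $\text{E}_7$, one processes $\biJ$ in the blocks exhibited in Proposition \ref{prop:i^J}(3)--(4) from right to left: inside each block $\alpha_i$ meets at most one non-orthogonal root, which is resolved by a single restricted $3$-term move after the required $A_2$-triple is assembled by $2$-term moves. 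This finite case-by-case check, read off from the explicit lists of reduced expressions, is the bulk of the argument.
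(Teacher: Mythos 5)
Your proposal is correct and follows essentially the same route as the paper: write ${\bf i}_0=\biJ\cdot{\bf i}_{\J^{\star}}$, bring $\alpha_i$ to the front of the ${\bf i}_{\J^{\star}}$ factor via the lower-rank (type $\text{D}_5$, resp.\ $\text{E}_6$) result, and then push $\alpha_i$ leftward through $\biJ$. Your convexity observation (the left neighbour of $\alpha_i$ is either orthogonal to it or completes an $A_2$-triple with it) is a nice supplement not made explicit in the paper, but the decisive point --- that $\gamma-\alpha_i$ can always be brought adjacent by $2$-term commutations without disturbing the rest of the word --- is exactly what the paper settles by exhibiting the rearranged words $\overline{\bf i}^{\J,k}$ in \eqref{eq:ov iJ for E6} and \eqref{eq:ov iJ for E7}, whereas your proposal leaves that same finite verification as a declared to-do rather than carrying it out.
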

\begin{proof}
We state the proof for type $\text{E}_n$, where the similar proof for other cases is given in \cite{JK19}.
\vskip 2mm

{\it Case 1.} Type $\text{E}_6$. Let us consider the case of $r = 6$. Note that the proof for the case of $r = 1$ is almost identical. Since ${\bf i}_{\J^{\star}}$ is the reduced expression of the longest element for type $\text{D}_5$ associated to $\{ \, 2, 3, 4, 5, 6 \, \}$ (cf. \eqref{eq:dynkin of l}) by ignoring ${\bf i}^{\J}$, we know that it is simply braided for $k \neq 1$ \cite[Proposition 3.2]{JK19}, that is, there exists ${\bf i}_{\J^{\star}}'$ such that $k \cdot {\bf i}_{\J^{\star}}'$ is obtained from ${\bf i}_{\J^{\star}}$ by $2$-term and $3$-term braid moves as in Definition \ref{df:simply braided}.
Then it is enough to check that ${\bf i}^{\J} \cdot k$ is simply braided for $k^{\star}$ up to $2$-term braid moves, where $k^{\star}$ is given by
\begin{center}
\begin{tabular}{|c||c|c|c|c|c|}
  \hline
  $k$ & $2$ & $3$ & $4$ & $5$ & $6$ \\
  \hline
  $k^*$ & $5$ & $2$ & $4$ & $3$ & $1$ \\
  \hline
\end{tabular}
\end{center}

Let us consider
\begin{equation} \label{eq:ov iJ for E6}
	\overline{\bf i}^{\J, k} = 
	\begin{cases}
		(6, 5, 4, 3, 1, \,\, 2, 4, 3, \,\, 5, {\bf 6}, {\bf 4}, \,\, {\bf 5}, {\bf 2}, 4, 3, 1)  & \text{if $k = 2$,} \\
		(6, 5, 4, {\bf 2}, {\bf 3}, \,\, {\bf 4}, {\bf 1}, 3, \,\, 5, 4, 2, \,\, 6, 5, 4, 3, 1)  & \text{if $k = 3$,} \\
		(6, 5, 4, 3, 1, \,\, 2, 4, {\bf 5}, \,\, {\bf 3}, 4, 2, \,\, 6, 5, 4, 3, 1)  & \text{if $k = 4$,} \\
		(6, 5, 4, 3, 1, \,\, 2, 4, 3, \,\, 5, 4, 2, \,\, 6, 5, 4, 3, 1)  & \text{if $k = 5, 6$.} \\
	\end{cases}
\end{equation}

One can check that $\overline{\bf i}^{\J, k}$ is equal to ${\bf i}^{\J}$ up to $2$-term braid moves which occur on bold numbers. Furthermore, it is straightforward to check that $\overline{\bf i}^{\J, k} \cdot k$ is simply braided for $k^{\star}$. Hence ${\bf i} := \overline{\bf i}^{\J, k} \cdot {\bf i}_{\J}$ is simply braided for $k^{\star}$ and equal to ${\bf i}_0$ up to $2$-term braid moves.
\vskip 1mm

{\it Case 2.} Type $\text{E}_7$.
In this case, ${\bf i}_{\J^{\star}}$ is the reduced expression of the longest element for type $\text{E}_6$ associated to $\{ \, 1, 2, 3, 4, 5, 6 \, \}$ (cf. \eqref{eq:dynkin of l}) by ignoring ${\bf i}^{\J}$, and it is simply braided for $k \neq 7$ since we take ${\bf i}_{\J^{\star}}$ is equal to ${\bf i}_0$ for type $\text{E}_6$ \eqref{eq:i_J}. Then, we apply the similar argument as in Case 1 by using
\begin{equation} \label{eq:ov iJ for E7}
\scalebox{0.85}{$
	\overline{\bf i}^{\J, k} = 
	\begin{cases}
		(7, 6, 5, 4, 3, 1, \,\, 2, 4, 3, \,\, 5, 4, 2, \,\, {\bf 6}, {\bf 7}, {\bf 5}, {\bf 6}, {\bf 4}, \,\, {\bf 5}, {\bf 3}, {\bf 4}, {\bf 1}, {\bf 3}, \,\, 2, 4, 5, 6, 7) & \text{if $k = 1$,} \\
		(7, 6, 5, 4, {\bf 2}, {\bf 3}, \,\, {\bf 4}, {\bf 1}, 3, \,\, 5, 4, 2, \,\, 6, 5, 4, 3, 1, \,\, 7, 6, 5, 4, 3, \,\, 2, 4, 5, 6, 7) & \text{if $k = 2$,} \\
		(7, 6, 5, 4, 3, 1, \,\, 2, 4, 3, \,\, 5, {\bf 6}, {\bf 4}, \,\, {\bf 5}, {\bf 2}, 4, 3, 1, \,\, 7, 6, 5, 4, {\bf 2}, \,\, {\bf 3}, 4, 5, 6, 7) & \text{if $k = 3$,} \\
		(7, 6, 5, 4, 3, 1, \,\, 2, 4, {\bf 5}, \,\, {\bf 3}, 4, 2, \,\, 6, 5, 4, 3, 1, \,\, 7, 6, 5, 4, 3, \,\, 2, 4, 5, 6, 7) & \text{if $k = 4$,} \\
		(7, 6, 5, 4, 3, 1, \,\, 2, 4, 3, \,\, 5, 4, 2, \,\, 6, 5, 4, 3, 1, \,\, 7, 6, 5, 4, 3, \,\, 2, 4, 5, 6, 7) & \text{if $k = 5, 6$.} 
	\end{cases}
	$}
\end{equation}
Here $k^{\star}$ is given by
\begin{center}
\begin{tabular}{|c||c|c|c|c|c|c|}
  \hline
  $k$ & $1$ & $2$ & $3$ & $4$ & $5$ & $6$ \\
  \hline
  $k^*$ & $6$ & $2$ & $5$ & $4$ & $3$ & $1$ \\
  \hline
\end{tabular}
\end{center}
Then ${\bf i} := \overline{\bf i}^{\J, k} \cdot {\bf i}_{\J^{\star}}$ is simply braided for $k^{\star}$ and equal to ${\bf i}_0$ up to $2$-term braid moves. 
\end{proof}

\begin{rem}
{\em 
\mbox{} \

\begin{itemize}
	\item[(1)] The reduced expression $\biJ$ is unique up to $2$-term braid moves in the sense of \cite[Remark 5.2]{JK20} (see also \cite{Li98, Ste}).
	\vskip 1mm
	
	\item[(2)] One can check that the reduced expression ${\bf i}_0$ of type $\text{A}_n$ is adapted to Dynkin quiver of type $\text{A}_n$ with a single sink (cf. \cite{Kw18}), while the one of the other types does not satisfy the adaptedness.
\end{itemize}
}
\end{rem}

\subsection{Positive roots and convex order} \label{subsec:convex orders}
From now on, we assume that ${\mf g}$ is of type $\text{E}_6$ or $\text{E}_7$, where the Dynkin diagram is given in Figure \ref{fig:dynkins}.
In this subsection, we consider the positive roots and the convex order determined by ${\bf i}_0$ in the sense of \eqref{eq:convex}.  

\subsubsection{{\bf Type} $\text{\bf E}_{\bf 6}$} \label{subsubsec:E6}
In this case, we only consider the case of $r = 6$ since the case of $r = 1$ is almost identical. 
For simplicity, we denote by $\substack{b \\ acdef}$ the positive root $a\alpha_1 + b\alpha_2 + c\alpha_3 + d\alpha_4 + e\alpha_5 + f\alpha_6 \in \Phi^+$.
Then the convex order associated to ${\bf i}_0$ is given as follows.
\begin{equation} \label{eq:positive roots of E6}
\begin{split}
  %
  %% nilradical part
  %
  & \quad\,\, \substack{0 \\ 00001} \prec \substack{0 \\ 00011} \prec \substack{0 \\ 00111} \prec \substack{0 \\ 01111} \prec \substack{0 \\ 11111} \prec \substack{1 \\ 00111} \prec \substack{1 \\ 01111} \prec \substack{1 \\ 11111}  \\
  & \prec \substack{1 \\ 01211} \prec \substack{1 \\ 11211} \prec \substack{1 \\ 12211} \prec \substack{1 \\ 01221} \prec \substack{1 \\ 11221} \prec \substack{1 \\ 12221} \prec \substack{1 \\ 12321} \prec \substack{2 \\ 12321} \\
  %
  %% Levi part
  %
  & \prec \substack{1 \\ 00000} \prec \substack{1 \\ 00100} \prec \substack{1 \\ 01100} \prec \substack{1 \\ 11100} \prec \substack{1 \\ 00110} \prec \substack{1 \\ 01110} \prec \substack{1 \\ 11110} \prec \substack{1 \\ 01210} \prec \substack{1 \\ 11210} \prec \substack{1 \\ 12210} \\
 & \prec \substack{0 \\ 10000} \prec \substack{0 \\ 11000} \prec \substack{0 \\ 11100} \prec \substack{0 \\ 11110} \prec \substack{0 \\ 01000} \prec \substack{0 \\ 01100} \prec \substack{0 \\ 01110} \prec \substack{0 \\ 00100} \prec \substack{0 \\ 00110} \prec \substack{0 \\ 00010}
\end{split}
\end{equation} \\
where $\substack{2 \\ 12321}$ is the maximal root of $\g$.
Then let us introduce an arrangement of dots in the plane to present the positive roots following the above convex order. 
More precisely, we associate the $k$-th dot in the arrangement in \eqref{eq:arrangement of E6} with the $k$-th positive root in \eqref{eq:positive roots of E6} along the convex order.
Then we often identify the arrangement in \eqref{eq:arrangement of E6} with the one of the above positive roots.

\begin{equation} \label{eq:arrangement of E6}
\scalebox{0.9}{
\begin{tikzpicture}[baseline=(current  bounding  box.center)]
  %% arrows
  %% nilradical part
  \draw[-] (0,0) -- (0.5,0); % 1st floor & 5
  \draw[-] (0.7,0) -- (1.2,0); % 1st floor & 4
  \draw[-] (1.4,0) -- (1.9,0); % 1st floor & 3
  \draw[-] (2.1,0) -- (2.6,0); % 1st floor & 1

  \draw[-] (1.3,-0.1) -- (1.3,-0.5); % 2nd floor & 2
  \draw[-] (2,-0.1) -- (2,-0.5); % 2nd floor & 2
  \draw[-] (2.7,-0.1) -- (2.7,-0.5); % 2nd floor & 2

  \draw[-] (1.4,-0.6) -- (1.9,-0.6); % 3rd floor & 3
  \draw[-] (2.1,-0.6) -- (2.6,-0.6); % 3rd floor & 1

  \draw[-] (2, -0.7) -- (2, -1.2); % 4th floor & 4
  \draw[-] (2.7, -0.7) -- (2.7, -1.2); % 4th floor & 4

  \draw[-] (2.1, -1.3) -- (2.6, -1.3); % 5th floor & 1
  \draw[-] (2.8, -1.3) -- (3.3, -1.3); % 5th floor & 3

  \draw[-] (2, -1.4)-- (2, -1.9);  % 6th floor & 5
  \draw[-] (2.7, -1.4)-- (2.7, -1.9);  % 6th floor & 5
  \draw[-] (3.4, -1.4)-- (3.4, -1.9);  % 6th floor & 5

  \draw[-] (2.1, -2) -- (2.6, -2); % 7th floor & 1
  \draw[-] (2.8, -2) -- (3.3, -2); % 7th floor & 3
  \draw[-] (3.5, -2) -- (4, -2); % 7th floor & 4
  \draw[-] (4.2, -2) -- (4.7, -2); % 7th floor & 2
  
  %% Levi part (of type D5)
  \draw[-] (3.4, -2.8) -- (3.4, -3.3);
  \draw[-] (4.1, -2.8) -- (4.1, -3.3);
  \draw[-] (4.8, -2.8) -- (4.8, -3.3);
  \draw[-] (2.8, -2.7) -- (3.3, -2.7);
  \draw[-] (3.5, -2.7) -- (4, -2.7);
  \draw[-] (4.2, -2.7) -- (4.7, -2.7);
  
  \draw[-] (4.1, -3.5) -- (4.1, -4);
  \draw[-] (4.8, -3.5) -- (4.8, -4);
  \draw[-] (3.5, -3.4) -- (4, -3.4);
  \draw[-] (4.2, -3.4) -- (4.7, -3.4);

  \draw[-] (4.8, -4.2) -- (4.8, -4.7);
  \draw[-] (4.2, -4.1) -- (4.7, -4.1);

  \draw[-] (5.5, -2.8) -- (5.5, -3.3);
  \draw[-] (5.6, -3.4) -- (6.1, -3.4); 
  
  \draw[-] (5.5, -3.5) -- (5.5, -4);
  \draw[-] (6.2, -3.5) -- (6.2, -4);
  \draw[-] (5.6, -4.1) -- (6.1, -4.1); 
  \draw[-] (6.3, -4.1) -- (6.8, -4.1); 
  
  \draw[-] (5.5, -4.2) -- (5.5, -4.7);
  \draw[-] (6.2, -4.2) -- (6.2, -4.7); 
  \draw[-] (6.9, -4.2) -- (6.9, -4.7); 
  \draw[-] (5.6, -4.8) -- (6.1, -4.8); 
  \draw[-] (6.3, -4.8) -- (6.8, -4.8); 
  \draw[-] (7, -4.8) -- (7.5, -4.8); 
    
  %% nodes
  %% nilradical part
  \node at (-0.1,0) {${}_{\circ}$}; % 1st floor & 1
  \node at (0.6,0) {${}_{\circ}$}; % 1st floor & 2
  \node at (1.3,0) {${}_{\circ}$}; % 1st floor & 3
  \node at (2,0) {${}_{\circ}$}; % 1st floor & 4
  \node at (2.7,0) {${}_{\circ}$}; % 1st floor & 5

  \node at (1.3, -0.6) {${}_{\circ}$}; % 2nd floor & 6
  \node at (2, -0.6) {${}_{\circ}$}; % 2nd floor & 7
  \node at (2.7, -0.6) {${}_{\circ}$}; % 2nd floor & 8

  \node at (2, -1.3) {${}_{\circ}$}; % 3rd floor & 9
  \node at (2.7, -1.3) {${}_{\circ}$}; % 3rd floor & 10
  \node at (3.4, -1.3) {${}_{\circ}$}; % 3rd floor & 11

  \node at (2, -2) {${}_{\circ}$}; % 4th floor & 12
  \node at (2.7, -2) {${}_{\circ}$}; % 4th floor & 13
  \node at (3.4, -2) {${}_{\circ}$}; % 4th floor & 14
  \node at (4.1, -2) {${}_{\circ}$}; % 4th floor & 15
  \node at (4.8, -2) {${}_{\circ}$}; % 4th floor & 16
  
  %% Levi part (of type D5) 
  \node at (2.7, -2.7) {${}_{\circ}$}; % 5th floor & 17
  \node at (3.4, -2.7) {${}_{\circ}$}; % 5th floor & 18
  \node at (4.1, -2.7) {${}_{\circ}$}; % 5th floor & 19
  \node at (4.8, -2.7) {${}_{\circ}$}; % 5th floor & 20
  
  \node at (3.4, -3.4) {${}_{\circ}$}; % 6th floor & 21
  \node at (4.1, -3.4) {${}_{\circ}$}; % 6th floor & 22
  \node at (4.8, -3.4) {${}_{\circ}$}; % 6th floor & 23
  
  \node at (4.1, -4.1) {${}_{\circ}$}; % 7th floor & 24
  \node at (4.8, -4.1) {${}_{\circ}$}; % 7th floor & 25
  
  \node at (4.8, -4.8) {${}_{\circ}$}; % 8th floor & 26
  \node at (5.5, -2.7) {${}_{\circ}$}; % 5th floor & 27
  
  \node at (5.5, -3.4) {${}_{\circ}$}; % 6th floor & 28
  \node at (6.2, -3.4) {${}_{\circ}$}; % 6th floor & 29
  
  \node at (5.5, -4.1) {${}_{\circ}$}; % 7th floor & 30
  \node at (6.2, -4.1) {${}_{\circ}$}; % 7th floor & 31
  \node at (6.9, -4.1) {${}_{\circ}$}; % 7th floor & 32

  \node at (5.5, -4.8) {${}_{\circ}$}; % 8th floor & 33
  \node at (6.2, -4.8) {${}_{\circ}$}; % 8th floor & 34
  \node at (6.9, -4.8) {${}_{\circ}$}; % 8th floor & 35
  \node at (7.6, -4.8) {${}_{\circ}$}; % 8th floor & 36
\end{tikzpicture}
\hspace{-3.5cm}
\begin{tikzpicture}[baseline=(current  bounding  box.center)]
  %% arrows
  %% nilradical part
  \draw[-] (0,0) -- (0.5,0); % 1st floor & 5
  \draw[-] (0.7,0) -- (1.2,0); % 1st floor & 4
  \draw[-] (1.4,0) -- (1.9,0); % 1st floor & 3
  \draw[-] (2.1,0) -- (2.6,0); % 1st floor & 1
  
  \draw[-] (1.3,-0.1) -- (1.3,-0.5); % 2nd floor & 2
  \draw[-] (2,-0.1) -- (2,-0.5); % 2nd floor & 2
  \draw[-] (2.7,-0.1) -- (2.7,-0.5); % 2nd floor & 2
  
  \draw[-] (1.4,-0.6) -- (1.9,-0.6); % 3rd floor & 3
  \draw[-] (2.1,-0.6) -- (2.6,-0.6); % 3rd floor & 1
  
  \draw[-] (2, -0.7) -- (2, -1.2); % 4th floor & 4
  \draw[-] (2.7, -0.7) -- (2.7, -1.2); % 4th floor & 4
  
  \draw[-] (2.1, -1.3) -- (2.6, -1.3); % 5th floor & 1
  \draw[-] (2.8, -1.3) -- (3.3, -1.3); % 5th floor & 3
  
  \draw[-] (2, -1.4)-- (2, -1.9);  % 6th floor & 5
  \draw[-] (2.7, -1.4)-- (2.7, -1.9);  % 6th floor & 5
  \draw[-] (3.4, -1.4)-- (3.4, -1.9);  % 6th floor & 5
  
  \draw[-] (2.1, -2) -- (2.6, -2); % 7th floor & 1
  \draw[-] (2.8, -2) -- (3.3, -2); % 7th floor & 3
  \draw[-] (3.5, -2) -- (4, -2); % 7th floor & 4
  \draw[-] (4.2, -2) -- (4.7, -2); % 7th floor & 2
  
  %% Levi part (of type D5)
  \draw[-] (3.4, -2.8) -- (3.4, -3.3);
  \draw[-] (4.1, -2.8) -- (4.1, -3.3);
  \draw[-] (4.8, -2.8) -- (4.8, -3.3);
  \draw[-] (2.8, -2.7) -- (3.3, -2.7);
  \draw[-] (3.5, -2.7) -- (4, -2.7);
  \draw[-] (4.2, -2.7) -- (4.7, -2.7);

  \draw[-] (4.1, -3.5) -- (4.1, -4);
  \draw[-] (4.8, -3.5) -- (4.8, -4);
  \draw[-] (3.5, -3.4) -- (4, -3.4);
  \draw[-] (4.2, -3.4) -- (4.7, -3.4);

  \draw[-] (4.8, -4.2) -- (4.8, -4.7);
  \draw[-] (4.2, -4.1) -- (4.7, -4.1);

  \draw[-] (5.5, -2.8) -- (5.5, -3.3);
  \draw[-] (5.6, -3.4) -- (6.1, -3.4); 
  
  \draw[-] (5.5, -3.5) -- (5.5, -4);
  \draw[-] (6.2, -3.5) -- (6.2, -4);
  \draw[-] (5.6, -4.1) -- (6.1, -4.1); 
  \draw[-] (6.3, -4.1) -- (6.8, -4.1); 
  
  \draw[-] (5.5, -4.2) -- (5.5, -4.7);
  \draw[-] (6.2, -4.2) -- (6.2, -4.7); 
  \draw[-] (6.9, -4.2) -- (6.9, -4.7); 
  \draw[-] (5.6, -4.8) -- (6.1, -4.8); 
  \draw[-] (6.3, -4.8) -- (6.8, -4.8); 
  \draw[-] (7, -4.8) -- (7.5, -4.8); 
  
  %% nodes
  %% nilradical part
  \node at (-0.1,0) {${}_{\scalebox{0.5}{1}}$}; % 1st floor & 1
  \node at (0.6,0) {${}_{\scalebox{0.5}{2}}$}; % 1st floor & 2
  \node at (1.3,0) {${}_{\scalebox{0.5}{3}}$}; % 1st floor & 3
  \node at (2,0) {${}_{\scalebox{0.5}{4}}$}; % 1st floor & 4
  \node at (2.7,0) {${}_{\scalebox{0.5}{5}}$}; % 1st floor & 5
  
  \node at (1.3, -0.6) {${}_{\scalebox{0.5}{6}}$}; % 2nd floor & 6
  \node at (2, -0.6) {${}_{\scalebox{0.5}{7}}$}; % 2nd floor & 7
  \node at (2.7, -0.6) {${}_{\scalebox{0.5}{8}}$}; % 2nd floor & 8
  
  \node at (2, -1.3) {${}_{\scalebox{0.5}{9}}$}; % 3rd floor & 9
  \node at (2.7, -1.3) {${}_{\scalebox{0.5}{10}}$}; % 3rd floor & 10
  \node at (3.4, -1.3) {${}_{\scalebox{0.5}{11}}$}; % 3rd floor & 11
  
  \node at (2, -2) {${}_{\scalebox{0.5}{12}}$}; % 4rd floor & 12
  \node at (2.7, -2) {${}_{\scalebox{0.5}{13}}$}; % 4rd floor & 13
  \node at (3.4, -2) {${}_{\scalebox{0.5}{14}}$}; % 4rd floor & 14
  \node at (4.1, -2) {${}_{\scalebox{0.5}{15}}$}; % 4rd floor & 15
  \node at (4.8, -2) {${}_{\scalebox{0.5}{16}}$}; % 4rd floor & 16
  
  %% Levi part (of type D5) 
  \node at (2.7, -2.7) {${}_{\scalebox{0.5}{17}}$}; % 5th floor & 17
  \node at (3.4, -2.7) {${}_{\scalebox{0.5}{18}}$}; % 5th floor & 18
  \node at (4.1, -2.7) {${}_{\scalebox{0.5}{19}}$}; % 5th floor & 19
  \node at (4.8, -2.7) {${}_{\scalebox{0.5}{20}}$}; % 5th floor & 20
  
  \node at (3.4, -3.4) {${}_{\scalebox{0.5}{21}}$}; % 6th floor & 21
  \node at (4.1, -3.4) {${}_{\scalebox{0.5}{22}}$}; % 6th floor & 22
  \node at (4.8, -3.4) {${}_{\scalebox{0.5}{23}}$}; % 6th floor & 23
  
  \node at (4.1, -4.1) {${}_{\scalebox{0.5}{24}}$}; % 7th floor & 24
  \node at (4.8, -4.1) {${}_{\scalebox{0.5}{25}}$}; % 7th floor & 25
  
  \node at (4.8, -4.8) {${}_{\scalebox{0.5}{26}}$}; % 7th floor & 26
  \node at (5.5, -2.7) {${}_{\scalebox{0.5}{27}}$}; % 5th floor & 27
  
  \node at (5.5, -3.4) {${}_{\scalebox{0.5}{28}}$}; % 6th floor & 28
  \node at (6.2, -3.4) {${}_{\scalebox{0.5}{31}}$}; % 6th floor & 29
  
  \node at (5.5, -4.1) {${}_{\scalebox{0.5}{29}}$}; % 7th floor & 30
  \node at (6.2, -4.1) {${}_{\scalebox{0.5}{32}}$}; % 7th floor & 31
  \node at (6.9, -4.1) {${}_{\scalebox{0.5}{34}}$}; % 7th floor & 32

  \node at (5.5, -4.8) {${}_{\scalebox{0.5}{30}}$}; % 8th floor & 33
  \node at (6.2, -4.8) {${}_{\scalebox{0.5}{33}}$}; % 8th floor & 34
  \node at (6.9, -4.8) {${}_{\scalebox{0.5}{35}}$}; % 8th floor & 35
  \node at (7.6, -4.8) {${}_{\scalebox{0.5}{36}}$}; % 8th floor & 36
\end{tikzpicture}
}
\end{equation}

In particular, we will focus on the part consisting of the positive roots defined as in \eqref{eq:beta_k} with respect to $\biJ$, which is denoted by $\Asixb$ (see \eqref{eq:arrangement of positive roots of nilradical in E6}). 
For the case of $r = 1$, we also denote the arrangement by $\Asixa$ similarly.
Note that $\Asixa$ may be obtained from $\Asixb$ by replacing $6 \leftrightarrow 1$ and $5 \leftrightarrow 3$.

\begin{equation} \label{eq:arrangement of positive roots of nilradical in E6}
\begin{tikzpicture}[scale=1.25, baseline=(current  bounding  box.center)]
  %% arrows
  %% nilradical part
  \draw[-] (0.3,0) -- (0.8,0); % 1st floor & 5
  \draw[-] (1.6,0) -- (2.1,0); % 1st floor & 4
  \draw[-] (2.9,0) -- (3.4,0); % 1st floor & 3
  \draw[-] (4.2,0) -- (4.7,0); % 1st floor & 1
  
  \draw[-] (2.5,-0.25) -- (2.5,-0.75); % 2nd floor & 2
  \draw[-] (3.8,-0.25) -- (3.8,-0.75); % 2nd floor & 2
  \draw[-] (5.1,-0.25) -- (5.1,-0.75); % 2nd floor & 2
  
  \draw[-] (2.9,-1) -- (3.4,-1); % 3rd floor & 3
  \draw[-] (4.2,-1) -- (4.7,-1); % 3rd floor & 1
  
  \draw[-] (3.8, -1.25) -- (3.8, -1.75); % 4th floor & 4
  \draw[-] (5.1, -1.25) -- (5.1, -1.75); % 4th floor & 4
  
  \draw[-] (4.2, -2) -- (4.7, -2); % 5th floor & 1
  \draw[-] (5.5, -2) -- (6, -2); % 5th floor & 3
  
  \draw[-] (3.8, -2.25)-- (3.8, -2.75);  % 6th floor & 5
  \draw[-] (5.1, -2.25)-- (5.1, -2.75);  % 6th floor & 5
  \draw[-] (6.4, -2.25)-- (6.4, -2.75);  % 6th floor & 5
  
  \draw[-] (4.2, -3) -- (4.7, -3); % 7th floor & 1
  \draw[-] (5.5, -3) -- (6, -3); % 7th floor & 3
  \draw[-] (6.8, -3) -- (7.3, -3); % 7th floor & 4
  \draw[-] (8.1, -3) -- (8.6, -3); % 7th floor & 2
  
  %% Levi part (of type D5)
  %% nodes
  %% nilradical part
  \node at (-0.1,0) {\scalebox{0.8}{${}_{\substack{0 \\ 0000{\bf 1}}}$}}; % 1st floor & 1
  \node at (1.2,0) {\scalebox{0.8}{${}_{\substack{0 \\ 0001{\bf 1}}}$}}; % 1st floor & 2
  \node at (2.5,0) {\scalebox{0.8}{${}_{\substack{0 \\ 0011{\bf 1}}}$}}; % 1st floor & 3
  \node at (3.8,0) {\scalebox{0.8}{${}_{\substack{0 \\ 0111{\bf 1}}}$}}; % 1st floor & 4
  \node at (5.1,0) {\scalebox{0.8}{${}_{\substack{0 \\ 1111{\bf 1}}}$}}; % 1st floor & 5
  
  \node at (2.5, -1) {\scalebox{0.8}{${}_{\substack{1 \\ 0011{\bf 1}}}$}}; % 2nd floor & 6
  \node at (3.8, -1) {\scalebox{0.8}{${}_{\substack{1 \\ 0111{\bf 1}}}$}}; % 2nd floor & 7
  \node at (5.1, -1) {\scalebox{0.8}{${}_{\substack{1 \\ 1111{\bf 1}}}$}}; % 2nd floor & 8
  
  \node at (3.8, -2) {\scalebox{0.8}{${}_{\substack{1 \\ 0121{\bf 1}}}$}}; % 3rd floor & 9
  \node at (5.1, -2) {\scalebox{0.8}{${}_{\substack{1 \\ 1121{\bf 1}}}$}}; % 3rd floor & 10
  \node at (6.4, -2) {\scalebox{0.8}{${}_{\substack{1 \\ 1221{\bf 1}}}$}}; % 3rd floor & 11
  
  \node at (3.8, -3) {\scalebox{0.8}{${}_{\substack{1 \\ 0122{\bf 1}}}$}}; % 4th floor & 12
  \node at (5.1, -3) {\scalebox{0.8}{${}_{\substack{1 \\ 1122{\bf 1}}}$}}; % 4th floor & 13
  \node at (6.4, -3) {\scalebox{0.8}{${}_{\substack{1 \\ 1222{\bf 1}}}$}}; % 4th floor & 14
  \node at (7.7, -3) {\scalebox{0.8}{${}_{\substack{1 \\ 1232{\bf 1}}}$}}; % 4th floor & 15
  \node at (9, -3) {\scalebox{0.8}{${}_{\substack{2 \\ 1232{\bf 1}}}$}}; % 4th floor & 16
\end{tikzpicture}
\end{equation}

\subsubsection{{\bf Type} $\text{\bf E}_{\bf 7}$} \label{subsubsec:E7}
We denote by $\substack{b\,\,\, \\ acdefg}$ the positive root $a\alpha_1 + b\alpha_2 + c\alpha_3 + d\alpha_4 + e\alpha_5 + f\alpha_6 + g\alpha_7 \in \Phi^+$. Then the convex order associated to ${\bf i}_0$ is given as follows. 

\begin{equation} \label{eq:positive roots of E7}
{\small
\begin{split}
  & \quad\,\, \substack{0\,\,\, \\ 000001} \prec \substack{0\,\,\, \\ 000011} \prec \substack{0\,\,\, \\ 000111} \prec \substack{0\,\,\, \\ 001111} \prec \substack{0\,\,\, \\ 011111} \prec \substack{0\,\,\, \\ 111111} \prec \substack{1\,\,\, \\ 001111} \prec \substack{1\,\,\, \\ 011111} \prec \substack{1\,\,\, \\ 111111} \\  
  & \prec \substack{1\,\,\, \\ 012111} \prec \substack{1\,\,\, \\ 112111} \prec \substack{1\,\,\, \\ 122111} \prec \substack{1\,\,\, \\ 012211} \prec \substack{1\,\,\, \\ 112211} \prec \substack{1\,\,\, \\ 122211} \prec \substack{1\,\,\, \\ 123211} \prec \substack{2\,\,\, \\ 123211} \prec \substack{1\,\,\, \\ 012221} \\
  & \prec \substack{1\,\,\, \\ 112221} \prec \substack{1\,\,\, \\ 122221} \prec \substack{1\,\,\, \\ 123221} \prec \substack{2\,\,\, \\ 123221} \prec \substack{1\,\,\, \\ 123321} \prec \substack{2\,\,\, \\ 123321} \prec \substack{2\,\,\, \\ 124321} \prec \substack{2\,\,\, \\ 134321} \prec \substack{2\,\,\, \\ 234321} \\
  & \prec \cdots
\end{split}
}
\end{equation}
where $\substack{2\,\,\, \\ 234321}$ is the maximal root of $\g$.
Here the above positive roots are associated to $\biJ$ and the remaining part coincides with \eqref{eq:positive roots of E6} under the identification $\substack{b\,\,\, \\ acdef0}$ with $\substack{b\,\,\, \\ acdef}$.
As in type $\text{E}_6$, let us introduce the arrangement of dots in the plane to present the positive roots, and identify the arrangement with the one of the positive roots along the above convex order. 
Also, we denote by $\Asev$ the part of that corresponding to the positive roots defined as in \eqref{eq:beta_k} with respect to $\biJ$ (see \eqref{eq:arrangement of positive roots of nilradical in E7}).

\begin{equation} \label{eq:arrangement of positive roots of nilradical in E7}
\scalebox{1.05}{
\begin{tikzpicture}[baseline=(current  bounding  box.center)]
  %% arrows
  %% nilradical part
  \draw[-] (-1, 0) -- (-0.5, 0);
  \draw[-] (0.3,0) -- (0.8,0); % 1st floor & 5
  \draw[-] (1.6,0) -- (2.1,0); % 1st floor & 4
  \draw[-] (2.9,0) -- (3.4,0); % 1st floor & 3
  \draw[-] (4.2,0) -- (4.7,0); % 1st floor & 1
  
  \draw[-] (2.5,-0.25) -- (2.5,-0.75); % 2nd floor & 2
  \draw[-] (3.8,-0.25) -- (3.8,-0.75); % 2nd floor & 2
  \draw[-] (5.1,-0.25) -- (5.1,-0.75); % 2nd floor & 2
  
  \draw[-] (2.9,-1) -- (3.4,-1); % 3rd floor & 3
  \draw[-] (4.2,-1) -- (4.7,-1); % 3rd floor & 1
  
  \draw[-] (3.8, -1.25) -- (3.8, -1.75); % 4th floor & 4
  \draw[-] (5.1, -1.25) -- (5.1, -1.75); % 4th floor & 4
  
  \draw[-] (4.2, -2) -- (4.7, -2); % 5th floor & 1
  \draw[-] (5.5, -2) -- (6, -2); % 5th floor & 3
  
  \draw[-] (3.8, -2.25)-- (3.8, -2.75);  % 6th floor & 5
  \draw[-] (5.1, -2.25)-- (5.1, -2.75);  % 6th floor & 5
  \draw[-] (6.4, -2.25)-- (6.4, -2.75);  % 6th floor & 5
  
  \draw[-] (4.2, -3) -- (4.7, -3); % 7th floor & 1
  \draw[-] (5.5, -3) -- (6, -3); % 7th floor & 3
  \draw[-] (6.8, -3) -- (7.3, -3); % 7th floor & 4
  \draw[-] (8.1, -3) -- (8.6, -3); % 7th floor & 2
  
  \draw[-] (3.8, -3.25)-- (3.8, -3.75);  % 8th floor & 5
  \draw[-] (5.1, -3.25)-- (5.1, -3.75);  % 8th floor & 5
  \draw[-] (6.4, -3.25)-- (6.4, -3.75);  % 8th floor & 5
  \draw[-] (7.7, -3.25) -- (7.7, -3.75); 
  \draw[-] (9, -3.25) -- (9, -3.75); 
  
  \draw[-] (4.2, -4) -- (4.7, -4); % 9th floor & 1
  \draw[-] (5.5, -4) -- (6, -4); % 9th floor & 3
  \draw[-] (6.8, -4) -- (7.3, -4); % 9th floor & 4
  \draw[-] (8.1, -4) -- (8.6, -4); % 9th floor & 2
  
  \draw[-] (7.7, -4.25) -- (7.7, -4.75);  % 10th floor
  \draw[-] (9, -4.25) -- (9, -4.75); % 10th floor
  
  \draw[-] (8.1, -5) -- (8.6, -5); % 11th floor & 2
  
   \draw[-] (9, -5.25) -- (9, -5.75); % 12th floor
   \draw[-] (9, -6.25) -- (9, -6.75); % 13th floor
   \draw[-] (9, -7.25) -- (9, -7.75); % 14th floor
  
  %% Levi part (of type D5)
  
  %% nodes
  %% nilradical part
  \node at (-1.4,0) {\scalebox{0.8}{${}_{\substack{0\,\,\, \\ 00000{\bf 1}}}$}}; % 1st floor & 1
  \node at (-0.1,0) {\scalebox{0.8}{${}_{\substack{0\,\,\, \\ 00001{\bf 1}}}$}}; % 1st floor & 2
  \node at (1.2,0) {\scalebox{0.8}{${}_{\substack{0\,\,\, \\ 00011{\bf 1}}}$}}; % 1st floor & 3
  \node at (2.5,0) {\scalebox{0.8}{${}_{\substack{0\,\,\, \\ 00111{\bf 1}}}$}}; % 1st floor & 4
  \node at (3.8,0) {\scalebox{0.8}{${}_{\substack{0\,\,\, \\ 01111{\bf 1}}}$}}; % 1st floor & 5
  \node at (5.1,0) {\scalebox{0.8}{${}_{\substack{0\,\,\, \\ 11111{\bf 1}}}$}}; % 1st floor & 6

  \node at (2.5, -1) {\scalebox{0.8}{${}_{\substack{1\,\,\, \\ 00111{\bf 1}}}$}}; % 2nd floor & 7
  \node at (3.8, -1) {\scalebox{0.8}{${}_{\substack{1\,\,\, \\ 01111{\bf 1}}}$}}; % 2nd floor & 8
  \node at (5.1, -1) {\scalebox{0.8}{${}_{\substack{1\,\,\, \\ 1111{\bf 1}}}$}}; % 2nd floor & 9
  
  \node at (3.8, -2) {\scalebox{0.8}{${}_{\substack{1\,\,\, \\ 01211{\bf 1}}}$}}; % 3rd floor & 10
  \node at (5.1, -2) {\scalebox{0.8}{${}_{\substack{1\,\,\, \\ 11211{\bf 1}}}$}}; % 3rd floor & 11
  \node at (6.4, -2) {\scalebox{0.8}{${}_{\substack{1\,\,\, \\ 12211{\bf 1}}}$}}; % 3rd floor & 12
  
  \node at (3.8, -3) {\scalebox{0.8}{${}_{\substack{1\,\,\, \\ 01221{\bf 1}}}$}}; % 4th floor & 13
  \node at (5.1, -3) {\scalebox{0.8}{${}_{\substack{1\,\,\, \\ 11221{\bf 1}}}$}}; % 4th floor & 14
  \node at (6.4, -3) {\scalebox{0.8}{${}_{\substack{1\,\,\, \\ 12221{\bf 1}}}$}}; % 4th floor & 15
  \node at (7.7, -3) {\scalebox{0.8}{${}_{\substack{2\,\,\, \\ 12321{\bf 1}}}$}}; % 4th floor & 16
  \node at (9, -3) {\scalebox{0.8}{${}_{\substack{1\,\,\, \\ 12321{\bf 1}}}$}}; % 4th floor & 17
  \node at (3.8, -4) {\scalebox{0.8}{${}_{\substack{1\,\,\, \\ 01222{\bf 1}}}$}}; % 18
  
  \node at (5.1, -4) {\scalebox{0.8}{${}_{\substack{1\,\,\, \\ 11222{\bf 1}}}$}}; % 5th floor & 19
  \node at (6.4, -4) {\scalebox{0.8}{${}_{\substack{1\,\,\, \\ 12222{\bf 1}}}$}}; % 5th floor & 20
  \node at (7.7, -4) {\scalebox{0.8}{${}_{\substack{1\,\,\, \\ 12322{\bf 1}}}$}}; % 5th floor & 21
  \node at (9, -4) {\scalebox{0.8}{${}_{\substack{2\,\,\, \\ 12322{\bf 1}}}$}}; % 5th floor & 22
  \node at (7.7, -5) {\scalebox{0.8}{${}_{\substack{1\,\,\, \\ 12332{\bf 1}}}$}}; % 6th floor & 23
  \node at (9, -5) {\scalebox{0.8}{${}_{\substack{2\,\,\, \\ 12332{\bf 1}}}$}}; % 6th floor & 24
  \node at (9, -6) {\scalebox{0.8}{${}_{\substack{2\,\,\, \\ 12432{\bf 1}}}$}}; % 7th floor & 25
  \node at (9, -7) {\scalebox{0.8}{${}_{\substack{2\,\,\, \\ 13432{\bf 1}}}$}}; % 8th floor & 26
  \node at (9, -8) {\scalebox{0.8}{${}_{\substack{2\,\,\, \\ 23432{\bf 1}}}$}}; % 27
\end{tikzpicture}
}
\end{equation}

\subsubsection{{\bf The arrangement $\Delta_n$}} \label{subsubsec:Delta}
Let us consider the arrangement $\Delta_n$ of dots in the plane to present $(n-1)$-th triangular number.
We often denote by $(i, j)$ the position of a dot under the numbering on rows and columns. For example, $\Delta_{10}$ is given by

\begin{equation*}
\scalebox{1.1}{
\begin{tikzpicture}
  %% nodes
  \node at (-1.3, 0) {${}_{1}$};
  \node at (-1.3, -0.6) {${}_{2}$};
  \node at (-1.3, -1.3) {${}_{3}$};
  \node at (-1.3, -2) {${}_{4}$};
  \node at (-1.3, -2.7) {${}_{5}$};
  \node at (-1.3, -3.4) {${}_{6}$};
  \node at (-1.3, -4.1) {${}_{7}$};
  \node at (-1.3, -4.8) {${}_{8}$};
  \node at (-1.3, -5.5) {${}_{9}$};
  
  \node at (-0.8, 0.6) {${}_{9}$};
  \node at (-0.1, 0.6) {${}_{8}$};
  \node at (0.6, 0.6) {${}_{7}$};
  \node at (1.3, 0.6) {${}_{6}$};
  \node at (2, 0.6) {${}_{5}$};
  \node at (2.7, 0.6) {${}_{4}$};
  \node at (3.4, 0.6) {${}_{3}$};
  \node at (4.1, 0.6) {${}_{2}$};
  \node at (4.8, 0.6) {${}_{1}$};
  
  %%%
  \node at (-0.8,0) {${}_{\circ}$}; % 1st floor & 0
  \node at (-0.1,0) {${}_{\circ}$}; % 1st floor & 1
  \node at (0.6,0) {${}_{\circ}$}; % 1st floor & 2
  \node at (1.3,0) {${}_{\circ}$}; % 1st floor & 3
  \node at (2,0) {${}_{\circ}$}; % 1st floor & 4
  \node at (2.7,0) {${}_{\circ}$}; % 1st floor & 5
  \node at (3.4,0) {${}_{\circ}$}; % 1st floor & 5
  \node at (4.1,0) {${}_{\circ}$}; % 1st floor & 5
  \node at (4.8,0) {${}_{\circ}$}; % 1st floor & 5

  \node at (-0.1,-0.6) {${}_{\circ}$}; % 1st floor & 1
  \node at (0.6,-0.6) {${}_{\circ}$}; % 1st floor & 2
  \node at (1.3, -0.6) {${}_{\circ}$}; % 2nd floor & 6
  \node at (2, -0.6) {${}_{\circ}$}; % 2nd floor & 7
  \node at (2.7, -0.6) {${}_{\circ}$}; % 2nd floor & 8
  \node at (3.4,-0.6) {${}_{\circ}$}; % 1st floor & 5
  \node at (4.1,-0.6) {${}_{\circ}$}; % 1st floor & 5
  \node at (4.8,-0.6) {${}_{\circ}$}; % 1st floor & 5

  \node at (0.6,-1.3) {${}_{\circ}$}; % 1st floor & 2
  \node at (1.3, -1.3) {${}_{\circ}$}; % 2nd floor & 6
  \node at (2, -1.3) {${}_{\circ}$}; % 3rd floor & 9
  \node at (2.7, -1.3) {${}_{\circ}$}; % 3rd floor & 10
  \node at (3.4, -1.3) {${}_{\circ}$}; % 3rd floor & 11
  \node at (4.1,-1.3) {${}_{\circ}$}; % 1st floor & 5
  \node at (4.8,-1.3) {${}_{\circ}$}; % 1st floor & 5

  \node at (1.3, -2) {${}_{\circ}$}; % 2nd floor & 6
  \node at (2, -2) {${}_{\circ}$}; % 4th floor & 12
  \node at (2.7, -2) {${}_{\circ}$}; % 4th floor & 13
  \node at (3.4, -2) {${}_{\circ}$}; % 4th floor & 14
  \node at (4.1, -2) {${}_{\circ}$}; % 4th floor & 15
  \node at (4.8, -2) {${}_{\circ}$}; % 4th floor & 16

  \node at (2, -2.7) {${}_{\circ}$}; % 5th floor & 17
  \node at (2.7, -2.7) {${}_{\circ}$}; % 5th floor & 17
  \node at (3.4, -2.7) {${}_{\circ}$}; % 5th floor & 18
  \node at (4.1, -2.7) {${}_{\circ}$}; % 5th floor & 19
  \node at (4.8, -2.7) {${}_{\circ}$}; % 5th floor & 20
  
  \node at (2.7, -3.4) {${}_{\circ}$}; % 5th floor & 17
  \node at (3.4, -3.4) {${}_{\circ}$}; % 5th floor & 18
  \node at (4.1, -3.4) {${}_{\circ}$}; % 6th floor & 22
  \node at (4.8, -3.4) {${}_{\circ}$}; % 6th floor & 23
  
  \node at (3.4, -4.1) {${}_{\circ}$}; % 5th floor & 18
  \node at (4.1, -4.1) {${}_{\circ}$}; % 6th floor & 22
  \node at (4.8, -4.1) {${}_{\circ}$}; % 7th floor & 25
  
  \node at (4.1, -4.8) {${}_{\circ}$}; % 6th floor & 22
  \node at (4.8, -4.8) {${}_{\circ}$}; % 8th floor & 26
  
  \node at (4.8, -5.5) {${}_{\circ}$}; % 8th floor & 26
\end{tikzpicture}
}
\end{equation*}
Then we often regard $\Delta^{\J}_{6, r}$ $(r = 1, 6)$ and $\Asev$ as the sub-arrangements of $\Delta_9$ and $\Delta_{10}$, respectively, that is,
\begin{equation*}
\scalebox{0.7}{
\begin{tikzpicture}
  %% arrows
  %% nilradical part
  \draw[-] (0,0) -- (0.5,0); % 1st floor & 5
  \draw[-] (0.7,0) -- (1.2,0); % 1st floor & 4
  \draw[-] (1.4,0) -- (1.9,0); % 1st floor & 3
  \draw[-] (2.1,0) -- (2.6,0); % 1st floor & 1

  \draw[-] (1.3,-0.1) -- (1.3,-0.5); % 2nd floor & 2
  \draw[-] (2,-0.1) -- (2,-0.5); % 2nd floor & 2
  \draw[-] (2.7,-0.1) -- (2.7,-0.5); % 2nd floor & 2

  \draw[-] (1.4,-0.6) -- (1.9,-0.6); % 3rd floor & 3
  \draw[-] (2.1,-0.6) -- (2.6,-0.6); % 3rd floor & 1

  \draw[-] (2, -0.7) -- (2, -1.2); % 4th floor & 4
  \draw[-] (2.7, -0.7) -- (2.7, -1.2); % 4th floor & 4

  \draw[-] (2.1, -1.3) -- (2.6, -1.3); % 5th floor & 1
  \draw[-] (2.8, -1.3) -- (3.3, -1.3); % 5th floor & 3

  \draw[-] (2, -1.4)-- (2, -1.9);  % 6th floor & 5
  \draw[-] (2.7, -1.4)-- (2.7, -1.9);  % 6th floor & 5
  \draw[-] (3.4, -1.4)-- (3.4, -1.9);  % 6th floor & 5

  \draw[-] (2.1, -2) -- (2.6, -2); % 7th floor & 1
  \draw[-] (2.8, -2) -- (3.3, -2); % 7th floor & 3
  \draw[-] (3.5, -2) -- (4, -2); % 7th floor & 4
  \draw[-] (4.2, -2) -- (4.7, -2); % 7th floor & 2

  %% nodes
  %% nilradical part
  %\node at (-0.8,0) {${}_{\circ}$}; % 1st floor & 0
  \node at (-0.1,0) {${}_{\circ}$}; % 1st floor & 1
  \node at (0.6,0) {${}_{\circ}$}; % 1st floor & 2
  \node at (1.3,0) {${}_{\circ}$}; % 1st floor & 3
  \node at (2,0) {${}_{\circ}$}; % 1st floor & 4
  \node at (2.7,0) {${}_{\circ}$}; % 1st floor & 5
  \node at (3.4,0) {${}_{\circ}$}; % 1st floor & 5
  \node at (4.1,0) {${}_{\circ}$}; % 1st floor & 5
  \node at (4.8,0) {${}_{\circ}$}; % 1st floor & 5

  %\node at (-0.1,-0.6) {${}_{\circ}$}; % 1st floor & 1
  \node at (0.6,-0.6) {${}_{\circ}$}; % 1st floor & 2
  \node at (1.3, -0.6) {${}_{\circ}$}; % 2nd floor & 6
  \node at (2, -0.6) {${}_{\circ}$}; % 2nd floor & 7
  \node at (2.7, -0.6) {${}_{\circ}$}; % 2nd floor & 8
  \node at (3.4,-0.6) {${}_{\circ}$}; % 1st floor & 5
  \node at (4.1,-0.6) {${}_{\circ}$}; % 1st floor & 5
  \node at (4.8,-0.6) {${}_{\circ}$}; % 1st floor & 5

  %\node at (0.6,-1.3) {${}_{\circ}$}; % 1st floor & 2
  \node at (1.3, -1.3) {${}_{\circ}$}; % 2nd floor & 6
  \node at (2, -1.3) {${}_{\circ}$}; % 3rd floor & 9
  \node at (2.7, -1.3) {${}_{\circ}$}; % 3rd floor & 10
  \node at (3.4, -1.3) {${}_{\circ}$}; % 3rd floor & 11
  \node at (4.1,-1.3) {${}_{\circ}$}; % 1st floor & 5
  \node at (4.8,-1.3) {${}_{\circ}$}; % 1st floor & 5

  %\node at (1.3, -2) {${}_{\circ}$}; % 2nd floor & 6
  \node at (2, -2) {${}_{\circ}$}; % 4th floor & 12
  \node at (2.7, -2) {${}_{\circ}$}; % 4th floor & 13
  \node at (3.4, -2) {${}_{\circ}$}; % 4th floor & 14
  \node at (4.1, -2) {${}_{\circ}$}; % 4th floor & 15
  \node at (4.8, -2) {${}_{\circ}$}; % 4th floor & 16
  
  %% Levi part (of type D5) 
  %\node at (2, -2.7) {${}_{\circ}$}; % 5th floor & 17
  \node at (2.7, -2.7) {${}_{\circ}$}; % 5th floor & 17
  \node at (3.4, -2.7) {${}_{\circ}$}; % 5th floor & 18
  \node at (4.1, -2.7) {${}_{\circ}$}; % 5th floor & 19
  \node at (4.8, -2.7) {${}_{\circ}$}; % 5th floor & 20
  
  %\node at (2.7, -3.4) {${}_{\circ}$}; % 5th floor & 17
  \node at (3.4, -3.4) {${}_{\circ}$}; % 5th floor & 18
  \node at (4.1, -3.4) {${}_{\circ}$}; % 6th floor & 22
  \node at (4.8, -3.4) {${}_{\circ}$}; % 6th floor & 23
  
  %\node at (3.4, -4.1) {${}_{\circ}$}; % 5th floor & 18
  \node at (4.1, -4.1) {${}_{\circ}$}; % 6th floor & 22
  \node at (4.8, -4.1) {${}_{\circ}$}; % 7th floor & 25
  
  %\node at (4.1, -4.8) {${}_{\circ}$}; % 6th floor & 22
  \node at (4.8, -4.8) {${}_{\circ}$}; % 8th floor & 26
  
  %\node at (4.8, -5.5) {${}_{\circ}$}; % 8th floor & 26
  %%
\end{tikzpicture}
\hspace{0.7cm}
\begin{tikzpicture}
  %% arrows
  %% nilradical part
  \draw[-] (-0.2, 0) -- (-0.7, 0);
  \draw[-] (0,0) -- (0.5,0); % 1st floor & 5
  \draw[-] (0.7,0) -- (1.2,0); % 1st floor & 4
  \draw[-] (1.4,0) -- (1.9,0); % 1st floor & 3
  \draw[-] (2.1,0) -- (2.6,0); % 1st floor & 1

  \draw[-] (1.3,-0.1) -- (1.3,-0.5); % 2nd floor & 2
  \draw[-] (2,-0.1) -- (2,-0.5); % 2nd floor & 2
  \draw[-] (2.7,-0.1) -- (2.7,-0.5); % 2nd floor & 2

  \draw[-] (1.4,-0.6) -- (1.9,-0.6); % 3rd floor & 3
  \draw[-] (2.1,-0.6) -- (2.6,-0.6); % 3rd floor & 1

  \draw[-] (2, -0.7) -- (2, -1.2); % 4th floor & 4
  \draw[-] (2.7, -0.7) -- (2.7, -1.2); % 4th floor & 4

  \draw[-] (2.1, -1.3) -- (2.6, -1.3); % 5th floor & 1
  \draw[-] (2.8, -1.3) -- (3.3, -1.3); % 5th floor & 3

  \draw[-] (2, -1.4)-- (2, -1.9);  % 6th floor & 5
  \draw[-] (2.7, -1.4)-- (2.7, -1.9);  % 6th floor & 5
  \draw[-] (3.4, -1.4)-- (3.4, -1.9);  % 6th floor & 5

  \draw[-] (2.1, -2) -- (2.6, -2); % 7th floor & 1
  \draw[-] (2.8, -2) -- (3.3, -2); % 7th floor & 3
  \draw[-] (3.5, -2) -- (4, -2); % 7th floor & 4
  \draw[-] (4.2, -2) -- (4.7, -2); % 7th floor & 2
  
  \draw[-] (2, -2.1)-- (2, -2.6);  % 6th floor & 5
  \draw[-] (2.7, -2.1)-- (2.7, -2.6);  % 6th floor & 5
  \draw[-] (3.4, -2.1)-- (3.4, -2.6);  % 6th floor & 5
  \draw[-] (4.1, -2.1)-- (4.1, -2.6);  % 6th floor & 5
  \draw[-] (4.8, -2.1)-- (4.8, -2.6);  % 6th floor & 5
  
  %% Levi part (of type D5)
  %\draw[-] (3.4, -2.8) -- (3.4, -3.3);
  \draw[-] (4.1, -2.8) -- (4.1, -3.3);
  \draw[-] (4.8, -2.8) -- (4.8, -3.3);
  \draw[-] (2.1, -2.7) -- (2.6, -2.7);
  \draw[-] (2.8, -2.7) -- (3.3, -2.7);
  \draw[-] (3.5, -2.7) -- (4, -2.7);
  \draw[-] (4.2, -2.7) -- (4.7, -2.7);
  
  %\draw[-] (4.1, -3.5) -- (4.1, -4);
  \draw[-] (4.8, -3.5) -- (4.8, -4);
  %
  %\draw[-] (3.5, -3.4) -- (4, -3.4);
  \draw[-] (4.2, -3.4) -- (4.7, -3.4);

  \draw[-] (4.8, -4.2) -- (4.8, -4.7);
  %
  %\draw[-] (4.2, -4.1) -- (4.7, -4.1);
  
  \draw[-] (4.8, -4.9) -- (4.8, -5.4);

  %% nodes
  %% nilradical part
  \node at (-0.8,0) {${}_{\circ}$}; % 1st floor & 0
  \node at (-0.1,0) {${}_{\circ}$}; % 1st floor & 1
  \node at (0.6,0) {${}_{\circ}$}; % 1st floor & 2
  \node at (1.3,0) {${}_{\circ}$}; % 1st floor & 3
  \node at (2,0) {${}_{\circ}$}; % 1st floor & 4
  \node at (2.7,0) {${}_{\circ}$}; % 1st floor & 5
  \node at (3.4,0) {${}_{\circ}$}; % 1st floor & 5
  \node at (4.1,0) {${}_{\circ}$}; % 1st floor & 5
  \node at (4.8,0) {${}_{\circ}$}; % 1st floor & 5

  \node at (-0.1,-0.6) {${}_{\circ}$}; % 1st floor & 1
  \node at (0.6,-0.6) {${}_{\circ}$}; % 1st floor & 2
  \node at (1.3, -0.6) {${}_{\circ}$}; % 2nd floor & 6
  \node at (2, -0.6) {${}_{\circ}$}; % 2nd floor & 7
  \node at (2.7, -0.6) {${}_{\circ}$}; % 2nd floor & 8
  \node at (3.4,-0.6) {${}_{\circ}$}; % 1st floor & 5
  \node at (4.1,-0.6) {${}_{\circ}$}; % 1st floor & 5
  \node at (4.8,-0.6) {${}_{\circ}$}; % 1st floor & 5

  \node at (0.6,-1.3) {${}_{\circ}$}; % 1st floor & 2
  \node at (1.3, -1.3) {${}_{\circ}$}; % 2nd floor & 6
  \node at (2, -1.3) {${}_{\circ}$}; % 3rd floor & 9
  \node at (2.7, -1.3) {${}_{\circ}$}; % 3rd floor & 10
  \node at (3.4, -1.3) {${}_{\circ}$}; % 3rd floor & 11
  \node at (4.1,-1.3) {${}_{\circ}$}; % 1st floor & 5
  \node at (4.8,-1.3) {${}_{\circ}$}; % 1st floor & 5

  \node at (1.3, -2) {${}_{\circ}$}; % 2nd floor & 6
  \node at (2, -2) {${}_{\circ}$}; % 4th floor & 12
  \node at (2.7, -2) {${}_{\circ}$}; % 4th floor & 13
  \node at (3.4, -2) {${}_{\circ}$}; % 4th floor & 14
  \node at (4.1, -2) {${}_{\circ}$}; % 4th floor & 15
  \node at (4.8, -2) {${}_{\circ}$}; % 4th floor & 16
  
  %% Levi part (of type D5) 
  \node at (2, -2.7) {${}_{\circ}$}; % 5th floor & 17
  \node at (2.7, -2.7) {${}_{\circ}$}; % 5th floor & 17
  \node at (3.4, -2.7) {${}_{\circ}$}; % 5th floor & 18
  \node at (4.1, -2.7) {${}_{\circ}$}; % 5th floor & 19
  \node at (4.8, -2.7) {${}_{\circ}$}; % 5th floor & 20
  
  \node at (2.7, -3.4) {${}_{\circ}$}; % 5th floor & 17
  \node at (3.4, -3.4) {${}_{\circ}$}; % 5th floor & 18
  \node at (4.1, -3.4) {${}_{\circ}$}; % 6th floor & 22
  \node at (4.8, -3.4) {${}_{\circ}$}; % 6th floor & 23
  
  \node at (3.4, -4.1) {${}_{\circ}$}; % 5th floor & 18
  \node at (4.1, -4.1) {${}_{\circ}$}; % 6th floor & 22
  \node at (4.8, -4.1) {${}_{\circ}$}; % 7th floor & 25
  
  \node at (4.1, -4.8) {${}_{\circ}$}; % 6th floor & 22
  \node at (4.8, -4.8) {${}_{\circ}$}; % 8th floor & 26
  
  \node at (4.8, -5.5) {${}_{\circ}$}; % 8th floor & 26
\end{tikzpicture}
}
\end{equation*}
This convention will be used in Section \ref{subsec:epsilon} to define a certain statistic on $\Delta^{\J}_{6, r}$ and $\Asev$ to present a combinatorial formula of $\varepsilon_r^*$.

\begin{rem}
{\em 
If there is no confusion on ${\mf g}$ and $r$, then we write the arrangements $\Asix$ and $\Asev$ by $\Delta^{\J}$ simply.
}
\end{rem}

\section{Crystals of the quantum nilpotent subalgebras of type E} \label{sec:PBW crystals}

\subsection{Description of $\tf_i$} \label{subsec:tf_i}
Let us briefly review the result in \cite{SST}.
For simplicity, we assume that $\g$ is simply laced.
Let ${\bf i} \in R(w_0)$ be given. For $\beta \in \Phi^+$, we denote by ${\bf 1}_{\beta}$ the element in $\B_{\bf i}$ where $c_\beta=1$ and $c_\gamma=0$ for $\gamma\in \Phi^+\setminus \{\beta\}$.

Let $\sigma=(\sigma_{1},\sigma_2,\ldots,\sigma_s)$ be a sequence with $\sigma_{u}\in \{\,+\,,\,-\, , \ \cdot\ \}$. We replace a pair $(\sigma_{u},\sigma_{u'})=(+,-)$, where $u<u'$ and $\sigma_{u''}=\,\cdot\,$
for $u<u''<u'$, with $(\,\cdot\,,\,\cdot\,)$, and repeat this process as far as possible until we get a sequence with no $-$ placed to the right of $+$. We denote the resulting sequence by ${\sigma}^{\rm red}$. For another sequence $\tau=(\tau_1,\ldots,\tau_{t})$, we denote by $\sigma\cdot\tau$ the concatenation of $\sigma$ and $\tau$.

Given $i \in \I$, suppose that $\bf i$ is simply braided for $i \in \I$ (recall Definition \ref{df:simply braided}).
Then one can obtain ${\bf i}'=(i'_1,\ldots ,i'_N)\in R(w_0)$ with $i'_1=i$ by applying a sequence of braid moves consisting of either a 2-term move or 3-term braid move  
$(\gamma,\gamma',\gamma'')\rightarrow (\gamma'',\gamma',\gamma)$ with $\gamma''=\alpha_i$.
Let 
\begin{equation*}%\label{eq:triple in 3-term}
\Pi_s=\{\gamma_s,\gamma'_s,\gamma''_s\}
\end{equation*}
be the triple of positive roots of type $A_2$ with 
$\gamma'_s=\gamma_s+\gamma''_s$ and $\gamma''_s=\alpha_i$ corresponding to the $s$-th 3-term braid move for $1\leq s\leq t$.
For ${\bf c}\in \B_{\bf i}$, let
\begin{equation}\label{eq:sigma_i}
\sigma_i({\bf c})= 
(\LaTeXunderbrace{-\cdots -}_{c_{\gamma'_1}}\,\LaTeXunderbrace{+\cdots +}_{c_{\gamma_1}}
\ \cdots \
\LaTeXunderbrace{-\cdots -}_{c_{\gamma'_t}}\,\LaTeXunderbrace{+\cdots +}_{c_{\gamma_t}}
).
\end{equation}

\begin{thm}\cite[Theorem 4.6]{SST} \label{thm:signature rule} 
{\em 
Let ${\bf i} \in R(w_0)$ and $i\in \I$. Suppose that ${\bf i}$ is simply braided for $i$. Let ${\bf c}\in \B_{\bf i}$ be given.
\begin{itemize}
\item[(1)] If there exists $+$ in $\sigma_i({\bf c})^{\rm red}$ and the leftmost $+$ appears in $c_{\gamma_s}$, then 
\begin{equation*}
  \tf_i{\bf c} = {\bf c} - {\bf 1}_{\gamma_s} + {\bf 1}_{\gamma'_s}.
\end{equation*}
\item[(2)] If there exists no $+$ in $\sigma_i({\bf c})^{\rm red}$, then 
\begin{equation*}
  \hspace{5.6cm} \tf_i{\bf c} = {\bf c} + {\bf 1}_{\alpha_i}. \hspace{5.6cm} \qed
\end{equation*}
\end{itemize}
}
\end{thm}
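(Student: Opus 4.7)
The plan is to proceed by induction on the number of 3-term braid moves required to bring ${\bf i}$ to ${\bf i}' \in R(w_0)$ with $i'_1 = i$; such a sequence exists precisely by the simply braided hypothesis. The base case is when no 3-term move is needed, so ${\bf i}$ agrees with a reduced expression starting with $i$ up to 2-term moves. Since 2-term braid moves permute PBW coordinates indexed by mutually orthogonal roots, after applying them we have $\beta_1 = \alpha_i$, and in that situation it is classical (for example, from Kashiwara's embedding) that $\tilde{f}_i$ acts by $c_{\alpha_i} \mapsto c_{\alpha_i} + 1$ with all other coordinates fixed. The string $\sigma_i({\bf c})$ in \eqref{eq:sigma_i} is empty and thus contains no $+$, so Theorem \ref{thm:signature rule}(2) returns exactly this formula.

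For the inductive step, the main tool is Lusztig's piecewise-linear transformation formula for PBW data under a single braid move. A 2-term move does not contribute to $\sigma_i({\bf c})$ and simply swaps the two corresponding coordinates of $\B_{\bf i}$; by orthogonality of the two roots involved, both sides of the claim are transparently preserved. For a 3-term move of the allowed form $(\gamma_s, \gamma'_s, \gamma''_s) \to (\gamma''_s, \gamma'_s, \gamma_s)$ with $\gamma''_s = \alpha_i$, the rank-two $A_2$ formula transforms the triple $(c_{\gamma_s}, c_{\gamma'_s}, c_{\gamma''_s})$ by a min/plus rule that puts $\min(c_{\gamma_s}, c_{\gamma''_s})$ into the middle slot and adjusts the two outer ones additively. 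Comparing this with the cancellation $(+,-) \to (\cdot,\cdot)$ that produces $\sigma_i({\bf c})^{\rm red}$, one sees that the local contribution of the $s$-th move to the signature is precisely a tensor factor carrying $c_{\gamma'_s}$ copies of $-$ followed by $c_{\gamma_s}$ copies of $+$.

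This suggests the cleanest organization of the proof: read \eqref{eq:sigma_i} as a crystal embedding of $\B_{\bf i}$ into an iterated tensor product of elementary crystals indexed by $s = 1, \dots, t$, and verify that this embedding intertwines $\tilde{f}_i$ with the tensor product operator governed by \eqref{eq:tensor_product_rule}. Once this intertwining is in hand, the theorem is immediate from the tensor product rule, with the leftmost uncanceled $+$ selecting the tensor factor on which $\tilde{f}_i$ acts and forcing the replacement ${\bf c} - {\bf 1}_{\gamma_s} + {\bf 1}_{\gamma'_s}$. The main obstacle is the compatibility check: one must verify that iterating the rank-two transformations is consistent with the associative reduction of $+/-$ strings. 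The simply braided hypothesis is crucial here, as it ensures that only the direction $(\gamma, \gamma', \gamma'') \to (\gamma'', \gamma', \gamma)$ with $\gamma'' = \alpha_i$ ever occurs, so only one branch of the $A_2$ piecewise-linear formula is needed, making the induction tractable and avoiding the case analysis that would arise for a general reduced expression.
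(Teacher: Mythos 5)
This theorem is not proved in the paper at all: it is imported verbatim from \cite[Theorem 4.6]{SST} (note the \verb|\qed| attached to the statement itself), so there is no internal proof to compare against. Judged on its own terms, your proposal follows the same strategy that the cited source uses: reduce to a word beginning with $i$ (where $\tf_i$ simply increments the first Lusztig coordinate, by Lusztig/Saito), and transport $\tf_i$ back through the braid moves using the piecewise-linear $A_2$ transition map, with the $2$-term moves acting trivially. Your reformulation of the signature \eqref{eq:sigma_i} as the bracketing sequence of a tensor product, with the $s$-th factor contributing $\varepsilon_i = c_{\gamma'_s}$ and $\varphi_i = c_{\gamma_s}$, is consistent with the tensor product rule \eqref{eq:tensor_product_rule} and with both clauses of the theorem, so the architecture is sound.

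The problem is that the decisive step is asserted rather than proved. Everything in your sketch up to and including the base case is routine; the entire content of the theorem is the claim that the composite of the $A_2$ maps $(a,b,c)\mapsto(b+c-p,\,p,\,a+b-p)$, $p=\min(a,c)$, followed by incrementing the first coordinate of ${\bf i}'$ and pulling back, lands exactly on the block containing the leftmost uncancelled $+$ and there effects ${\bf c}-{\bf 1}_{\gamma_s}+{\bf 1}_{\gamma'_s}$. This is nontrivial precisely because the later $3$-term moves are applied to \emph{already transformed} coordinates (the root $\alpha_i$ carries the accumulated quantity $b+c-p$ into the next triple), whereas the signature $\sigma_i({\bf c})$ is defined purely in terms of the \emph{original} $c_{\gamma_s},c_{\gamma'_s}$; one must show that the nested minima telescope into the cancellation law for the $\pm$-string. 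You name this as ``the main obstacle'' and the ``compatibility check'' but do not carry it out, and without it the inductive step does not close: in particular your claim that ``the local contribution of the $s$-th move to the signature is precisely a tensor factor carrying $c_{\gamma'_s}$ copies of $-$ followed by $c_{\gamma_s}$ copies of $+$'' is exactly what has to be proved. To complete the argument you would need either the explicit identity $\varphi_i(b_{\bf i}({\bf c})) = \max_s\bigl(c_{\gamma_s} - \sum_{u<s}(c_{\gamma'_u}-c_{\gamma_u})\bigr)$ (and its invariance under one allowed move), or an honest verification that the map you call a ``crystal embedding'' into the iterated tensor product is a strict morphism for the operator $\tf_i$.
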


\subsection{Crystal $\B^{\J}$ of the quantum nilpotent subalgebra} \label{subsec:B^J}
In this subsection, we consider the crystal $\B^{\J}$ of the quantum nilpotent subalgebra $U_q(\wJ)$, which is obtained as a subcrystal of $\B_{{\bf i}_0}$, where ${\bf i}_0$ is the reduced expression of $w_0$ given in \eqref{eq:i_0}.
Throughout this paper, we set
\begin{equation*}
    \B = \B_{{\bf i}_0}.
\end{equation*}

Recall $\J = \I \setminus \{ r \}$. 
Let ${\mf l}$ be the maximal Levi subalgebra of $\g$ associated to $\{ \, \alpha_i \, | \, i \in \J \}$ of types $\text{D}_5$ and $\text{E}_6$, respectively, as shown on the following Dynkin diagrams:

\begin{figure*}[h]
\begin{equation} \label{eq:dynkin of l}
\scalebox{0.82}{$
\begin{split}
	\raisebox{-0.7pc}{$\text{D}_5$\, : \,}\,\,\, & \xymatrixcolsep{1pc}\xymatrixrowsep{0.5pc}\xymatrix{
	& & \overset{\alpha_2}{\circ} \ar@{-}[d] & & \\
	\color{lightgray}{\underset{\alpha_1}{\circ}} \ar@{-}@[lightgray][r] & \underset{\alpha_3}{\circ} \ar@{-}[r] & \underset{\alpha_4}{\circ} \ar@{-}[r] & \underset{\alpha_5}{\circ} \ar@{-}[r] & \underset{\alpha_6}{\circ}
	}\,\raisebox{-0.9pc}{,}
	\qquad \qquad
	\xymatrixcolsep{1pc}\xymatrixrowsep{0.5pc}\xymatrix{
	& & \overset{\alpha_2}{\circ} \ar@{-}[d] & & \\
	\underset{\alpha_1}{\circ} \ar@{-}[r] & \underset{\alpha_3}{\circ} \ar@{-}[r] & \underset{\alpha_4}{\circ} \ar@{-}[r] & \underset{\alpha_5}{\circ} \ar@{-}@[lightgray][r] & \color{lightgray}{\underset{\alpha_6}{\circ}}
	}
	\\
	\raisebox{-0.7pc}{$\text{E}_6$\, : \,}\,\,\, &
	\xymatrixcolsep{1pc}\xymatrixrowsep{0.5pc}\xymatrix{
	& & \overset{\alpha_2}{\circ} \ar@{-}[d] & & \\
	\underset{\alpha_1}{\circ} \ar@{-}[r] & \underset{\alpha_3}{\circ} \ar@{-}[r] & \underset{\alpha_4}{\circ} \ar@{-}[r] & \underset{\alpha_5}{\circ} \ar@{-}[r] & \underset{\alpha_6}{\circ} \ar@{-}@[lightgray][r] & \color{lightgray}{\underset{\alpha_7}{\circ}}
	}
\end{split}
$}
\end{equation}
\caption{For type $\text{E}_6$ (resp. $\text{E}_7$), the maximal Levi subalgebra ${\mf l}$ is of type $\text{D}_5$ (resp. $\text{E}_6$).}
\end{figure*}

Let $\Phi^+_{\J}$ be the set of positive roots of ${\mf l}$ and let $\Phi^+(\J)$ be the set of positive roots of the nilradical ${\mf u}$ of the parabolic subalgebra of $\g$ associated to ${\mf l}$. Note that
\begin{equation*}
	\Phi^+ = \Phi^+(\J) \,\, \scalebox{0.7}{$\bigsqcup$} \,\, \Phi^+_{\J},
\end{equation*}
and the positive roots in $\Phi^+(\J)$ coincide with the ones in \eqref{eq:arrangement of positive roots of nilradical in E6} and \eqref{eq:arrangement of positive roots of nilradical in E7}.

\begin{rem} \label{rem:tf_i}
{\em 
We should remark that $\tf_i$ on $\B$ can be described by using Theorem \ref{thm:signature rule} due to Proposition \ref{prop:simply braided} in the sense of \cite[Remark 3.3]{JK19} (cf. \cite[Remark 4.6]{SST}).
}
\end{rem}

For simplicity, we modify the notation \eqref{eq:sigma_i}  for type E. 
For example, if $\g$ is of type $\text{E}_6$ with $\gamma_s = \substack{0 \\ 11100}$ and $\gamma_s' = \substack{1 \\ 11100}$\,, 
then we write
\begin{equation*}
	\big( \cdots\, \LaTeXunderbrace{-\,\cdots\,-}_{c_{\gamma_s'}} \LaTeXunderbrace{+\,\cdots\,+}_{c_{\gamma_s}} \,\cdots  \big) 
	= 
	\big( \cdots\, \LaTeXunderbrace{-\,\cdots\,-}_{\,\scalebox{0.8}{$\substack{1 \\ 11100}$}} \LaTeXunderbrace{+\,\cdots\,+}_{\,\scalebox{0.8}{$\substack{0 \\ 11100}$}} \,\cdots  \big).
\end{equation*}
 
\begin{prop} \label{prop:signatures}
{\em
Let $i \in \I$ and ${\bf c} \in \B_{\bf i}$ be given, where $i \neq r$ and ${\bf i}$ is in Theorem \ref{prop:simply braided}. Then the sequence \eqref{eq:sigma_i} is given as follows.
\begin{enumerate}
	\item in type $\text{E}_6$ with $r=1$, we have
	\begin{equation*}
		\sigma_i({\bf c}) = \sigma_{i, 1}({\bf c}) \cdot \sigma_{i, 2}({\bf c}),
	\end{equation*}
	where $\sigma_{i, 1}({\bf c})$ is given by 
	\begin{equation*}
	\scalebox{0.9}{$
	{\allowdisplaybreaks
	\begin{cases}
			( 
			\LaTeXunderbrace{-\cdots-}_{\,\scalebox{0.8}{$\substack{1 \\ 11100}$}} \LaTeXunderbrace{+\cdots+}_{\,\scalebox{0.8}{$\substack{0 \\ 11100}$}}
			\LaTeXunderbrace{-\cdots-}_{\,\scalebox{0.8}{$\substack{1 \\ 11110}$}} \LaTeXunderbrace{+\cdots+}_{\,\scalebox{0.8}{$\substack{0 \\ 11110}$}}
			\LaTeXunderbrace{-\cdots-}_{\,\scalebox{0.8}{$\substack{1 \\ 11111}$}} \LaTeXunderbrace{+\cdots+}_{\,\scalebox{0.8}{$\substack{0 \\ 11111}$}}
			\LaTeXunderbrace{-\cdots-}_{\,\scalebox{0.8}{$\substack{2 \\ 12321}$}} \LaTeXunderbrace{+\cdots+}_{\,\scalebox{0.8}{$\substack{1 \\ 12321}$}}
			) & \text{if $i = 2$}, \\
			( 
			\LaTeXunderbrace{-\cdots-}_{\,\scalebox{0.8}{$\substack{0 \\ 11000}$}} \LaTeXunderbrace{+\cdots+}_{\,\scalebox{0.8}{$\substack{0 \\ 10000}$}}
			\LaTeXunderbrace{-\cdots-}_{\,\scalebox{0.8}{$\substack{1 \\ 12210}$}} \LaTeXunderbrace{+\cdots+}_{\,\scalebox{0.8}{$\substack{1 \\ 11210}$}}
			\LaTeXunderbrace{-\cdots-}_{\,\scalebox{0.8}{$\substack{1 \\ 12211}$}} \LaTeXunderbrace{+\cdots+}_{\,\scalebox{0.8}{$\substack{1 \\ 11211}$}}
			\LaTeXunderbrace{-\cdots-}_{\,\scalebox{0.8}{$\substack{1 \\ 12221}$}} \LaTeXunderbrace{+\cdots+}_{\,\scalebox{0.8}{$\substack{1 \\ 11221}$}}
			) & \text{if $i = 3$}, \\
			( 
			\LaTeXunderbrace{-\cdots-}_{\,\scalebox{0.8}{$\substack{0 \\ 11100}$}} \LaTeXunderbrace{+\cdots+}_{\,\scalebox{0.8}{$\substack{0 \\ 11000}$}}
			\LaTeXunderbrace{-\cdots-}_{\,\scalebox{0.8}{$\substack{1 \\ 11210}$}} \LaTeXunderbrace{+\cdots+}_{\,\scalebox{0.8}{$\substack{1 \\ 11110}$}}
			\LaTeXunderbrace{-\cdots-}_{\,\scalebox{0.8}{$\substack{1 \\ 11211}$}} \LaTeXunderbrace{+\cdots+}_{\,\scalebox{0.8}{$\substack{1 \\ 11111}$}}
			\LaTeXunderbrace{-\cdots-}_{\,\scalebox{0.8}{$\substack{1 \\ 12321}$}} \LaTeXunderbrace{+\cdots+}_{\,\scalebox{0.8}{$\substack{1 \\ 12221}$}}
			) & \text{if $i = 4$}, \\
			( 
			\LaTeXunderbrace{-\cdots-}_{\,\scalebox{0.8}{$\substack{0 \\ 11110}$}} \LaTeXunderbrace{+\cdots+}_{\,\scalebox{0.8}{$\substack{0 \\ 11100}$}}
			\LaTeXunderbrace{-\cdots-}_{\,\scalebox{0.8}{$\substack{1 \\ 11110}$}} \LaTeXunderbrace{+\cdots+}_{\,\scalebox{0.8}{$\substack{1 \\ 11100}$}}
			\LaTeXunderbrace{-\cdots-}_{\,\scalebox{0.8}{$\substack{1 \\ 11221}$}} \LaTeXunderbrace{+\cdots+}_{\,\scalebox{0.8}{$\substack{1 \\ 11211}$}}
			\LaTeXunderbrace{-\cdots-}_{\,\scalebox{0.8}{$\substack{1 \\ 12221}$}} \LaTeXunderbrace{+\cdots+}_{\,\scalebox{0.8}{$\substack{1 \\ 12211}$}}
			) & \text{if $i = 5$}, \\
			( 
			\LaTeXunderbrace{-\cdots-}_{\,\scalebox{0.8}{$\substack{0 \\ 11111}$}} \LaTeXunderbrace{+\cdots+}_{\,\scalebox{0.8}{$\substack{0 \\ 11110}$}}
			\LaTeXunderbrace{-\cdots-}_{\,\scalebox{0.8}{$\substack{1 \\ 11111}$}} \LaTeXunderbrace{+\cdots+}_{\,\scalebox{0.8}{$\substack{1 \\ 11110}$}}
			\LaTeXunderbrace{-\cdots-}_{\,\scalebox{0.8}{$\substack{1 \\ 11211}$}} \LaTeXunderbrace{+\cdots+}_{\,\scalebox{0.8}{$\substack{1 \\ 11210}$}}
			\LaTeXunderbrace{-\cdots-}_{\,\scalebox{0.8}{$\substack{1 \\ 12211}$}} \LaTeXunderbrace{+\cdots+}_{\,\scalebox{0.8}{$\substack{1 \\ 12210}$}}
			) & \text{if $i = 6$}, 
		\end{cases}
		}$
		}
	\end{equation*}\!\!
	and $\sigma_{i, 2}({\bf c})$ is equal to the sequence for $k$ such that $k^{\star} = i$ obtained from \cite[Proposition 3.2]{JK19} to the pair $({\mf l},\, {\bf i}_{\text{J}})$ by regarding $\substack{b \\ 0cdef}$ as the root of type $\text{D}_5$.
	\vskip 2mm
	
	\item in type $\text{E}_6$ with $r=6$, we have
	\begin{equation*}
		\sigma_i({\bf c}) = \sigma_{i, 1}({\bf c}) \cdot \sigma_{i, 2}({\bf c}),
	\end{equation*}
	where $\sigma_{i, 1}({\bf c})$ is given by 
	\begin{equation*}
	\scalebox{0.9}{$
	{\allowdisplaybreaks
		\begin{cases}
			( 
			\LaTeXunderbrace{-\cdots-}_{\,\scalebox{0.8}{$\substack{0 \\ 11111}$}} \LaTeXunderbrace{+\cdots+}_{\,\scalebox{0.8}{$\substack{0 \\ 01111}$}}
			\LaTeXunderbrace{-\cdots-}_{\,\scalebox{0.8}{$\substack{1 \\ 11111}$}} \LaTeXunderbrace{+\cdots+}_{\,\scalebox{0.8}{$\substack{1 \\ 01111}$}}
			\LaTeXunderbrace{-\cdots-}_{\,\scalebox{0.8}{$\substack{1 \\ 11211}$}} \LaTeXunderbrace{+\cdots+}_{\,\scalebox{0.8}{$\substack{1 \\ 01211}$}}
			\LaTeXunderbrace{-\cdots-}_{\,\scalebox{0.8}{$\substack{1 \\ 11221}$}} \LaTeXunderbrace{+\cdots+}_{\,\scalebox{0.8}{$\substack{1 \\ 01221}$}}
			) & \text{if $i = 1$}, \\
			( 
			\LaTeXunderbrace{-\cdots-}_{\,\scalebox{0.8}{$\substack{1 \\ 00111}$}} \LaTeXunderbrace{+\cdots+}_{\,\scalebox{0.8}{$\substack{0 \\ 00111}$}}
			\LaTeXunderbrace{-\cdots-}_{\,\scalebox{0.8}{$\substack{1 \\ 01111}$}} \LaTeXunderbrace{+\cdots+}_{\,\scalebox{0.8}{$\substack{0 \\ 01111}$}}
			\LaTeXunderbrace{-\cdots-}_{\,\scalebox{0.8}{$\substack{1 \\ 11111}$}} \LaTeXunderbrace{+\cdots+}_{\,\scalebox{0.8}{$\substack{0 \\ 11111}$}}
			\LaTeXunderbrace{-\cdots-}_{\,\scalebox{0.8}{$\substack{2 \\ 12321}$}} \LaTeXunderbrace{+\cdots+}_{\,\scalebox{0.8}{$\substack{1 \\ 12321}$}}
			) & \text{if $i = 2$}, \\
			( 
			\LaTeXunderbrace{-\cdots-}_{\,\scalebox{0.8}{$\substack{0 \\ 01111}$}} \LaTeXunderbrace{+\cdots+}_{\,\scalebox{0.8}{$\substack{0 \\ 00111}$}}
			\LaTeXunderbrace{-\cdots-}_{\,\scalebox{0.8}{$\substack{1 \\ 01111}$}} \LaTeXunderbrace{+\cdots+}_{\,\scalebox{0.8}{$\substack{1 \\ 00111}$}}
			\LaTeXunderbrace{-\cdots-}_{\,\scalebox{0.8}{$\substack{1 \\ 12211}$}} \LaTeXunderbrace{+\cdots+}_{\,\scalebox{0.8}{$\substack{1 \\ 11211}$}}
			\LaTeXunderbrace{-\cdots-}_{\,\scalebox{0.8}{$\substack{1 \\ 12221}$}} \LaTeXunderbrace{+\cdots+}_{\,\scalebox{0.8}{$\substack{1 \\ 11221}$}}
			) & \text{if $i = 3$}, \\
			( 
			\LaTeXunderbrace{-\cdots-}_{\,\scalebox{0.8}{$\substack{0 \\ 00111}$}} \LaTeXunderbrace{+\cdots+}_{\,\scalebox{0.8}{$\substack{0 \\ 00011}$}}
			\LaTeXunderbrace{-\cdots-}_{\,\scalebox{0.8}{$\substack{1 \\ 01211}$}} \LaTeXunderbrace{+\cdots+}_{\,\scalebox{0.8}{$\substack{1 \\ 01111}$}}
			\LaTeXunderbrace{-\cdots-}_{\,\scalebox{0.8}{$\substack{1 \\ 11211}$}} \LaTeXunderbrace{+\cdots+}_{\,\scalebox{0.8}{$\substack{1 \\ 11111}$}}
			\LaTeXunderbrace{-\cdots-}_{\,\scalebox{0.8}{$\substack{1 \\ 12321}$}} \LaTeXunderbrace{+\cdots+}_{\,\scalebox{0.8}{$\substack{1 \\ 12221}$}}
			) & \text{if $i = 4$}, \\
			( 
			\LaTeXunderbrace{-\cdots-}_{\,\scalebox{0.8}{$\substack{0 \\ 00011}$}} \LaTeXunderbrace{+\cdots+}_{\,\scalebox{0.8}{$\substack{0 \\ 00001}$}}
			\LaTeXunderbrace{-\cdots-}_{\,\scalebox{0.8}{$\substack{1 \\ 01221}$}} \LaTeXunderbrace{+\cdots+}_{\,\scalebox{0.8}{$\substack{1 \\ 01211}$}}
			\LaTeXunderbrace{-\cdots-}_{\,\scalebox{0.8}{$\substack{1 \\ 11221}$}} \LaTeXunderbrace{+\cdots+}_{\,\scalebox{0.8}{$\substack{1 \\ 11211}$}}
			\LaTeXunderbrace{-\cdots-}_{\,\scalebox{0.8}{$\substack{1 \\ 12221}$}} \LaTeXunderbrace{+\cdots+}_{\,\scalebox{0.8}{$\substack{1 \\ 12211}$}}
			) & \text{if $i = 5$},
		\end{cases}
		}$
		}
	\end{equation*}\!\!
	and $\sigma_{i, 2}({\bf c})$ is equal to the sequence for $k$ such that $k^{\star} = i$ obtained from \cite[Proposition 3.2]{JK19} to the pair $({\mf l},\, {\bf i}_{\text{J}})$ by regarding $\substack{b \\ acde0}$ as the root of type $\text{D}_5$.
	\vskip 2mm
	
	\item in type $\text{E}_7$ with $r=7$, we have
	\begin{equation*}
		\sigma_i({\bf c}) = \sigma_{i, 1}({\bf c}) \cdot \sigma_{i, 2}({\bf c}),
	\end{equation*}
	where $\sigma_{i, 1}({\bf c})$ is given by 
	\begin{equation*}
	{\allowdisplaybreaks
		\,\,\,\,\,\begin{cases}
			 \scalebox{0.75}{$\big(\,
			\LaTeXunderbrace{-\cdots-}_{\,\scalebox{0.8}{$\substack{0\,\,\, \\ 111111}$}} \LaTeXunderbrace{+\cdots+}_{\,\scalebox{0.8}{$\substack{0\,\,\, \\ 011111}$}}
			\LaTeXunderbrace{-\cdots-}_{\,\scalebox{0.8}{$\substack{1\,\,\, \\ 111111}$}} \LaTeXunderbrace{+\cdots+}_{\,\scalebox{0.8}{$\substack{1\,\,\, \\ 011111}$}}
			\LaTeXunderbrace{-\cdots-}_{\,\scalebox{0.8}{$\substack{1\,\,\, \\ 112111}$}} \LaTeXunderbrace{+\cdots+}_{\,\scalebox{0.8}{$\substack{1\,\,\, \\ 012111}$}}
			\LaTeXunderbrace{-\cdots-}_{\,\scalebox{0.8}{$\substack{1\,\,\, \\ 112211}$}} \LaTeXunderbrace{+\cdots+}_{\,\scalebox{0.8}{$\substack{1\,\,\, \\ 012211}$}}
			\LaTeXunderbrace{-\cdots-}_{\,\scalebox{0.8}{$\substack{1\,\,\, \\ 112221}$}} \LaTeXunderbrace{+\cdots+}_{\,\scalebox{0.8}{$\substack{1\,\,\, \\ 012221}$}}
			\LaTeXunderbrace{-\cdots-}_{\,\scalebox{0.8}{$\substack{2\,\,\, \\ 234321}$}} \LaTeXunderbrace{+\cdots+}_{\,\scalebox{0.8}{$\substack{2\,\,\, \\ 134321}$}}
			\,\big)$}
			& \text{if $i = 1$}, \\
			\scalebox{0.75}{$\big(\,
			\LaTeXunderbrace{-\cdots-}_{\,\scalebox{0.8}{$\substack{1\,\,\, \\ 001111}$}} \LaTeXunderbrace{+\cdots+}_{\,\scalebox{0.8}{$\substack{0\,\,\, \\ 011111}$}}
			\LaTeXunderbrace{-\cdots-}_{\,\scalebox{0.8}{$\substack{1\,\,\, \\ 011111}$}} \LaTeXunderbrace{+\cdots+}_{\,\scalebox{0.8}{$\substack{0\,\,\, \\ 011111}$}}
			\LaTeXunderbrace{-\cdots-}_{\,\scalebox{0.8}{$\substack{1\,\,\, \\ 111111}$}} \LaTeXunderbrace{+\cdots+}_{\,\scalebox{0.8}{$\substack{0\,\,\, \\ 111111}$}}
			\LaTeXunderbrace{-\cdots-}_{\,\scalebox{0.8}{$\substack{2\,\,\, \\ 123211}$}} \LaTeXunderbrace{+\cdots+}_{\,\scalebox{0.8}{$\substack{1\,\,\, \\ 123211}$}}
			\LaTeXunderbrace{-\cdots-}_{\,\scalebox{0.8}{$\substack{2\,\,\, \\ 123221}$}} \LaTeXunderbrace{+\cdots+}_{\,\scalebox{0.8}{$\substack{1\,\,\, \\ 123221}$}}
			\LaTeXunderbrace{-\cdots-}_{\,\scalebox{0.8}{$\substack{2\,\,\, \\ 123321}$}} \LaTeXunderbrace{+\cdots+}_{\,\scalebox{0.8}{$\substack{1\,\,\, \\ 123321}$}}
			\,\big)$} & \text{if $i = 2$}, \\
			\scalebox{0.75}{$ \big(\,
			\LaTeXunderbrace{-\cdots-}_{\,\scalebox{0.8}{$\substack{0\,\,\, \\ 011111}$}} \LaTeXunderbrace{+\cdots+}_{\,\scalebox{0.8}{$\substack{0\,\,\, \\ 001111}$}}
			\LaTeXunderbrace{-\cdots-}_{\,\scalebox{0.8}{$\substack{1\,\,\, \\ 011111}$}} \LaTeXunderbrace{+\cdots+}_{\,\scalebox{0.8}{$\substack{1\,\,\, \\ 001111}$}}
			\LaTeXunderbrace{-\cdots-}_{\,\scalebox{0.8}{$\substack{1\,\,\, \\ 122111}$}} \LaTeXunderbrace{+\cdots+}_{\,\scalebox{0.8}{$\substack{1\,\,\, \\ 112111}$}}
			\LaTeXunderbrace{-\cdots-}_{\,\scalebox{0.8}{$\substack{1\,\,\, \\ 122211}$}} \LaTeXunderbrace{+\cdots+}_{\,\scalebox{0.8}{$\substack{1\,\,\, \\ 112211}$}}
			\LaTeXunderbrace{-\cdots-}_{\,\scalebox{0.8}{$\substack{1\,\,\, \\ 122221}$}} \LaTeXunderbrace{+\cdots+}_{\,\scalebox{0.8}{$\substack{1\,\,\, \\ 112221}$}}
			\LaTeXunderbrace{-\cdots-}_{\,\scalebox{0.8}{$\substack{2\,\,\, \\ 134321}$}} \LaTeXunderbrace{+\cdots+}_{\,\scalebox{0.8}{$\substack{2\,\,\, \\ 124321}$}}
			\,\big) $} & \text{if $i = 3$}, \\
			\scalebox{0.75}{$\big(\,
			\LaTeXunderbrace{-\cdots-}_{\,\scalebox{0.8}{$\substack{0\,\,\, \\ 001111}$}} \LaTeXunderbrace{+\cdots+}_{\,\scalebox{0.8}{$\substack{0\,\,\, \\ 000111}$}}
			\LaTeXunderbrace{-\cdots-}_{\,\scalebox{0.8}{$\substack{1\,\,\, \\ 012111}$}} \LaTeXunderbrace{+\cdots+}_{\,\scalebox{0.8}{$\substack{1\,\,\, \\ 011111}$}}
			\LaTeXunderbrace{-\cdots-}_{\,\scalebox{0.8}{$\substack{1\,\,\, \\ 112111}$}} \LaTeXunderbrace{+\cdots+}_{\,\scalebox{0.8}{$\substack{1\,\,\, \\ 111111}$}}
			\LaTeXunderbrace{-\cdots-}_{\,\scalebox{0.8}{$\substack{1\,\,\, \\ 123211}$}} \LaTeXunderbrace{+\cdots+}_{\,\scalebox{0.8}{$\substack{1\,\,\, \\ 122211}$}}
			\LaTeXunderbrace{-\cdots-}_{\,\scalebox{0.8}{$\substack{1\,\,\, \\ 123221}$}} \LaTeXunderbrace{+\cdots+}_{\,\scalebox{0.8}{$\substack{1\,\,\, \\ 122221}$}}
			\LaTeXunderbrace{-\cdots-}_{\,\scalebox{0.8}{$\substack{2\,\,\, \\ 124321}$}} \LaTeXunderbrace{+\cdots+}_{\,\scalebox{0.8}{$\substack{2\,\,\, \\ 123321}$}}
			\,\big) $} & \text{if $i = 4$}, \\
			\scalebox{0.75}{$ \big(\,
			\LaTeXunderbrace{-\cdots-}_{\,\scalebox{0.8}{$\substack{0\,\,\, \\ 000111}$}} \LaTeXunderbrace{+\cdots+}_{\,\scalebox{0.8}{$\substack{0\,\,\, \\ 000011}$}}
			\LaTeXunderbrace{-\cdots-}_{\,\scalebox{0.8}{$\substack{1\,\,\, \\ 012211}$}} \LaTeXunderbrace{+\cdots+}_{\,\scalebox{0.8}{$\substack{1\,\,\, \\ 012111}$}}
			\LaTeXunderbrace{-\cdots-}_{\,\scalebox{0.8}{$\substack{1\,\,\, \\ 112211}$}} \LaTeXunderbrace{+\cdots+}_{\,\scalebox{0.8}{$\substack{1\,\,\, \\ 112111}$}}
			\LaTeXunderbrace{-\cdots-}_{\,\scalebox{0.8}{$\substack{1\,\,\, \\ 122211}$}} \LaTeXunderbrace{+\cdots+}_{\,\scalebox{0.8}{$\substack{1\,\,\, \\ 122111}$}}
			\LaTeXunderbrace{-\cdots-}_{\,\scalebox{0.8}{$\substack{1\,\,\, \\ 123321}$}} \LaTeXunderbrace{+\cdots+}_{\,\scalebox{0.8}{$\substack{1\,\,\, \\ 123221}$}}
			\LaTeXunderbrace{-\cdots-}_{\,\scalebox{0.8}{$\substack{2\,\,\, \\ 123321}$}} \LaTeXunderbrace{+\cdots+}_{\,\scalebox{0.8}{$\substack{2\,\,\, \\ 123221}$}}
			\,\big) $} & \text{if $i = 5$}, \\
			\scalebox{0.75}{$ \big(\,
			\LaTeXunderbrace{-\cdots-}_{\,\scalebox{0.8}{$\substack{0\,\,\, \\ 000111}$}} \LaTeXunderbrace{+\cdots+}_{\,\scalebox{0.8}{$\substack{0\,\,\, \\ 000011}$}}
			\LaTeXunderbrace{-\cdots-}_{\,\scalebox{0.8}{$\substack{1\,\,\, \\ 012211}$}} \LaTeXunderbrace{+\cdots+}_{\,\scalebox{0.8}{$\substack{1\,\,\, \\ 012111}$}}
			\LaTeXunderbrace{-\cdots-}_{\,\scalebox{0.8}{$\substack{1\,\,\, \\ 112211}$}} \LaTeXunderbrace{+\cdots+}_{\,\scalebox{0.8}{$\substack{1\,\,\, \\ 112111}$}}
			\LaTeXunderbrace{-\cdots-}_{\,\scalebox{0.8}{$\substack{1\,\,\, \\ 122211}$}} \LaTeXunderbrace{+\cdots+}_{\,\scalebox{0.8}{$\substack{1\,\,\, \\ 122111}$}}
			\LaTeXunderbrace{-\cdots-}_{\,\scalebox{0.8}{$\substack{1\,\,\, \\ 123321}$}} \LaTeXunderbrace{+\cdots+}_{\,\scalebox{0.8}{$\substack{1\,\,\, \\ 123221}$}}
			\LaTeXunderbrace{-\cdots-}_{\,\scalebox{0.8}{$\substack{2\,\,\, \\ 123321}$}} \LaTeXunderbrace{+\cdots+}_{\,\scalebox{0.8}{$\substack{2\,\,\, \\ 123221}$}}
			\,\big) $} & \text{if $i = 6$},
		\end{cases}
		}
	\end{equation*}\!\!
	and $\sigma_{i, 2}({\bf c})$ is equal to the sequence $\sigma_k({\bf c})$ for $k$ such that $k^{\star} = i$ in (2) by regarding $\substack{b\,\,\, \\ acdef0}$ as the root of type $\text{E}_6$.
\end{enumerate}
}
\end{prop}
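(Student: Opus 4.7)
The plan is to combine the signature rule (Theorem \ref{thm:signature rule}) with the simply braided reduced expressions produced in Proposition \ref{prop:simply braided}. Given $i \in \I \setminus \{r\}$, let $k \in \J$ be the unique index with $k^\star = i$ and take ${\bf i} = \overline{\bf i}^{\J, k} \cdot {\bf i}_{\J^\star}$ as in the proof of that proposition. Then ${\bf i}$ is simply braided for $i$, and since ${\bf i}$ differs from ${\bf i}_0$ only by 2-term braid moves the PBW crystal $\B_{\bf i}$ coincides with $\B = \B_{{\bf i}_0}$, so it suffices to read off $\sigma_i({\bf c})$ from ${\bf i}$. Concretely, I would enumerate, left-to-right, the triples $\Pi_s = \{\gamma_s, \gamma_s', \gamma_s''\}$ (with $\gamma_s'' = \alpha_i$ and $\gamma_s' = \gamma_s + \alpha_i$) associated with the 3-term braid moves used to transport $\alpha_i$ to the first position of ${\bf i}$; each triple contributes a block of $c_{\gamma_s'}$ minus signs followed by $c_{\gamma_s}$ plus signs to $\sigma_i({\bf c})$.

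Because the simply-braiding procedure first processes the initial segment $\overline{\bf i}^{\J, k}$ and afterward the tail ${\bf i}_{\J^\star}$, the resulting sequence splits canonically as $\sigma_i({\bf c}) = \sigma_{i,1}({\bf c}) \cdot \sigma_{i,2}({\bf c})$, where $\sigma_{i,1}$ collects the triples whose roots lie in $\Phi^+(\J)$ and $\sigma_{i,2}$ collects those lying in $\Phi^+_\J$. For the second factor I would reduce to the Levi: by \eqref{eq:i_J}, the tail ${\bf i}_{\J^\star}$ is precisely the reduced expression ${\bf i}_0$ of the longest element for the Levi ${\mf l}$, which is of type $\text{D}_5$ in type $\text{E}_6$ and of type $\text{E}_6$ in type $\text{E}_7$. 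Therefore the triples controlling $\sigma_{i,2}$ are exactly the triples produced inside ${\mf l}$ by the same construction, and one obtains $\sigma_{i,2}({\bf c})$ from \cite[Proposition 3.2]{JK19} in type $\text{D}_5$ (for type $\text{E}_6$), or by iterating the type $\text{E}_6$ case just established (for type $\text{E}_7$). The identification of the parameter $k$ in ${\mf l}$ with $k$ in $\g$ is built into the definition of $k^\star$.

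For the first factor $\sigma_{i,1}({\bf c})$ I would proceed case by case on $i$, using the explicit forms of $\overline{\bf i}^{\J, k}$ in \eqref{eq:ov iJ for E6} and \eqref{eq:ov iJ for E7}. Marching from right to left inside $\overline{\bf i}^{\J, k}$, each time a subword of the form $(j, i, j)$ is encountered while $\alpha_i$ is bubbled leftward, a 3-term braid move fires; the middle index corresponds via \eqref{eq:beta_k} and the convex orders \eqref{eq:positive roots of E6}, \eqref{eq:positive roots of E7} to the root $\gamma_s'$, while $\gamma_s = \gamma_s' - \alpha_i$ is the root labelling the outer positions. Reading off the ordered list of pairs $(\gamma_s', \gamma_s)$ produced in this walk yields precisely the blocks displayed in (1), (2), (3).

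The main obstacle is the bookkeeping: one must check that the list of 3-term braid moves is exhaustive and appears in the correct order, and that the preliminary 2-term braid moves indicated by the bold entries in \eqref{eq:ov iJ for E6} and \eqref{eq:ov iJ for E7} neither create nor destroy 3-term braid opportunities. The explicit choice of $\overline{\bf i}^{\J, k}$ is made precisely so that, once those 2-term moves are performed, the 3-term moves are in bijection with the cells of $\Delta^{\J}$ adjacent (through the convex order) to $\alpha_i$; this accounts for the four blocks in the type $\text{E}_6$ cases and the six blocks in the type $\text{E}_7$ cases. Once this bijection is verified in each case, the displayed formulas follow by direct inspection.
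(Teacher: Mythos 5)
Your proposal is correct and follows essentially the same route as the paper: the paper's proof is a one-line appeal to Theorem \ref{thm:signature rule} and Remark \ref{rem:tf_i} applied to the simply braided expression $\overline{\bf i}^{\J,k}\cdot{\bf i}_{\J^\star}$ with $k^\star=i$, and your argument simply spells out the bookkeeping (the split of the triples $\Pi_s$ into those with roots in $\Phi^+(\J)$ versus $\Phi^+_\J$, the reduction of $\sigma_{i,2}$ to the Levi via \cite[Proposition 3.2]{JK19} or the type $\text{E}_6$ case, and the case-by-case reading of $\sigma_{i,1}$ from \eqref{eq:ov iJ for E6} and \eqref{eq:ov iJ for E7}) that the paper leaves implicit.
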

\begin{proof}
By Theorem \ref{thm:signature rule} and Remark \ref{rem:tf_i}, the sequence $\sigma_i ({\bf c})$ is obtained from $\overline{\bf i}^{\J}_k \cdot {\bf i}_{\J}$ such that $k^{\star} = i$ (cf. \eqref{eq:ov iJ for E6} and \eqref{eq:ov iJ for E7}). 
\end{proof}

Set
\begin{equation} \label{eq:B^J and B_J}
\begin{split}
	&\B^{\J} = \left\{ \, {\bf c} = (c_{\beta}) \in \B \, \big| \,\,  c_{\beta} = 0 \,\, \text{unless} \,\, \beta \in \Phi^+(\J) \, \right\},
	\\
	&\B_{\J} = \left\{ \, {\bf c} = (c_{\beta}) \in \B \, \big| \,\, c_{\beta} = 0 \,\, \text{unless} \,\,  \beta \in \Phi^+_{\J} \, \right\},
\end{split}
\end{equation}
which we regard them as subcrystals of $\B$, where we assume that $\te_r {\bf c} = \tf_r {\bf c}  = {\bf 0}$ with $\varepsilon_r({\bf c}) = \varphi_r({\bf c}) = -\infty$ for ${\bf c} \in \B_{\J}$.
Then the crystal $\B^{\J}$ can be viewed as {\em the crystal of the quantum nilpotent subalgebra} $U_q(\wJ)$.

\begin{rem}
{\em
The crystal operators $\te_i$ and $\tf_i$ $(i \in \J)$ on $\B^{\J}$ and $\B_{\J}$ are described by the rule in Theorem \ref{thm:signature rule} with respect to the sequences $\sigma_{i, 1}$ and $\sigma_{i, 2}$ in Proposition \ref{prop:signatures}, respectively.
Note that it is well-known in \cite{Lu10} (cf. \cite{Kas91, Sai}) that for ${\bf c} \in \B$, 
\begin{equation*}
	\te_r {\bf c} = 
	\begin{cases}
		{\bf c} - {\bf 1}_{\alpha_r} & \text{if $c_{\alpha_r} \neq 0$}, \\
		{\bf 0} & \text{otherwise},
	\end{cases}
	\qquad \,\,\,
	\tf_r {\bf c} = {\bf c} + {\bf 1}_{\alpha_r},
\end{equation*}
since $\beta_1$ in the convex order of ${\bf i}_0$ is equal to $\alpha_r$.
}
\end{rem}

\begin{rem} \label{rem:signature}
{\em
For ${\bf c} = (c_{\beta_k}) \in \B^{\J}$, we often identify it with an array of $c_{\beta_k}$'s located at the position of $\beta_k$ on $\Delta^{\J}$ (recall \eqref{eq:arrangement of positive roots of nilradical in E6} and \eqref{eq:arrangement of positive roots of nilradical in E7}). Then the sequence $\sigma_i({\bf c})$ for $\tf_i$ $(i \neq r)$ is recorded easily under this identification. Let us explain it in more details.

For type $\text{E}_6$ with $r=6$, the array is given by
\begin{equation*}
\scalebox{1}{
\begin{tikzpicture}[->,>=stealth',auto,node distance=3cm,main node/.style={circle,draw,font=\sffamily\Large\bfseries}]
  %% arrows
  %% nilradical part

 %% nilradical part
  \node (1) at (-0.2,0) {${}_{\scalebox{0.7}{$c_{\beta_1}$}}$}; % 1st floor & 1
  \node (2) at (1.2,0) {${}_{\scalebox{0.7}{$c_{\beta_2}$}}$}; % 1st floor & 2
  \node (3) at (2.6,0) {${}_{\scalebox{0.7}{$c_{\beta_3}$}}$}; % 1st floor & 3
  \node (4) at (4,0) {${}_{\scalebox{0.7}{$c_{\beta_4}$}}$}; % 1st floor & 4
  \node (5) at (5.4,0) {${}_{\scalebox{0.7}{$c_{\beta_5}$}}$}; % 1st floor & 5
  
  \node (6) at (2.6, -1.2) {${}_{\scalebox{0.7}{$c_{\beta_6}$}}$}; % 2nd floor & 6
  \node (7) at (4, -1.2) {${}_{\scalebox{0.7}{$c_{\beta_7}$}}$}; % 2nd floor & 7
  \node (8) at (5.4, -1.2) {${}_{\scalebox{0.7}{$c_{\beta_8}$}}$}; % 2nd floor & 8
  
  \node (9) at (4, -2.6) {${}_{\scalebox{0.7}{$c_{\beta_9}$}}$}; % 3rd floor & 9
  \node (10) at (5.4, -2.6) {${}_{\scalebox{0.7}{$c_{\beta_{10}}$}}$}; % 3rd floor & 10
  \node (11) at (6.8, -2.6) {${}_{\scalebox{0.7}{$c_{\beta_{11}}$}}$}; % 3rd floor & 11
  
  \node (12) at (4, -4) {${}_{\scalebox{0.7}{$c_{\beta_{12}}$}}$}; % 4rd floor & 12
  \node (13) at (5.4, -4) {${}_{\scalebox{0.7}{$c_{\beta_{13}}$}}$}; % 4rd floor & 13
  \node (15) at (6.8, -4) {${}_{\scalebox{0.7}{$c_{\beta_{14}}$}}$}; % 4rd floor & 14
  \node (14) at (8.2, -4) {${}_{\scalebox{0.7}{$c_{\beta_{15}}$}}$}; % 4rd floor & 15
  \node (16) at (9.6, -4) {${}_{\scalebox{0.7}{$c_{\beta_{16}}$}}$}; % 4rd floor & 16
  
    \path[every node/.style={font=\sffamily\small}]
         %5
	(1) edge[] node [above] {${}_{5}$} (2)
	(9) edge[] node [right] {${}_{5}$} (12)
	(10) edge[] node [right] {${}_{5}$} (13)
	(11) edge[] node [right] {${}_{5}$} (15)
	
         %4
    	(2) edge[] node [above] {${}_{4}$} (3)
	(7) edge[] node [right] {${}_{4}$} (9)
	(8) edge[] node [right] {${}_{4}$} (10)
	(15) edge[] node [below] {${}_{4}$} (14)
	
	%3
	(3) edge[] node [above] {${}_{3}$} (4)
	(6) edge[] node [above] {${}_{3}$} (7)
	(10)  edge[] node [below] {${}_{3}$} (11)
	(13) edge[] node [below] {${}_{3}$} (15)
	
	%2
	(3) edge[] node [right] {${}_{2}$} (6)
	(4) edge[] node [right] {${}_{2}$} (7)
	(5)  edge[] node [right] {${}_{2}$} (8)
	(14) edge[] node [below] {${}_{2}$} (16)
	
	%1
	(4) edge[] node [above] {${}_{1}$} (5)
	(7) edge[] node [above] {${}_{1}$} (8)
	(9)  edge[] node [below] {${}_{1}$} (10)
	(12) edge[] node [below] {${}_{1}$} (13)
	;
\end{tikzpicture}
}
\end{equation*}
where $\beta_k$'s are enumerated by the convex ordering \eqref{eq:positive roots of E6} and the arrow is determined by
$c_{\beta} \, \overset{i}{\longrightarrow} \, c_{\beta'}$ if and only if $\beta' - \beta = \alpha_i$.
The sequence $\sigma_i({\bf c})$ is obtained by reading $-$'s and $+$
's as $c_{\beta_k}$ located in the terminal and initial points of the arrows labelled by $i$ from north-west to south-east.
The sequences $\sigma_i({\bf c})$ for the case of $r = 1$ are also given by the similar rule.

For type $\text{E}_7$, the array for ${\bf c} \in \B^{\J}$ with the arrows labelled by $i$ $(i \in \J)$ is given by
\begin{equation*}
\scalebox{1}{
\begin{tikzpicture}[->,>=stealth',auto,node distance=3cm,main node/.style={circle,draw,font=\sffamily\Large\bfseries}]
  %% node
  %%%
  \node (1) at (-1.6,0) {\scalebox{0.7}{$c_{\beta_1}$}}; % 1st floor & 0
  \node (2) at (-0.2,0) {\scalebox{0.7}{$c_{\beta_2}$}}; % 1st floor & 1
  \node (3) at (1.2,0) {\scalebox{0.7}{$c_{\beta_3}$}}; % 1st floor & 2
  \node (4) at (2.6,0) {\scalebox{0.7}{$c_{\beta_4}$}}; % 1st floor & 3
  \node (5) at (4,0) {\scalebox{0.7}{$c_{\beta_5}$}}; % 1st floor & 4
  \node (6) at (5.4,0) {\scalebox{0.7}{$c_{\beta_6}$}}; % 1st floor & 5

  \node (7) at (2.6, -1.2) {\scalebox{0.7}{$c_{\beta_7}$}}; % 2nd floor & 6
  \node (8) at (4, -1.2) {\scalebox{0.7}{$c_{\beta_8}$}}; % 2nd floor & 7
  \node (9) at (5.4, -1.2) {\scalebox{0.7}{$c_{\beta_9}$}}; % 2nd floor & 8

  \node (10) at (4, -2.6) {\scalebox{0.7}{$c_{\beta_{10}}$}}; % 3rd floor & 9
  \node (11) at (5.4, -2.6) {\scalebox{0.7}{$c_{\beta_{11}}$}}; % 3rd floor & 10
  \node (12) at (6.8, -2.6) {\scalebox{0.7}{$c_{\beta_{12}}$}}; % 3rd floor & 11

  \node (13) at (4, -4) {\scalebox{0.7}{$c_{\beta_{13}}$}}; % 4th floor & 12
  \node (14) at (5.4, -4) {\scalebox{0.7}{$c_{\beta_{14}}$}}; % 4th floor & 13
  \node (15) at (6.8, -4) {\scalebox{0.7}{$c_{\beta_{15}}$}}; % 4th floor & 14
  \node (16) at (8.2, -4) {\scalebox{0.7}{$c_{\beta_{16}}$}}; % 4th floor & 15
  \node (17) at (9.6, -4) {\scalebox{0.7}{$c_{\beta_{17}}$}}; % 4th floor & 16

  \node (18) at (4, -5.4) {\scalebox{0.7}{$c_{\beta_{18}}$}}; % 5th floor & 17
  \node (19) at (5.4, -5.4) {\scalebox{0.7}{$c_{\beta_{19}}$}}; % 5th floor & 17
  \node (20) at (6.8, -5.4) {\scalebox{0.7}{$c_{\beta_{20}}$}}; % 5th floor & 18
  \node (21) at (8.2, -5.4) {\scalebox{0.7}{$c_{\beta_{21}}$}}; % 5th floor & 19
  \node (22) at (9.6, -5.4) {\scalebox{0.7}{$c_{\beta_{22}}$}}; % 5th floor & 20
  
  \node (23) at (8.2, -6.8) {\scalebox{0.7}{$c_{\beta_{23}}$}}; % 6th floor & 22
  \node (24) at (9.6, -6.8) {\scalebox{0.7}{$c_{\beta_{24}}$}}; % 6th floor & 23

  \node (25) at (9.6, -8.2) {\scalebox{0.7}{$c_{\beta_{25}}$}}; % 7th floor & 25

  \node (26) at (9.6, -9.6) {\scalebox{0.7}{$c_{\beta_{26}}$}}; % 8th floor & 26
  
  \node (27) at (9.6, -11) {\scalebox{0.7}{$c_{\beta_{27}}$}}; % 8th floor & 26
  
  \path[every node/.style={font=\sffamily\small}]
         %6
	(1) edge[] node [above] {${}_{6}$} (2)
	(13) edge[] node [right] {${}_{6}$} (18)
	(14) edge[] node [right] {${}_{6}$} (19)
	(15) edge[] node [right] {${}_{6}$} (20)
	(16) edge[] node [right] {${}_{6}$} (21)
	(17) edge[] node [right] {${}_{6}$} (22)
	
         %5
    	(2) edge[] node [above] {${}_{5}$} (3)
	(10) edge[] node [right] {${}_{5}$} (13)
	(11) edge[] node [right] {${}_{5}$} (14)
	(12) edge[] node [right] {${}_{5}$} (15)
	(21) edge[] node [right] {${}_{5}$} (23)
	(22) edge[] node [right] {${}_{5}$} (24)
	
	%4
	(3) edge[] node [above] {${}_{4}$} (4)
	(8) edge[] node [right] {${}_{4}$} (10)
	(9)  edge[] node [right] {${}_{4}$} (11)
	(15) edge[] node [below] {${}_{4}$} (16)
	(20) edge[] node [below] {${}_{4}$} (21)
	(24) edge[] node [right] {${}_{4}$} (25)
	
	%3
	(4) edge[] node [above] {${}_{3}$} (5)
	(7) edge[] node [above] {${}_{3}$} (8)
	(11)  edge[] node [below] {${}_{3}$} (12)
	(14) edge[] node [below] {${}_{3}$} (15)
	(19) edge[] node [below] {${}_{3}$} (20)
	(25) edge[] node [right] {${}_{3}$} (26)
	
	%1
	(5) edge[] node [above] {${}_{1}$} (6)
	(8) edge[] node [above] {${}_{1}$} (9)
	(10)  edge[] node [below] {${}_{1}$} (11)
	(13) edge[] node [below] {${}_{1}$} (14)
	(18) edge[] node [below] {${}_{1}$} (19)
	(26) edge[] node [right] {${}_{1}$} (27)
	
	%2
	(4) edge[] node [right] {${}_{2}$} (7)
	(5) edge[] node [right] {${}_{2}$} (8)
	(6) edge[] node [right] {${}_{2}$} (9)
	(16) edge[] node [below] {${}_{2}$} (17)
	(21) edge[] node [below] {${}_{2}$} (22)
	(23) edge[] node [below] {${}_{2}$} (24)
	;
\end{tikzpicture}
}
\end{equation*}
where $\beta_k$'s are enumerated by the convex ordering \eqref{eq:positive roots of E7}.
}
\end{rem}

The following is the type E analog of \cite[Theorem 4.2]{Kw18} (cf. \cite[Corollary 3.5]{JK19} for type D).

\begin{cor}
{\em
\mbox{} \

\begin{itemize}
	\item[(1)] The crystal $\B_{\J}$ is isomorphic to the crystal of $U_q^-({\mf l})$ as an $\mf l$-crystal.
	\vskip 1mm
	
	\item[(2)] The crystal $\B$ is isomorphic to $\B^{\J} \otimes \B_{\J}$ as an $\g$-crystal by the map ${\bf c} \mapsto\! {\bf c}^{\J} \otimes\, {\bf c}_{\J}$.
\end{itemize}
}
\end{cor}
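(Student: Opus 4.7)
The plan is to deduce both parts from the signature rule (Theorem \ref{thm:signature rule}) together with the decomposition $\sigma_i({\bf c}) = \sigma_{i,1}({\bf c}) \cdot \sigma_{i,2}({\bf c})$ established in Proposition \ref{prop:signatures}. The key observation is that this concatenation pattern is precisely what the tensor product rule \eqref{eq:tensor_product_rule} reads off from $b_1 \otimes b_2$, so Proposition \ref{prop:signatures} is essentially already a tensor product decomposition in disguise.

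For (1), I would first argue that the portion ${\bf i}_{\J^\star}$ of ${\bf i}_0$ is a reduced expression of the longest element of $\weyl_\J$ (after identifying $w_{\J^\star}$ with $w_\J$ via $w^\J w_{\J^\star} = w_\J w^\J$), and that the positive roots enumerated by ${\bf i}_{\J^\star}$ are exactly those in $\Phi^+_\J$. Hence the PBW datum ${\bf c}_\J \in \mathbb{Z}_+^{|\Phi^+_\J|}$ naturally parametrizes a PBW basis of $U_q^-({\mf l})$ and so carries the crystal structure of $B(\infty)$ for ${\mf l}$. To see that this intrinsic $\mf l$-crystal structure coincides with the one inherited from $\B$, note that for ${\bf c} \in \B_\J$ the first block $\sigma_{i,1}({\bf c})$ from Proposition \ref{prop:signatures} is empty (since all coordinates in $\Phi^+(\J)$ vanish), so $\sigma_i({\bf c}) = \sigma_{i,2}({\bf c})$ for every $i \in \J$, and this second block is, by construction, the signature for the $\mf l$-PBW crystal associated to ${\bf i}_{\J^\star}$ (cf.\ \cite[Proposition 3.2]{JK19}).

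For (2), the map $\Psi : {\bf c} \mapsto {\bf c}^\J \otimes {\bf c}_\J$ is clearly a weight-preserving bijection, so only compatibility with $\te_i, \tf_i$ for all $i \in \I$ needs to be checked. The case $i = r$ is immediate: $\tf_r$ increments $c_{\alpha_r}$, which lies in ${\bf c}^\J$, and since $\varphi_r({\bf c}_\J) = -\infty$ by convention the tensor product rule forces the operator to act on the left factor (and analogously for $\te_r$). For $i \in \J$, Proposition \ref{prop:signatures} writes $\sigma_i({\bf c}) = \sigma_{i,1}({\bf c}^\J) \cdot \sigma_{i,2}({\bf c}_\J)$ where the two blocks depend only on ${\bf c}^\J$ and ${\bf c}_\J$ respectively. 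Standard cancellation gives that after reduction $\sigma_{i,k}({\bf c})^{\rm red}$ has the form $(-^{a_k}\, +^{b_k})$, with $b_1 = \varphi_i({\bf c}^\J)$ and $a_2 = \varepsilon_i({\bf c}_\J)$, and the further reduction of the concatenation cancels $\min(b_1, a_2)$ adjacent $(+,-)$ pairs at the boundary. Consequently, the leftmost surviving $+$ in $\sigma_i({\bf c})^{\rm red}$ lies in the left block iff $\varphi_i({\bf c}^\J) > \varepsilon_i({\bf c}_\J)$, which together with Theorem \ref{thm:signature rule} matches the tensor product rule for $\tf_i$ in \eqref{eq:tensor_product_rule}; the argument for $\te_i$ is symmetric.

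The bulk of the genuine work is already done in Proposition \ref{prop:signatures}, where one had to verify that the rewriting ${\bf i}_0 \leadsto \overline{\bf i}^{\J, k} \cdot {\bf i}_{\J^\star}$ realizes every $\sigma_i$ as a clean concatenation of a ${\bf c}^\J$-part and a ${\bf c}_\J$-part. Granted that, the present corollary reduces to bookkeeping about signature reductions, and the only subtle point is the boundary cancellation between $\sigma_{i,1}$ and $\sigma_{i,2}$, which is precisely what the tensor product axiom \eqref{eq:tensor_product_rule} encodes.
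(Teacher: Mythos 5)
Your proof is correct and takes essentially the same route as the paper, whose proof of this corollary consists precisely of invoking Theorem \ref{thm:signature rule}, Proposition \ref{prop:signatures} and the tensor product rule \eqref{eq:tensor_product_rule} for part (2), and a comparison with the $\mathfrak l$-crystal structure of \cite[Section 3.1]{JK19} for part (1). You have simply made explicit the boundary-cancellation bookkeeping (identifying the surviving $+$'s and $-$'s of the two blocks with $\varphi_i({\bf c}^{\J})$ and $\varepsilon_i({\bf c}_{\J})$) that the paper leaves implicit.
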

\begin{proof}
(1) It follows from comparing the crystal structure of $U_q^-({\mf l})$ given in \cite[Section 3.1]{JK19}. 

(2) It follows from Theorem \ref{thm:signature rule}, Proposition \ref{prop:signatures} and the tensor product rule \eqref{eq:tensor_product_rule}.
\end{proof}

We have the characterizations of $\B^{\J}$ and $B(s\varpi_r)$ for $s \ge 1$ by using the $\varepsilon_i^*$-statistics as follows.

\begin{prop} \label{prop:description of B^J}
{\em 
\mbox{} \

\begin{itemize}
	\item[(1)]  We have
\begin{equation*}
	\B^{\J} = \left\{ \, {\bf c} \in \B \, | \, \varepsilon_i^*({\bf c}) = 0 \,\, \text{for $i \in \J$} \, \right\}.
\end{equation*}
	\vskip 1mm
	
	\item[(2)] For $s \ge 1$, let
	\begin{equation*}
		\B^{\J, s} := \left\{\, {\bf c} \in \B^{\J} \, | \, \varepsilon_r^*({\bf c}) \le s \,\right\}
	\end{equation*}
	and it is regarded as a subcrystal of $\B^{\J}$. As $\g$-crystals, we have
	\begin{equation*}
		\B^{\J, s} \otimes T_{s\varpi_r} \cong B(s\varpi_r), \qquad
		\B^{\J} = \bigcup_{s \ge 1} \B^{\J, s}.
	\end{equation*}
\end{itemize}
}
\end{prop}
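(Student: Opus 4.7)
The plan is to prove part (1) first and then derive part (2) from (1) by invoking Kashiwara's classical embedding theorem.

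For (1), my approach is to apply the Berenstein--Zelevinsky formula of \cite{BZ} for $\varepsilon_i^*$ on $B(\infty)$ with respect to the PBW Lusztig datum determined by ${\bf i}_0 = \biJ \cdot {\bf i}_{\J^{\star}}$. For each $i \in \J$, this formula expresses $\varepsilon_i^*({\bf c})$ as a maximum of linear functionals in the entries $c_{\beta_k}$, whose coefficients are dictated by how the simple root $\alpha_i$ sits in the convex order induced by ${\bf i}_0$. Since $\alpha_i \in \Phi^+_{\J}$ for $i \in \J$, and the roots in $\Phi^+_{\J}$ arise only from the block ${\bf i}_{\J^{\star}}$ of ${\bf i}_0$, every such functional is supported on indices $\beta \in \Phi^+_{\J}$ and vanishes on indices $\beta \in \Phi^+(\J)$. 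Hence $\varepsilon_i^*({\bf c}) = 0$ for all $i \in \J$ simultaneously is equivalent to $c_\beta = 0$ for every $\beta \in \Phi^+_{\J}$, which is exactly the defining condition of $\B^{\J}$. Equivalently, one may observe that ${\bf c} \in \B^{\J}$ means that $b_{{\bf i}_0}({\bf c})$ lies in $U_q^-(\wJ)$, and the image in $B(\infty)$ of the crystal of the quantum nilpotent subalgebra $U_q^-(\wJ)$ is known to be precisely $\{\,b \in B(\infty) : \varepsilon_i^*(b) = 0 \text{ for } i \in \J\,\}$; either viewpoint gives (1).

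For (2), I invoke Kashiwara's characterization of the highest-weight crystal inside $B(\infty) \otimes T_\Lambda$:
\begin{equation*}
B(\Lambda) \;\cong\; \{\, b \otimes t_\Lambda \in B(\infty) \otimes T_\Lambda \,:\, \varepsilon_i^*(b) \leq \langle \Lambda, h_i\rangle \text{ for all } i \in \I \,\}.
\end{equation*}
Specializing to $\Lambda = s\varpi_r$ gives $\langle s\varpi_r, h_i\rangle = s\delta_{ir}$, so the condition on the right reads ``$\varepsilon_i^*({\bf c}) = 0$ for $i \in \J$ and $\varepsilon_r^*({\bf c}) \leq s$''. By part (1) this is exactly ${\bf c} \in \B^{\J, s}$, yielding the isomorphism $\B^{\J, s} \otimes T_{s\varpi_r} \cong B(s\varpi_r)$. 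The identity $\B^{\J} = \bigcup_{s \geq 1} \B^{\J, s}$ is immediate because $\varepsilon_r^*$ is finite-valued on $B(\infty)$, so any ${\bf c} \in \B^{\J}$ lies in $\B^{\J, s}$ as soon as $s \geq \varepsilon_r^*({\bf c})$.

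The main obstacle will be (1). Although the BZ formula is explicit, confirming that for each $i \in \J$ every maximand computing $\varepsilon_i^*({\bf c})$ involves only $\Phi^+_{\J}$-entries requires a careful analysis of how $\alpha_i$ propagates through ${\bf i}_0$; the simply-braided property from Proposition \ref{prop:simply braided} is helpful here, as it lets one move to a reduced expression in which $\alpha_i$ is realized within the ${\bf i}_{\J^{\star}}$-block by $2$-term (and restricted $3$-term) braid moves, so that no contribution from $\Phi^+(\J)$ can intervene. Once (1) is secured, part (2) is a routine consequence of Kashiwara's embedding theorem.
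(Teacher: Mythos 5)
Your proposal is correct and follows essentially the same route as the paper: part (1) is the known characterization of the crystal of the quantum nilpotent subalgebra $U_q^-(\wJ)$ inside $B(\infty)$ (the paper cites \cite[Section 2.1]{Lu90}, which is exactly your second viewpoint), and part (2) is Kashiwara's embedding $B(\Lambda)\cong\{\,b\otimes t_\Lambda \mid \varepsilon_i^*(b)\le \langle \Lambda,h_i\rangle\,\}$, i.e.\ \cite[Proposition 8.2]{Kas95}, applied with $\Lambda=s\varpi_r$. The additional BZ-formula route you sketch for (1) is not needed (and is the harder way in), but the argument you actually rely on matches the paper's.
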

\begin{proof}
(1) follows from \cite[Section 2.1]{Lu90}, and
(2) follows from (1) and \cite[Proposition 8.2]{Kas95}.
\end{proof}

\section{Kirillov-Reshetikhin crystals of type E with minuscule nodes} \label{sec:polytope of KR crystals}

Let $\ag$ be the affine Lie algebra of types $\text{E}_6^{(1)}$ or $\text{E}_7^{(1)}$ with the index set $\aI$ and the Dynkin diagram given by
\begin{equation*}
\begin{split}
  &\,\,\, \dynkin[extended, edge length=1cm, o/.append style={ﬁll=black}, */.append style={ﬁll=white}, labels*={\alpha_0, \alpha_1, \alpha_2, \alpha_3, \alpha_4, \alpha_5, \alpha_6}] E{o****o},\qquad
  \dynkin[extended, edge length=1cm, o/.append style={ﬁll=black}, */.append style={ﬁll=white}, labels*={\alpha_0, \alpha_1, \alpha_2, \alpha_3, \alpha_4, \alpha_5, \alpha_6, \alpha_7}] E{******o} \\
  & \,\,\, \qquad \qquad\,\,\, \text{\scriptsize Type $\text{E}_6^{(1)}$ \,\,\,} \qquad \qquad \qquad \qquad \qquad \qquad\,\, \text{\scriptsize Type $\text{E}_7^{(1)}$ \,\,\,}  
\end{split}
\end{equation*}

For $k$ in the orbit of $0$, that is, 
\begin{equation*} %\label{eq:r in affine}
	k \in
	\begin{cases}
		\{ \, 0, 1, 6 \, \} & \text{for type $\Esix$,} \\
		\{ \, 0, 7 \, \} & \text{for type $\Eseven$,}
	\end{cases}
\end{equation*}
let $\ag_k$ be the subalgebra of $\ag$ corresponding to $\{ \, \alpha_i \, | \, i \in \aI \,\setminus \, \{ k \} \, \}$.
Note that $\ag_0 = \g$ and $\ag_0 \cap \ag_k = {\mf l}$ for $k \neq 0$.
Let $\hat{P} = \bigoplus_{i \in \aI} \mathbb{Z}\Lambda_i \oplus \mathbb{Z}\delta$ be the weight lattice of $\ag$, where $\delta$ is the positive imaginary null root and $\Lambda_i$ is the $i$th fundamental weight of $\ag$ \cite{K}.
Let $\theta$ be the maximal root of $\g$. Then $\alpha_0 = -\theta$ in $\hat{P} / \mathbb{Z}\delta$.

For ${\bf c} \in \B^{\J}$, we define
\begin{equation*} %\label{eq:affine structure}
\begin{split}
	\te_0 {\bf c} =& {\bf c} + {\bf 1}_{\theta}, \qquad \quad
	\tf_0 {\bf c} =
	\begin{cases}
		{\bf c} - {\bf 1}_{\theta} & \text{if $c_{\theta} > 0$,} \\
		{\bf 0} & \text{otherwise,}
	\end{cases} \\
	\varphi_0 ({\bf c}) &= \max \{ \, k \,\, | \,\, \tf_0^k {\bf c} \neq 0 \, \}, \qquad
	\varepsilon_0 ({\bf c}) = \varphi_0 ({\bf c}) - \langle {\rm wt}({\bf c}),\, h_0 \rangle.
\end{split}
\end{equation*}
The set $\B^{\J}$ is a $\ag$-crystal with respect to ${\rm wt}$, $\varepsilon_i$, $\varphi_i$, $\te_i$, $\tf_i$ for $i \in \aI$, where ${\rm wt}$ is the restriction of ${\rm wt} : \B \rightarrow P$ to $\B^{\J}$, and then we regard the subcrystal $\B^{\J, s}$ of $\B^{\J}$ as a $\ag$-crystal. 

Now we are in position to state the main result in this paper.
\begin{thm} \label{thm:main theorem}
{\em 
For a minuscule $\varpi_r$ and $s \ge 1$, the $\ag$-crystal $\B^{\J}$ is regular and
\begin{equation*}
	\B^{\J, s} \otimes T_{s\varpi_r} \cong B^{r, s},
\end{equation*} 
where $B^{r, s}$ is the KR crystal of types $\Esix$ and $\Eseven$ associated to $s\varpi_r$.
}
\end{thm}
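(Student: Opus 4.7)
The plan is to reduce the statement to an application of a known uniqueness characterization of KR crystals for minuscule nodes, with the bulk of the work going into establishing regularity of $\mathbf{B}^{\text{J}}$ as a $\hat{\mathfrak{g}}$-crystal. Since $\varpi_r$ is minuscule, $B^{r,s}$ is classically isomorphic to $B(s\varpi_r)$, and by Proposition \ref{prop:description of B^J}(2) the $\mathfrak{g}$-crystal $\mathbf{B}^{\text{J},s} \otimes T_{s\varpi_r}$ is already isomorphic to $B(s\varpi_r)$ as $\mathfrak{g}$-crystals. So all that remains is to (i) equip $\mathbf{B}^{\text{J},s}$ with the affine operators $\tilde{e}_0,\tilde{f}_0$ in a way that makes the tensor product into a regular $\hat{\mathfrak{g}}$-crystal, and (ii) match this structure with $B^{r,s}$ via the characterization available for $r$ in the orbit of $0$ (which exists by \cite{KKMMNN92a,KKMMNN92b}).

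First I would set up the $0$-arrows via the maximal root $\theta$: by construction $\tilde{e}_0\mathbf{c} = \mathbf{c}+\mathbf{1}_\theta$ and $\tilde{f}_0\mathbf{c} = \mathbf{c}-\mathbf{1}_\theta$, and this is well-defined on $\mathbf{B}^{\text{J}}$ because $\theta \in \Phi^+(\text{J})$ occupies the bottom of the convex order restricted to $\Delta^{\text{J}}$ (the rightmost dot in the arrangements \eqref{eq:arrangement of positive roots of nilradical in E6} and \eqref{eq:arrangement of positive roots of nilradical in E7}). The key identity tying this to $B^{r,s}$ is the translation formula $t_{-\varpi_r}\tau_r = w^{\text{J}}$ from \eqref{eq:twr = wJ}: through it, the $0$-action on $\mathbf{B}^{\text{J},s}\otimes T_{s\varpi_r}$ is essentially the $*$-action of $r$ read off from the last coordinate of ${\bf i}^{\text{J}}$, and therefore
\[
 \varepsilon_0\bigl(\mathbf{c}\otimes t_{s\varpi_r}\bigr) = s - \varepsilon_r^*(\mathbf{c}), \qquad \varphi_0\bigl(\mathbf{c}\otimes t_{s\varpi_r}\bigr) = \varphi_0(\mathbf{c}) + s\langle \varpi_r,h_0\rangle.
\]
Here Theorem \ref{thm:formula of epsilon star} gives the explicit control over $\varepsilon_r^*(\mathbf{c})$, ensuring in particular that $\varepsilon_0 \ge 0$ precisely on $\mathbf{B}^{\text{J},s}$.

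Next I would verify regularity of $\mathbf{B}^{\text{J}}$ as a $\hat{\mathfrak{g}}$-crystal. For $i \in \text{J}$ this is immediate from the classical $\mathfrak{g}$-crystal structure on $\mathbf{B}^{\text{J}}$. For the remaining pairs $(0,i)$ with $i$ adjacent to $0$ in the affine Dynkin diagram, I would verify the Stembridge-type local axioms case-by-case: starting from the signature presentations in Proposition \ref{prop:signatures}, one extracts how the coordinate $c_\theta$ interacts with the $\pm$-sequences governing $\tilde{e}_i,\tilde{f}_i$, and then uses the path combinatorics of $\mathcal{P}_n$ to control the commutation/braid relations between $\tilde{e}_0$ and $\tilde{e}_i$. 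Once regularity is established, the characterization of KR crystals for minuscule nodes in the orbit of $0$ (the classical decomposition $B(s\varpi_r)$ together with compatibility with the Dynkin automorphism $\tau_r$ via $t_{-\varpi_r}\tau_r = w^{\text{J}}$) pins down the affine isomorphism type uniquely, yielding $\mathbf{B}^{\text{J},s}\otimes T_{s\varpi_r} \cong B^{r,s}$.

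The main obstacle I anticipate is the regularity check for the pairs involving $i = 0$: although Theorem \ref{thm:formula of epsilon star} supplies a clean combinatorial formula for $\varepsilon_r^*$ (hence for $\varepsilon_0$), the verification of the tensor-product / Stembridge compatibilities between $\tilde{f}_0 = (\cdot) - \mathbf{1}_\theta$ and the $\tilde{f}_i$ for $i$ neighboring $0$ requires a careful signature analysis localized around the bottom of $\Delta^{\text{J}}$, where $\theta$ lies at the intersection of several triple/quadruple paths in $\mathcal{P}_n$. This is the step where the specific structural information in Proposition \ref{prop:signatures} and the path combinatorics of Definition \ref{df:paths in Pn} are indispensable.
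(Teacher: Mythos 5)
Your high-level skeleton coincides with the paper's endgame: reduce to the classical $\ag_0$-structure supplied by Proposition \ref{prop:description of B^J}(2), then invoke a uniqueness characterization of $B^{r,s}$ (in the paper this is \cite[Theorem 3.15]{JS}, extended to $\Eseven$ in Remark \ref{rem:uniqueness E7}; the KKMMNN references you cite give existence, not the uniqueness statement). The divergence, and the gap, is in the middle step. You propose to prove regularity of $\B^{\J}$ by a case-by-case verification of Stembridge-type local axioms for the pairs $(0,i)$, and you explicitly flag this as the main obstacle without carrying it out. The paper avoids any local-axiom check: by Proposition \ref{prop:signatures} (see Remark \ref{rem:signature}), the signature rules for $\tf_i$, $i\in\aI\setminus\{r\}$, are taken by duality into the signature rules for $i\in\aI\setminus\{0\}$ under the diagram isomorphism $\ag_r\cong\ag_0$ (e.g.\ $\alpha_0\leftrightarrow\alpha_6$, $\alpha_5\leftrightarrow\alpha_2$ for $\Esix$ with $r=6$). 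Hence the $\ag_r$-crystal $\B^{\J,s}\otimes T_{s\varpi_r}$ is the dual of its own $\ag_0$-structure, so it is regular and isomorphic to $B(s\varpi_{r^*})$; since $\aI=(\aI\setminus\{0\})\cup(\aI\setminus\{r\})$, regularity as a $\ag$-crystal follows in one stroke.

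This matters for a second reason: even if your Stembridge check succeeded, regularity alone is not enough to apply the uniqueness theorem. One must also identify the branching of the candidate crystal to $\ag_r$ as $B(s\varpi_{r^*})$, matching the fact (from \cite{Cha01}) that $B^{r,s}$ is classically irreducible for both $\ag_0$ and $\ag_r$. Your appeal to ``compatibility with the Dynkin automorphism $\tau_r$'' names the right ingredient but supplies no mechanism for computing that decomposition; the duality of signature rules is precisely that mechanism, and it does double duty. Two smaller inaccuracies: the identity $\varepsilon_0({\bf c}\otimes t_{s\varpi_r})=s-\varepsilon_r^*({\bf c})$ is asserted without proof and is not used in the paper's argument, and the claim that ``$\varepsilon_0\ge 0$ precisely on $\B^{\J,s}$'' is not meaningful as stated, since $\varepsilon_0\ge 0$ holds on any regular crystal; the condition $\varepsilon_r^*\le s$ serves to carve out $\B^{\J,s}$ via Proposition \ref{prop:description of B^J}(2) (with Theorem \ref{thm:formula of epsilon star} making it explicit), not to certify nonnegativity of $\varepsilon_0$.
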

\begin{proof}
By Proposition \ref{prop:description of B^J}(2), $\B^{\J, s} \otimes T_{s\varpi_r}$ is a regular $\ag_0$-crystal isomorphic to $B(s\varpi_r)$.
By Proposition \ref{prop:signatures} (see also Remark \ref{rem:signature}), one can check that the $\ag_r$-crystal $\B^{\J, s} \otimes T_{s\varpi_r}$ is isomorphic to the dual crystal of $\ag_0$-crystal $\B^{\J, s} \otimes T_{s\varpi_r}$ by regarding $\ag_r \cong \ag_0$ under the following correspondence
\begin{equation*}
	\begin{cases}
		\alpha_0 \, \leftrightarrow \, \alpha_1, \quad \alpha_3 \, \leftrightarrow \, \alpha_2 & \text{for type $\Esix$ with $r=1$,} \\
		\alpha_0 \, \leftrightarrow \, \alpha_6, \quad \alpha_5 \, \leftrightarrow \, \alpha_2 & \text{for type $\Esix$ with $r=6$,} \\
	    \alpha_0 \, \leftrightarrow \, \alpha_7, \quad \alpha_1 \, \leftrightarrow \, \alpha_6, \quad \alpha_3 \, \leftrightarrow \, \alpha_5 & \text{for type $\Eseven$ with $r = 7$.}
	\end{cases}
\end{equation*}
This implies that
the $\ag_r$-crystal $\B^{\J, s} \otimes T_{s\varpi_r}$ is isomorphic to $B(s\varpi_{r^*})$ under the above correspondence, where $r^*$ is determined by $w_0(\alpha_r) = -\alpha_{r^*}$ (cf. \cite{Bo}).
It was well-known in \cite{Cha01} that $B^{r, s}$ is classically irreducible, that is, $B^{r, s} \cong B(s\varpi_r)$ as a $\ag_0$-crystal. Furthermore, $B^{r, s} \cong B(s\varpi_{r^*})$ as a $\ag_r$-crystal . 
Hence, it follows from \cite[Theorem 3.15]{JS} (see Remark \ref{rem:uniqueness E7} for type $\Eseven$) that $\B^{\J, s} \otimes T_{s\varpi_r}$ is isomorphic to $B^{r, s}$ as a $\ag$-crystal.
\end{proof}

\begin{rem} \label{rem:uniqueness E7}
{\em 
Recall that ${\mf l}$ is of type $\text{E}_6$ when $\ag$ is of type $\Eseven$.
Since the ${\mf l}$-highest weight vectors $b$ in $B^{7, s}$ are distinguished by their weight with the value $\langle {\rm wt}(b),\, \alpha_7^{\vee} \rangle$ \cite[Section 3.4]{BS}, it allows us to obtain an analog of \cite[Theorem 3.15]{JS} for type $\Eseven$ with $r=7$ by following the proof of \cite[Theorem 3.15]{JS} with $K = \aI \, \setminus \, \{ \, 0, 7 \, \}$ since $\langle {\rm wt}(b), \, \alpha_7^{\vee} \rangle = \langle {\rm wt}(b),\, \alpha_0^{\vee} \rangle$ for a ${\mf l}$-highest weight vector $b$ by the level-zero condition (cf. \cite[Remark 3.4]{JS}).
}
\end{rem}

\begin{rem}
{\em 
By Proposition \ref{prop:description of B^J} and Theorem \ref{thm:main theorem}, the crystal $\B^{\J}$ can viewed as the limit of KR crystals $B^{r, s}$ as $s \rightarrow \infty$.
}
\end{rem}

\begin{rem}
{\em 
After this paper was submitted, Hiroshima in \cite{Hiro} proved the perfectness of KR crystal $B^{r, s}$ for a minuscule $r$ and $s \ge 1$ in type $\text{E}_{6,7}^{(1)}$ based on Theorem \ref{thm:main theorem}.
}
\end{rem}

\section{Combinatorial description of the $\varepsilon_r^*$-statistic} \label{subsec:epsilon}
\subsection{Combinatorial formula for $\varepsilon_r^*$}
In this section, we give a combinatorial formula of the statistic $\varepsilon_r^*$ on $\B^{\J}$ for type $\text{E}_n$ in terms of the triple or quadruple paths on $\Delta_{n+3}$, respectively.

\begin{df} {\em (cf. \cite[Definition 3.10]{JK19})} \label{df:paths}
{\em 
Let $\beta$ be a dot in $\Delta_n$.

\begin{enumerate}	
	\item A {\em (single) path} on $\Delta_n$  is a sequence $p = (\gamma_1, \dots, \gamma_s)$ of dots in $\Delta_n$ for some $s \ge 1$ such that  the position $(i_k, j_k)$ of $\gamma_k$ is equal to $(i_{k-1}+1, j_{k-1})$ or $(i_{k-1}, j_{k-1}+1)$ for all $2 \le k \le s$.
	\vskip 2mm
	
	\item A {\em double path} at $\beta$ is a pair of paths ${\bf p} = (p_1, p_2)$ in $\Delta$ of the same length with $p_1 = (\gamma_1, \dots, \gamma_s)$ and $p_2 = (\delta_1, \dots, \delta_s)$ such that $\gamma_1 = \delta_1 = \beta$ and $\gamma_i$ is located strictly diagonally left of $\delta_i$ for $2 \le i \le s$.
	\vskip 2mm
			
	\item A {\em triple path at $\beta$} is a sequence of paths ${\bf p} = (p_1, p_2, p_3)$ such that each pair $(p_k, p_{k+1})$ is a double path at $\beta_k$ for $k = 1, 2$, where $\beta_1 = \beta$ and $\beta_2$ is located strictly to the left of or below $\beta_1$ by one position.
	\vskip 2mm
	
	\item A {\em quadruple path at $\beta$} is a sequence of paths ${\bf p} = (p_1, p_2, p_3, p_4)$  such that each pair $(p_k, p_{k+1})$ is a double path at $\beta_k$ for $1 \le k \le 3$, where $\beta_1 = \beta$ and $\beta_k$ is located strictly to the left of or below $\beta_{k-1}$ by one position for all $k = 2, 3$.

\end{enumerate}
}
\end{df}

\begin{df} \label{df:paths in Pn}
{\em
Let $\beta$ be the dot in $\Delta_{n+3}$ located at the position $(1, 1)$.
We denote by $p_k$ a path in $\Delta_{n+3}$ with $p_k = \big(\alpha_1^{(k)}, \dots, \alpha_s^{(k)}\big)$ for $1 \le k \le 5$.
We define $\mathcal{P}_n$ as follows.
 
\begin{enumerate}
	\item if $n = 6$, then $\mathcal{P}_6$ is the set of the triple paths ${\bf p} = (p_1,\, p_2,\, p_3)$ at $\beta$ in $\Delta_9$ satisfying the following two conditions:
	\vskip 1mm
\begin{enumerate}
	\item[(i)] $\alpha_s^{(k+1)}$ is located strictly diagonally left of $\alpha_s^{(k)}$ by one position for all $k = 1,\, 2$,
	\vskip 2mm
	
	\item[(ii)] $\alpha_s^{(1)}$, $\alpha_s^{(2)}$ and $\alpha_s^{(3)}$ are placed above the $4$-th row of $\Delta_9$ from top.
\end{enumerate}
	\vskip 2mm

	\item if $n=7$, then $\mathcal{P}_7$ is the set of the quadruple paths ${\bf p} = (p_1, p_2, p_3, p_4)$ at $\beta$ in $\Delta_{10}$ passing through the dots located in $(4, 1)$, $(1, 4)$ satisfying
	\vskip 2mm
	\begin{enumerate}
		\item[(i)] $\alpha_s^{(k+1)}$ is located strictly diagonally left of $\alpha_s^{(k)}$ by one position for all $1 \le k \le 3$,
		\vskip 2mm
		
		\item[(ii)] $\alpha_s^{(1)}, \dots, \alpha_s^{(4)}$ are placed above or below the $5$-th row of $\Delta_{10}$ from top,
	\end{enumerate}
	\vskip 2mm
	and the pairs $({\bf p}_1, {\bf p}_2)$ of a double path ${\bf p}_1$ and a triple path ${\bf p}_2$ satisfying
	\vskip 2mm
\begin{enumerate}		
	\item[(iii)] ${\bf p}_1 = (p_1, p_2)$ is a double path at $\beta'$ such that $\alpha_s^{(1)}$ or $\alpha_s^{(2)}$ is placed at $(5, 5)$, where $\beta'$ is located at $(3, 4)$ or $(4, 3)$ in $\Delta_{10}$, and ${\bf p}_2 = (p_3, p_4, p_5)$ is a triple path at $\beta$ such that ${\bf p}_2$ passes through the dots located at $(4, 1)$, $(1, 4)$, $(2, 6)$ and $(6, 2)$, while it does not pass through the dot located at $(3, 3)$ in $\Delta_{10}$, 
	\vskip 2mm
	
	\item[(iv)] for $({\bf p}_1, {\bf p}_2)$, the paths ${\bf p}_1$ and ${\bf p}_2$ are non-intersecting and $\{ \, \alpha_s^{(k)} \, | \, 1 \le k \le 5 \}$ satisfies the condition as in (1)-(i) under an enumeration such that $\alpha_s^{(1)}$ and $\alpha_s^{(2)}$ are placed between $\alpha_s^{(k)}$ for $3 \le k \le 5$ (cf. Example \ref{ex:paths}).
\end{enumerate}
\end{enumerate}
}
\end{df}

\begin{ex} \label{ex:paths}
{\em 
The followings are examples of a triple path in $\mathcal{P}_6$, a quadruple path and a pair of double path and triple path in $\mathcal{P}_7$, respectively.
\begin{equation*}
\scalebox{0.73}{
\begin{tikzpicture}
  %% node
  %%%
  %\node at (-0.8,0) {${}_{\circ}$}; % 1st floor & 0
  \node at (-0.1,0) {${}_{\circ}$}; % 1st floor & 1
  \node at (0.6,0) {${}_{\circ}$}; % 1st floor & 2
  \node at (1.3,0) {${}_{\circ}$}; % 1st floor & 3
  \node at (2,0) {${}_{\circ}$}; % 1st floor & 4
  \node at (2.7,0) {${}_{\circ}$}; % 1st floor & 5
  \node at (3.4,0) {${}_{\circ}$}; % 1st floor & 5
  \node at (4.1,0) {${}_{\circ}$}; % 1st floor & 5
  \node at (4.8,0) {${}_{\circ}$}; % 1st floor & 5

  %\node at (-0.1,-0.6) {${}_{\circ}$}; % 1st floor & 1
  \node at (0.6,-0.6) {${}_{\circ}$}; % 1st floor & 2
  \node at (1.3, -0.6) {${}_{\circ}$}; % 2nd floor & 6
  \node at (2, -0.6) {${}_{\circ}$}; % 2nd floor & 7
  \node at (2.7, -0.6) {${}_{\circ}$}; % 2nd floor & 8
  \node at (3.4,-0.6) {${}_{\circ}$}; % 1st floor & 5
  \node at (4.1,-0.6) {${}_{\circ}$}; % 1st floor & 5
  \node at (4.8,-0.6) {${}_{\circ}$}; % 1st floor & 5

  %\node at (0.6,-1.3) {${}_{\circ}$}; % 1st floor & 2
  \node at (1.3, -1.3) {${}_{\circ}$}; % 2nd floor & 6
  \node at (2, -1.3) {${}_{\circ}$}; % 3rd floor & 9
  \node at (2.7, -1.3) {${}_{\circ}$}; % 3rd floor & 10
  \node at (3.4, -1.3) {${}_{\circ}$}; % 3rd floor & 11
  \node at (4.1,-1.3) {${}_{\circ}$}; % 1st floor & 5
  \node at (4.8,-1.3) {${}_{\circ}$}; % 1st floor & 5

  %\node at (1.3, -2) {${}_{\circ}$}; % 2nd floor & 6
  \node at (2, -2) {${}_{\circ}$}; % 4th floor & 12
  \node at (2.7, -2) {${}_{\circ}$}; % 4th floor & 13
  \node at (3.4, -2) {${}_{\circ}$}; % 4th floor & 14
  \node at (4.1, -2) {${}_{\circ}$}; % 4th floor & 15
  \node at (4.8, -2) {${}_{\circ}$}; % 4th floor & 16

  %\node at (2, -2.7) {${}_{\circ}$}; % 5th floor & 17
  \node at (2.7, -2.7) {${}_{\circ}$}; % 5th floor & 17
  \node at (3.4, -2.7) {${}_{\circ}$}; % 5th floor & 18
  \node at (4.1, -2.7) {${}_{\circ}$}; % 5th floor & 19
  \node at (4.8, -2.7) {${}_{\circ}$}; % 5th floor & 20
  
  %\node at (2.7, -3.4) {${}_{\circ}$}; % 5th floor & 17
  \node at (3.4, -3.4) {${}_{\circ}$}; % 5th floor & 18
  \node at (4.1, -3.4) {${}_{\circ}$}; % 6th floor & 22
  \node at (4.8, -3.4) {${}_{\circ}$}; % 6th floor & 23
  
  %\node at (3.4, -4.1) {${}_{\circ}$}; % 5th floor & 18
  \node at (4.1, -4.1) {${}_{\circ}$}; % 6th floor & 22
  \node at (4.8, -4.1) {${}_{\circ}$}; % 7th floor & 25
  
  %\node at (4.1, -4.8) {${}_{\circ}$}; % 6th floor & 22
  \node at (4.8, -4.8) {${}_{\circ}$}; % 8th floor & 26
  
  %\node at (4.8, -5.5) {${}_{\circ}$}; % 8th floor & 26
  %%
  
  \draw[-, line width=0.5mm] (4.2, 0) -- (4.7, 0);
  \draw[-, line width=0.5mm] (3.5, 0) -- (4, 0);
  \draw[-, line width=0.5mm] (2.8, 0) -- (3.3, 0);
  \draw[-, line width=0.5mm] (2.1, 0) -- (2.6, 0);
  \draw[-, line width=0.5mm] (1.4, 0) -- (1.9, 0);
  \draw[-, line width=0.5mm] (0.7, 0) -- (1.2, 0);
  \draw[-, line width=0.5mm] (0.6, -0.1) -- (0.6, -0.5);
  
  \draw[-, line width=0.5mm] (4.8, -0.1) -- (4.8, -0.5);
  \draw[-, line width=0.5mm] (4.2, -0.6) -- (4.7, -0.6);
  \draw[-, line width=0.5mm] (3.5, -0.6) -- (4, -0.6);
  \draw[-, line width=0.5mm] (2.8, -0.6) -- (3.3, -0.6);
  \draw[-, line width=0.5mm] (2.1, -0.6) -- (2.6, -0.6);
  \draw[-, line width=0.5mm] (1.4, -0.6) -- (1.9, -0.6);
  \draw[-, line width=0.5mm] (1.3, -0.7) -- (1.3, -1.2);
  
  \draw[-, line width=0.5mm] (4.8, -0.7) -- (4.8, -1.2);
  \draw[-, line width=0.5mm] (4.2, -1.3) -- (4.7, -1.3);
  \draw[-, line width=0.5mm] (3.5, -1.3) -- (4, -1.3);
  \draw[-, line width=0.5mm] (3.4, -1.4) -- (3.4, -1.9);
  \draw[-, line width=0.5mm] (2.8, -2) -- (3.3, -2);
  \draw[-, line width=0.5mm] (2.1, -2) -- (2.6, -2);
\end{tikzpicture}
\quad \quad
\begin{tikzpicture}
  %% node
  %%%
  \node at (-0.8,0) {${}_{\circ}$}; % 1st floor & 0
  \node at (-0.1,0) {${}_{\circ}$}; % 1st floor & 1
  \node at (0.6,0) {${}_{\circ}$}; % 1st floor & 2
  \node at (1.3,0) {${}_{\circ}$}; % 1st floor & 3
  \node at (2,0) {${}_{\circ}$}; % 1st floor & 4
  \node at (2.7,0) {${}_{\circ}$}; % 1st floor & 5
  \node at (3.4,0) {${}_{\circ}$}; % 1st floor & 5
  \node at (4.1,0) {${}_{\circ}$}; % 1st floor & 5
  \node at (4.8,0) {${}_{\circ}$}; % 1st floor & 5

  \node at (-0.1,-0.6) {${}_{\circ}$}; % 1st floor & 1
  \node at (0.6,-0.6) {${}_{\circ}$}; % 1st floor & 2
  \node at (1.3, -0.6) {${}_{\circ}$}; % 2nd floor & 6
  \node at (2, -0.6) {${}_{\circ}$}; % 2nd floor & 7
  \node at (2.7, -0.6) {${}_{\circ}$}; % 2nd floor & 8
  \node at (3.4,-0.6) {${}_{\circ}$}; % 1st floor & 5
  \node at (4.1,-0.6) {${}_{\circ}$}; % 1st floor & 5
  \node at (4.8,-0.6) {${}_{\circ}$}; % 1st floor & 5

  \node at (0.6,-1.3) {${}_{\circ}$}; % 1st floor & 2
  \node at (1.3, -1.3) {${}_{\circ}$}; % 2nd floor & 6
  \node at (2, -1.3) {${}_{\circ}$}; % 3rd floor & 9
  \node at (2.7, -1.3) {${}_{\circ}$}; % 3rd floor & 10
  \node at (3.4, -1.3) {${}_{\circ}$}; % 3rd floor & 11
  \node at (4.1,-1.3) {${}_{\circ}$}; % 1st floor & 5
  \node at (4.8,-1.3) {${}_{\circ}$}; % 1st floor & 5

  \node at (1.3, -2) {${}_{\circ}$}; % 2nd floor & 6
  \node at (2, -2) {${}_{\circ}$}; % 4th floor & 12
  \node at (2.7, -2) {${}_{\circ}$}; % 4th floor & 13
  \node at (3.4, -2) {${}_{\circ}$}; % 4th floor & 14
  \node at (4.1, -2) {${}_{\circ}$}; % 4th floor & 15
  \node at (4.8, -2) {${}_{\circ}$}; % 4th floor & 16

  \node at (2, -2.7) {${}_{\circ}$}; % 5th floor & 17
  \node at (2.7, -2.7) {${}_{\circ}$}; % 5th floor & 17
  \node at (3.4, -2.7) {${}_{\circ}$}; % 5th floor & 18
  \node at (4.1, -2.7) {${}_{\circ}$}; % 5th floor & 19
  \node at (4.8, -2.7) {${}_{\circ}$}; % 5th floor & 20
  
  \node at (2.7, -3.4) {${}_{\circ}$}; % 5th floor & 17
  \node at (3.4, -3.4) {${}_{\circ}$}; % 5th floor & 18
  \node at (4.1, -3.4) {${}_{\circ}$}; % 6th floor & 22
  \node at (4.8, -3.4) {${}_{\circ}$}; % 6th floor & 23
  
  \node at (3.4, -4.1) {${}_{\circ}$}; % 5th floor & 18
  \node at (4.1, -4.1) {${}_{\circ}$}; % 6th floor & 22
  \node at (4.8, -4.1) {${}_{\circ}$}; % 7th floor & 25
  
  \node at (4.1, -4.8) {${}_{\circ}$}; % 6th floor & 22
  \node at (4.8, -4.8) {${}_{\circ}$}; % 8th floor & 26
  
  \node at (4.8, -5.5) {${}_{\circ}$}; % 8th floor & 26

  \draw[-, line width=0.5mm] (4.2, 0) -- (4.7, 0);
  \draw[-, line width=0.5mm] (3.5, 0) -- (4, 0);
  \draw[-, line width=0.5mm] (2.8, 0) -- (3.3, 0);
  \draw[-, line width=0.5mm] (2.7, -0.1) -- (2.7, -0.5);
  \draw[-, line width=0.5mm] (2.1, -0.6) -- (2.6, -0.6);
  \draw[-, line width=0.5mm] (1.4, -0.6) -- (1.9, -0.6);
  \draw[-, line width=0.5mm] (0.7, -0.6) -- (1.2, -0.6);
  \draw[-, line width=0.5mm] (0, -0.6) -- (0.5, -0.6);
  
  \draw[-, line width=0.5mm] (4.8, -0.1) -- (4.8, -0.5);
  \draw[-, line width=0.5mm] (4.2, -0.6) -- (4.7, -0.6);
  \draw[-, line width=0.5mm] (3.5, -0.6) -- (4, -0.6);
  \draw[-, line width=0.5mm] (3.4, -0.7) -- (3.4, -1.2);
  \draw[-, line width=0.5mm] (2.8, -1.3) -- (3.3, -1.3);
  \draw[-, line width=0.5mm] (2.1, -1.3) -- (2.6, -1.3);
  \draw[-, line width=0.5mm] (1.4, -1.3) -- (1.9, -1.3);
  \draw[-, line width=0.5mm] (0.7, -1.3) -- (1.2, -1.3);
  
  \draw[-, line width=0.5mm] (4.8, -0.7) -- (4.8, -1.2);
  \draw[-, line width=0.5mm] (4.2, -1.3) -- (4.7, -1.3);
  \draw[-, line width=0.5mm] (4.1, -1.4) -- (4.1, -1.9);
  \draw[-, line width=0.5mm] (3.5, -2) -- (4, -2);
  \draw[-, line width=0.5mm] (2.8, -2) -- (3.3, -2);
  \draw[-, line width=0.5mm] (2.1, -2) -- (2.6, -2);
  \draw[-, line width=0.5mm] (1.4, -2) -- (1.9, -2);
  
   \draw[-, line width=0.5mm] (4.8, -1.4) -- (4.8, -1.9);
   \draw[-, line width=0.5mm] (4.8, -2.1) -- (4.8, -2.6);
   \draw[-, line width=0.5mm] (4.2, -2.7) -- (4.7, -2.7);
   \draw[-, line width=0.5mm] (3.5, -2.7) -- (4, -2.7);
   \draw[-, line width=0.5mm] (2.8, -2.7) -- (3.3, -2.7);
   \draw[-, line width=0.5mm] (2.1, -2.7) -- (2.6, -2.7);
\end{tikzpicture}
\quad \quad
\begin{tikzpicture}
  %% node
  %%%
  \node at (-0.8,0) {${}_{\circ}$}; % 1st floor & 0
  \node at (-0.1,0) {${}_{\circ}$}; % 1st floor & 1
  \node at (0.6,0) {${}_{\circ}$}; % 1st floor & 2
  \node at (1.3,0) {${}_{\circ}$}; % 1st floor & 3
  \node at (2,0) {${}_{\circ}$}; % 1st floor & 4
  \node at (2.7,0) {${}_{\bullet}$}; % 1st floor & 5
  \node at (3.4,0) {${}_{\circ}$}; % 1st floor & 5
  \node at (4.1,0) {${}_{\circ}$}; % 1st floor & 5
  \node at (4.8,0) {${}_{\circ}$}; % 1st floor & 5

  \node at (-0.1,-0.6) {${}_{\circ}$}; % 1st floor & 1
  \node at (0.6,-0.6) {${}_{\circ}$}; % 1st floor & 2
  \node at (1.3, -0.6) {${}_{\bullet}$}; % 2nd floor & 6
  \node at (2, -0.6) {${}_{\circ}$}; % 2nd floor & 7
  \node at (2.7, -0.6) {${}_{\circ}$}; % 2nd floor & 8
  \node at (3.4,-0.6) {${}_{\circ}$}; % 1st floor & 5
  \node at (4.1,-0.6) {${}_{\circ}$}; % 1st floor & 5
  \node at (4.8,-0.6) {${}_{\circ}$}; % 1st floor & 5

  \node at (0.6,-1.3) {${}_{\circ}$}; % 1st floor & 2
  \node at (1.3, -1.3) {${}_{\circ}$}; % 2nd floor & 6
  \node at (2, -1.3) {${}_{\circ}$}; % 3rd floor & 9
  \node at (2.7, -1.3) {${}_{\circ}$}; % 3rd floor & 10
  \node at (3.4, -1.3) {${}_{\bullet}$}; % 3rd floor & 11
  \node at (4.1,-1.3) {${}_{\circ}$}; % 1st floor & 5
  \node at (4.8,-1.3) {${}_{\circ}$}; % 1st floor & 5

  \node at (1.3, -2) {${}_{\circ}$}; % 2nd floor & 6
  \node at (2, -2) {${}_{\circ}$}; % 4th floor & 12
  \node at (2.7, -2) {${}_{\circ}$}; % 4th floor & 13
  \node at (3.4, -2) {${}_{\circ}$}; % 4th floor & 14
  \node at (4.1, -2) {${}_{\circ}$}; % 4th floor & 15
  \node at (4.8, -2) {${}_{\bullet}$}; % 4th floor & 16

  \node at (2, -2.7) {${}_{\bullet}$}; % 5th floor & 17
  \node at (2.7, -2.7) {${}_{\circ}$}; % 5th floor & 17
  \node at (3.4, -2.7) {${}_{\circ}$}; % 5th floor & 18
  \node at (4.1, -2.7) {${}_{\circ}$}; % 5th floor & 19
  \node at (4.8, -2.7) {${}_{\circ}$}; % 5th floor & 20
  
  \node at (2.7, -3.4) {${}_{\circ}$}; % 5th floor & 17
  \node at (3.4, -3.4) {${}_{\circ}$}; % 5th floor & 18
  \node at (4.1, -3.4) {${}_{\bullet}$}; % 6th floor & 22
  \node at (4.8, -3.4) {${}_{\circ}$}; % 6th floor & 23
  
  \node at (3.4, -4.1) {${}_{\circ}$}; % 5th floor & 18
  \node at (4.1, -4.1) {${}_{\circ}$}; % 6th floor & 22
  \node at (4.8, -4.1) {${}_{\circ}$}; % 7th floor & 25
  
  \node at (4.1, -4.8) {${}_{\circ}$}; % 6th floor & 22
  \node at (4.8, -4.8) {${}_{\circ}$}; % 8th floor & 26
  
  \node at (4.8, -5.5) {${}_{\circ}$}; % 8th floor & 26
  %%
  
  %% triple path
  \draw[-, line width=0.5mm] (4.2, 0) -- (4.7, 0);
  \draw[-, line width=0.5mm] (3.5, 0) -- (4, 0);
  \draw[-, line width=0.5mm] (2.8, 0) -- (3.3, 0);
  \draw[-, line width=0.5mm] (2.7, -0.1) -- (2.7, -0.5);
  \draw[-, line width=0.5mm] (2.1, -0.6) -- (2.6, -0.6);
  \draw[-, line width=0.5mm] (1.4, -0.6) -- (1.9, -0.6);
  \draw[-, line width=0.5mm] (1.3, -0.7) -- (1.3, -1.2);
  \draw[-, line width=0.5mm] (1.3, -1.4) -- (1.3, -1.9);
  
  \draw[-, line width=0.5mm] (4.1, -0.1) -- (4.1, -0.5);
  \draw[-, line width=0.5mm] (4.1, -0.7) -- (4.1, -1.2);
  \draw[-, line width=0.5mm] (4.1, -1.4) -- (4.1, -1.9);
  \draw[-, line width=0.5mm] (4.1, -2.1) -- (4.1, -2.6);
  \draw[-, line width=0.5mm] (4.1, -2.8) -- (4.1, -3.3);
  \draw[-, line width=0.5mm] (3.5, -3.4) -- (4, -3.4);
  \draw[-, line width=0.5mm] (3.4, -3.5) -- (3.4, -4);
  
  \draw[-, line width=0.5mm] (4.8, -0.1) -- (4.8, -0.5);
  \draw[-, line width=0.5mm] (4.8, -0.7) -- (4.8, -1.2);
  \draw[-, line width=0.5mm] (4.8, -1.4) -- (4.8, -1.9);
  \draw[-, line width=0.5mm] (4.8, -2.1) -- (4.8, -2.6);
  \draw[-, line width=0.5mm] (4.8, -2.8) -- (4.8, -3.3);
  \draw[-, line width=0.5mm] (4.8, -3.5) -- (4.8, -4);
  \draw[-, line width=0.5mm] (4.2, -4.1) -- (4.7, -4.1);
  \draw[-, line width=0.5mm] (4.1, -4.2) -- (4.1, -4.7);
  
  %% double path
   \draw[-, line width=0.5mm] (2.1, -1.3) -- (2.6, -1.3);
   \draw[-, line width=0.5mm] (2, -1.4) -- (2, -1.9);
   \draw[-, line width=0.5mm] (2, -2.1) -- (2, -2.6);
   \draw[-, line width=0.5mm] (2.7, -1.4) -- (2.7, -1.9);
   \draw[-, line width=0.5mm] (2.7, -2.1) -- (2.7, -2.6);
   \draw[-, line width=0.5mm] (2.7, -2.8) -- (2.7, -3.3);
\end{tikzpicture}
}
\end{equation*}
where the $\bullet$'s in the rightmost path are the dots located at $(4, 1), (1, 4), (2, 6), (6, 2), (5, 5)$ and $(3,3)$, respectively. 
Note that by the condition (iv) in Definition \ref{df:paths in Pn}(2), the triple path in a pair of double and triple paths in $\mathcal{P}_7$, does not pass through the dots located at $(1, 9)$ and $(9, 1)$. 
}
\end{ex}

Put $\Delta = \Delta_{n+3}$.
For ${\bf c} \in \B^{\J}$ and ${\bf p} \in \mathcal{P}_n$ $(n = 6, 7)$, we define
\begin{equation*}
	||{\bf c}||_{\bf p} = 
	\displaystyle \sum_{\text{$\beta$ lying on ${\bf p} \cap \Delta^{\J}$}} c_{\beta},
\end{equation*}
where ${\bf p} \cap \Delta^{\J}$ is the set of the positive roots in $\Delta^{\J}$ lying on the path ${\bf p}$ under the identification of $\Delta^{\J}$ in $\Delta$.

\begin{thm} \label{thm:formula of epsilon star}
{\em
For ${\bf c} \in \B^{\J}$, we have
\begin{equation*}
	\varepsilon_r^* ({\bf c}) 
	= 
	\max \left\{ \, ||{\bf c}||_{\bf p} \,\, | \,\, {\bf p} \in \mathcal{P}_n \, \right\},
\end{equation*}
where ${\mf g}$ is of type $\text{E}_n$ with $n = 6, 7$ and $r$ is given as in Figure \ref{fig:dynkins}.
}
\end{thm}
\begin{proof}
  We will give the proof in Section \ref{sec:proof of theorem}.
\end{proof}

\begin{rem}\label{rem:remark for analog for type E}
{\em Suppose that $\g$ is of type $\text{A}_n$ and $\mf l$ is of type $A_{r}\times A_{s}$ with $r+s=n-1$. The associated crystal $\B^\J$ can be realized as the set of $(r+1)\times (s+1)$ non-negative integral matrices (see \cite[Section 4.3]{Kw18}). 
For $M \in \B^\J$, let $\lambda=(\lambda_1,\lambda_2,\ldots)$ be the shape of the tableaux corresponding to $M$ under RSK correspondence (cf. \cite{Ful}).
It was a well-known result due to Greene \cite{G} that $\lambda_1$ is a maximal sum of entries in $M$ lying on a lattice path on $(r+1)\times (s+1)$ array of points from northeast to southwest, which coincides with $\varepsilon_r^*(M)$.

Similarly, for type $\text{D}_n$ with $r = n$ (in this case, ${\mf l}$ is of type $A_{n-1}$), the associated crystal $\B^\J$ can be realized as the set of the strictly upper triangular $n \times n$ non-negative integral matrices (see \cite[Section 3.2]{JK19}). For $M \in \B^\J$, let $\lambda=(\lambda_1,\lambda_2,\ldots)$ be the shape of the tableaux corresponding to $M$ under Burge correspondence (see \cite{Bur}, cf.~\cite[Section 4.2]{JK19}), which is an analog of RSK for type $\text{D}$. Then we proved in \cite{JK19} that $\lambda_1$ is a maximal sum of entries in $M$ lying on a non-intersecting double path defined on $(n-1) \times (n-1)$ array of points (see \cite[Definition 3.10]{JK19} for definition of double path), which also coincides with $\varepsilon_n^*(M)$.

In this sense, we expect that there exists an analog of RSK for types $\text{E}_6$ and $\text{E}_7$ to explain the relationship between our model and a tableau model. Then, the formula in Theorem \ref{thm:formula of epsilon star} may be viewed as a non-trivial type $\text{E}$ analog of the previous results for $\varepsilon_r^*$.
}
\end{rem}

\begin{rem} \label{rem:polytope}
{\em 
By Theorem \ref{thm:formula of epsilon star}, we have
\begin{equation*}
	\B^{\J, s} = \bigcap_{{\bf p}} \left\{ \, {\bf c} \in \B^{\J} \, \big| \, ||{\bf c}||_{\bf p} \le s \, \right\},
\end{equation*}
where ${\bf p}$ runs over the triple or quadruple paths in $\Delta$. Combining Theorem \ref{thm:main theorem}, this gives a polytope realization of the KR crystal $B^{r, s}$ of types $\text{E}_{6}^{(1)}$ and $\text{E}_{7}^{(1)}$ with the minuscule nodes $r$ in Figure \ref{fig:dynkins}, which may be viewed as the type $\text{E}$ analog of \cite{JK19}.
}
\end{rem}

\subsection{Proof of Theorem \ref{thm:formula of epsilon star}} \label{sec:proof of theorem}
We prove Theorem \ref{thm:formula of epsilon star} following \cite[Section 5.2]{JK19}.
We refer the reader to \cite[Section 5.1]{JK19} for the brief review of the formula of the transition matrix between Lusztig's parametrization and string parametrization of $B(\infty)$ \cite{BZ}, which is crucial to prove Theorem \ref{thm:formula of epsilon star}.
We keep the notations in \cite[Section 5.1]{JK19}.
\vskip 2mm

\subsubsection{A formula of $\varepsilon_r^*$}
Let ${\bf i}_0$ be the reduced expression given in \eqref{eq:i_0} with ${\bf i}^{\J} = (i_1, \dots, i_M)$ and ${\bf i}_{\J} = (i_{M+1}, \dots, i_N)$, where $M$ and $N$ are given by
\begin{equation*}
	(M, N) = 
	\begin{cases}
		(16, 36) & \text{if ${\mf g}$ is of type $\text{E}_6$,} \\
		(27, 63) & \text{if ${\mf g}$ is of type $\text{E}_7$.}
	\end{cases}
\end{equation*}
If ${\mf g}$ is of type $\text{E}_6$, then $
	1^* = 6,\, 2^* = 2,\, 3^* = 5,\, 4^* = 4,\, 5^* = 3,\, 6^* = 1$.
If ${\mf g}$ is of type $\text{E}_7$, we have $i^* = i$ for all $i \in I$.

Put
\begin{equation} \label{eq:j_0}
	{\bf j}_0 = (j_1, \dots, j_N) := {\bf i}_0^{*{\rm op}} = (i_N^*, \dots, i_1^*).
\end{equation}

\begin{rem} \label{rem:identification}
{\em 
Since the representation $V(\varpi_r)$ of $U_q({\mf g})$ is minuscule, we can identify a ${\bf j}_0$-trail $\pi = (\nu_0, \dots, \nu_N)$ in $V(\varpi_n)$ with a sequence $(b_0, \dots, b_N) \in B(\varpi_r)^{\times (N+1)}$ such that ${\rm wt}(b_k) = \nu_k$ and $\tf_{j_k}^{d_k(\pi)} b_{k-1} = b_k$ with $d_k(\pi) = 0$ or $1$ for $1 \le k \le N$. 
Throughout this section, we frequently use this identification.
}
\end{rem}

\begin{lem} \label{lem:trail from wr to w0wr}
{\em 
\mbox{} \

\begin{enumerate}
	\item There exists a unique ${\bf j}_0$-trail $\pi$ from $\varpi_r$ to $w_0\varpi_r$ such that $d_k(\pi) = 0$ for $1 \le k \le N-M$, and $d_k(\pi) = 1$ otherwise.
	\vskip 1mm
	
	\item There exists a unique $(j_1, \dots, j_{N-M})$-trail from $\varpi_r - \alpha_r$ to $\varpi_r - \theta$. 
We denote it by $(\tilde{\nu}_0, \dots, \tilde{\nu}_{N-M})$
\end{enumerate}
}
\end{lem}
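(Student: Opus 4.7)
The plan is to reduce both parts to combinatorial checks in the minuscule crystal $B(\varpi_r)$. Since $V(\varpi_r)$ is minuscule, Remark \ref{rem:identification} forces every coefficient $d_k(\pi)$ of a ${\bf j}_0$-trail to lie in $\{0,1\}$, and once the sequence $(d_k)$ is fixed, the trail (if it exists) is uniquely determined by the recursion $b_k = \tf_{j_k}^{d_k} b_{k-1}$ with $b_0$ the highest weight element of $B(\varpi_r)$. In particular, uniqueness in Part (1) is immediate because the $d_k$'s are prescribed.

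For existence in Part (1), I would first verify that $\sum_{k=N-M+1}^{N}\alpha_{j_k} = \varpi_r - w_0\varpi_r$ by a direct computation from the explicit reduced expression ${\bf i}^{\J*\mathrm{op}}$ in Proposition \ref{prop:i^J}, and then show that each intermediate weight $\nu_k = \varpi_r - \sum_{l=N-M+1}^{k}\alpha_{j_l}$ lies in $V(\varpi_r)$ for $N-M \le k \le N$. The latter reduces to proving that $(j_{N-M+1},\ldots,j_N)$ traces a descending path in $B(\varpi_r)$, i.e., $\langle \nu_{k-1},\alpha_{j_k}^\vee\rangle = 1$ at every step so that $\tf_{j_k}$ does not vanish. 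This is a finite combinatorial check, which I would carry out directly from the explicit words in Proposition \ref{prop:i^J} together with the Cartan data for $\text{E}_6$ and $\text{E}_7$.

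For Part (2), the subword $(j_1,\ldots,j_{N-M}) = {\bf i}_{\J^\star}^{*\mathrm{op}}$ is a reduced expression of $(w_{\J^\star})^* = w_\J$, the longest element of the Levi Weyl subgroup $W_\J$, and consists only of simple reflections indexed by $\J$. The corresponding operators $\tf_{j_k}$ therefore act entirely within the $\mf{l}$-isotypic component of $V(\varpi_r)|_{\mf{l}}$ generated by the weight vector of $\varpi_r - \alpha_r$. Since $V(\varpi_r)$ is minuscule, this $\mf{l}$-submodule is itself minuscule, with $\mf{l}$-highest weight $\varpi_r - \alpha_r$ (note $\langle \varpi_r-\alpha_r, \alpha_i^\vee\rangle \ge 0$ for all $i\in\J$) and $\mf{l}$-lowest weight $w_\J(\varpi_r - \alpha_r) = \varpi_r - w_\J\alpha_r = \varpi_r - \theta$. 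Here we use the standard identity $w_\J\alpha_r = \theta$ for minuscule $\varpi_r$: the $W_\J$-orbit of $\alpha_r$ consists of all positive roots of $\g$ with $\alpha_r$-coefficient $1$, whose unique maximum is $\theta$. Part (2) thus reduces to showing that the reduced expression $(j_1,\ldots,j_{N-M})$ of $w_\J$ yields a unique trail in the minuscule $\mf{l}$-module $V_{\mf{l}}(\varpi_r-\alpha_r)$ from its highest weight $\varpi_r - \alpha_r$ to its lowest weight $\varpi_r - \theta$ — namely, the greedy trail where $d_k = 1$ precisely when $\tf_{j_k}$ is applicable at the current state, which is a standard consequence of minusculeness together with the reducedness of the expression.

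The main obstacle is the non-vanishing verification: that the specific reduced expressions from Proposition \ref{prop:i^J} really trace descending paths in the corresponding minuscule crystals. While minusculeness and the total-weight identities guarantee the correct overall weight drop, the order of colors along the word matters, and one must confirm $\langle \nu_{k-1},\alpha_{j_k}^\vee\rangle = 1$ at each step. I expect the cleanest route is explicit case-by-case verification for the three cases ($\text{E}_6$ with $r = 1, 6$ and $\text{E}_7$ with $r = 7$), guided by the combinatorial structure of $B(\varpi_r)$ coming from the explicit reduced expressions.
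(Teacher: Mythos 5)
Your proposal is correct and takes essentially the same route as the paper, whose proof simply reduces the lemma to a direct inspection of the minuscule crystal $B(\varpi_r)$ and the explicit word ${\bf j}_0$ under the identification in Remark \ref{rem:identification}; your step-by-step non-vanishing criterion $\langle \nu_{k-1},\alpha_{j_k}^\vee\rangle=1$ is exactly that inspection made explicit. Your organization of part (2) via the Levi decomposition of $V(\varpi_r)|_{\mf l}$ and the identity $w_{\J}\alpha_r=\theta$ is a sound (and slightly more conceptual) way to package the finite check the paper leaves implicit.
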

\begin{proof}
The proof is straightforward by considering the structure of the crystal $B(\varpi_r)$ (cf. \cite[Figures 2 and 3]{JS}) and ${\bf j}_0$ \eqref{eq:j_0} under the identification in Remark \ref{rem:identification}.
\end{proof}

Let $\mathcal{T}$ be the set of ${\bf j}_0$-trails from $s_r \varpi_r$ to $w_0 \varpi_r$ in $V(\varpi_r)$.
The following formula plays an important role in the proof of Theorem \ref{thm:formula of epsilon star}.

\begin{lem} \label{lem:formula of epsilon star-1}
{\em 
For ${\bf c} = (c_{\beta_k}) \in \B^{\J}$, we have
\begin{equation*}
	\varepsilon_{r}^* ({\bf c}) = \max \{\, ||{\bf c}||_{\pi} \, | \, \pi \in \mathcal{T} \,\},
\end{equation*}
where $||{\bf c}||_{\pi}$ is defined by 
\begin{equation} \label{eq:statistic of c by pi}
	||{\bf c}||_{\pi}
	=
	\sum_{k=1}^M (1-d_{N-k+1}(\pi))c_{\beta_k}.
\end{equation}
}
\end{lem}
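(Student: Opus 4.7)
My plan is to derive the formula as a direct specialization of the Berenstein--Zelevinsky transition formula for $\varepsilon_r^*$ recalled in [JK19, Section 5.1], combined with the support hypothesis ${\bf c}\in\B^{\J}$. Concretely, the BZ formula, applied to ${\bf i}_0$-Lusztig parameters and to ${\bf j}_0$-trails in the minuscule representation $V(\varpi_r)$, should be rewritten in the form
\begin{equation*}
  \varepsilon_r^*({\bf c}) \;=\; \max_{\pi \in \mathcal{T}}\; \sum_{k=1}^{N}\,\bigl(1 - d_{N-k+1}(\pi)\bigr)\, c_{\beta_k},
\end{equation*}
where $\mathcal{T}$ is the set of ${\bf j}_0$-trails from $s_r\varpi_r$ to $w_0\varpi_r$ in $V(\varpi_r)$. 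This is the type-E analogue of the calibrated BZ formula used in [JK19, proof of Lemma 5.3]; the coefficient $1-d_{N-k+1}$, rather than $d_{N-k+1}$, encodes (i) the index reversal intrinsic to ${\bf j}_0 = {\bf i}_0^{*\mathrm{op}}$ and (ii) the $*$-twist relating $\varepsilon_r^*$-data to ordinary $\varepsilon_r$-data.

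Once this rewriting is in hand, the lemma follows immediately. The hypothesis ${\bf c}\in\B^{\J}$ forces $c_{\beta_k}=0$ whenever $\beta_k\in\Phi^+_{\J}$, i.e.~for all $k>M$, so the displayed sum collapses to $\sum_{k=1}^{M}(1-d_{N-k+1}(\pi))\,c_{\beta_k}=\|{\bf c}\|_\pi$, yielding the claim.

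The principal technical task is therefore the verification of the BZ rewriting, and its main obstacle is the careful calibration of the formula. Lemma \ref{lem:trail from wr to w0wr} is what makes this calibration explicit: part (1) supplies a canonical ${\bf j}_0$-trail $\pi_0$ from $\varpi_r$ to $w_0\varpi_r$ with $d_k(\pi_0)=0$ for $k\le N-M$ and $d_k(\pi_0)=1$ otherwise, against which an arbitrary $\pi\in\mathcal{T}$ is compared; the complement $1-d_{N-k+1}(\pi)$ emerges by measuring $\pi$ against $\pi_0$ on the nilradical positions $N-M+1,\ldots,N$ of ${\bf j}_0$ (i.e., the positions occupied by $({\bf i}^{\J})^{*\mathrm{op}}$). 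Part (2) then ensures that the initial Levi segment of any such trail is rigid, so positions $1,\ldots,N-M$ contribute no new degrees of freedom, in agreement with the range of summation in $\|{\bf c}\|_\pi$. This parallels closely the bookkeeping carried out for types A and D in [JK19, Section 5.2], which is the blueprint we follow.
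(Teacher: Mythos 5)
Your skeleton---apply the Berenstein--Zelevinsky trail formula for $\varepsilon_r^*$ with respect to ${\bf i}_0$ and ${\bf j}_0$, calibrate it via Lemma \ref{lem:trail from wr to w0wr}(1), and run the argument of \cite[Lemma 5.4]{JK19}---is exactly the paper's proof. However, the specific derivation you sketch has a false step. The intermediate claim that for \emph{arbitrary} Lusztig data the BZ formula rewrites as $\varepsilon_r^*({\bf c})=\max_{\pi\in\mathcal{T}}\sum_{k=1}^{N}\bigl(1-d_{N-k+1}(\pi)\bigr)c_{\beta_k}$, so that the lemma follows by pure support collapse, is not correct. Test it in the analogous type $\text{A}_2$ situation with $r=1$, ${\bf i}_0=(1,2,1)$, ${\bf j}_0=(2,1,2)$: there $\mathcal{T}$ consists of the two trails with exponent sequences $(1,0,0)$ and $(0,0,1)$, so your full-sum formula reads $\max\{c_{\beta_1}+c_{\beta_2},\,c_{\beta_2}+c_{\beta_3}\}$, whereas the true value is $\max\{c_{\beta_1}+c_{\beta_2}-c_{\beta_3},\,c_{\beta_2}\}$; at ${\bf c}=(0,0,1)$ (the element $f_2$) your formula returns $1$ instead of $0$. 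The underlying issue is that BZ's trail coefficients are $\tfrac12\langle\nu_{k-1}+\nu_k,\alpha_{j_k}^\vee\rangle$, not the exponents $d_k(\pi)$ of Remark \ref{rem:identification}; the identification ``coefficient $=1-d_{N-k+1}(\pi)$'' is something to be \emph{proved}, and it is valid only on the nilradical block $k\le M$. That is precisely the calibration carried out in \cite[Lemma 5.4]{JK19} using the distinguished trail of Lemma \ref{lem:trail from wr to w0wr}(1), and it is where the hypothesis ${\bf c}\in\B^{\J}$ genuinely enters---not merely as a device for deleting harmless extra terms from an already valid identity.

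A smaller but real slip: Lemma \ref{lem:trail from wr to w0wr}(2) does not make the initial Levi segment of a trail in $\mathcal{T}$ rigid. Trails in $\mathcal{T}$ may distribute their unit steps freely between the first $N-M$ and the last $M$ positions (already in the $\text{A}_2$ example the two trails differ in position $1\le N-M$); restricting to trails whose Levi segment equals $(\tilde\nu_0,\dots,\tilde\nu_{N-M})$ is the content of the subsequent reduction from $\mathcal{T}$ to $\mathcal{T}^{\,'}$ in Lemma \ref{lem:reduction}, which is where part (2) is actually used. For the present lemma only part (1) is needed.
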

\begin{proof}
By using Lemma \ref{lem:trail from wr to w0wr}(1), the proof follows from the same argument in the one of \cite[Lemma 5.4]{JK19}, where the formula of $\varepsilon_r^*({\bf c})$ is obtained from the one of Berenstein-Zelevinsky \cite{BZ} with respect to the reduced expressions ${\bf i}_0$ \eqref{eq:i_0} and ${\bf j}_0$ \eqref{eq:j_0}.
\end{proof}

Let us characterize the ${\bf j}_0$-trails $\pi \in \mathcal{T}$ such that $||{\bf c}||_{\pi}$ may be maximal for ${\bf c} \in \B^{\J}$.
For $\pi = (\nu_0, \dots, \nu_N) \in \mathcal{T}$, put
$\pi_{\J} = (\nu_0, \dots, \nu_{N-M})$ and $\pi^{\J} = (\nu_{N-M+1}, \dots, \nu_N)$.
We define 
\begin{equation*}
	\mathcal{T}^{\,'}
	=
	\{\, \pi \in \mathcal{T} \, | \, \pi_{\J} = (\tilde{\nu}_0, \dots, \tilde{\nu}_{N-M}) \,\},
\end{equation*}
where $(\tilde{\nu}_0, \dots, \tilde{\nu}_{N-M})$ is the trail as in Lemma \ref{lem:trail from wr to w0wr}(2).

\begin{lem} \label{lem:reduction}
{\em 
For ${\bf c} = (c_{\beta_k}) \in \B^{\J}$, we have
\begin{equation*}
	\varepsilon_{r}^* ({\bf c}) = \max \{\, ||{\bf c}||_{\pi} \, | \, \pi \in \mathcal{T}^{\,'} \,\}.
\end{equation*}
}
\end{lem}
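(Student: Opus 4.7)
The plan is to exploit the fact that the statistic $||{\bf c}||_\pi$ in~\eqref{eq:statistic of c by pi} depends only on the values $d_{N-M+1}(\pi),\ldots,d_N(\pi)$, i.e.\ only on the tail $\pi^\J$ of the trail (together with its starting weight $\nu_{N-M}$). Thus, if $\pi, \pi' \in \mathcal{T}$ share the same tail, then $||{\bf c}||_\pi = ||{\bf c}||_{\pi'}$, and in particular any trail whose head equals the canonical $(\tilde\nu_0,\ldots,\tilde\nu_{N-M})$ from Lemma~\ref{lem:trail from wr to w0wr}(2) suffices to witness the maximum in Lemma~\ref{lem:formula of epsilon star-1}.

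The key step is to show that every $\pi \in \mathcal{T}$ automatically lies in $\mathcal{T}^{\,'}$. The first $N-M$ letters of ${\bf j}_0$, namely $(j_1,\ldots,j_{N-M}) = (i_N^*,\ldots,i_{M+1}^*)$, all belong to $\J$ because the Dynkin involution $*$ preserves $\J$. Hence $\pi_\J$ is a $(j_1,\ldots,j_{N-M})$-trail in $V(\varpi_r)$ that starts from $s_r\varpi_r$ and uses only letters in $\J$. Since $\varpi_r$ is minuscule, all weight multiplicities of $B(\varpi_r)$ are one and such a trail can be identified with a sequence of crystal elements (Remark~\ref{rem:identification}); consequently $\nu_{N-M}$ lies in the $\J$-component of $B(\varpi_r)$ containing $s_r\varpi_r$. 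A direct weight-drop count using the tail letters $(j_{N-M+1},\ldots,j_N) = (i_M^*,\ldots,i_1^*)$ (the reversal of ${\bf i}^\J$ with $*$) and the requirement $\nu_N = w_0\varpi_r$ then forces $\nu_{N-M} = \varpi_r - \theta$. Combined with the uniqueness in Lemma~\ref{lem:trail from wr to w0wr}(2), this yields $\pi_\J = (\tilde\nu_0,\ldots,\tilde\nu_{N-M})$, i.e.\ $\pi \in \mathcal{T}^{\,'}$. Thus $\mathcal{T} = \mathcal{T}^{\,'}$, and the lemma follows from Lemma~\ref{lem:formula of epsilon star-1}.

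The main obstacle will be the weight-drop count pinning down $\nu_{N-M} = \varpi_r - \theta$. This is a combinatorial fact about the minuscule crystal $B(\varpi_r)$ for $\text{E}_6$ and $\text{E}_7$, and it can be handled either by direct inspection of $B(\varpi_r)$ (as in~\cite{JS,BS}) or by adapting the type $\text{D}$ argument of~\cite[Section 5.2]{JK19}, with the double paths used there replaced by the triple and quadruple paths of Definition~\ref{df:paths}. If it turns out that some $\pi \in \mathcal{T}$ has $\nu_{N-M} \neq \varpi_r - \theta$, the fallback is to produce $\pi' \in \mathcal{T}^{\,'}$ with $(\pi')^\J = \pi^\J$ by invoking the first paragraph's observation, which would still yield the desired equality of maxima.
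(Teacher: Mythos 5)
Your central claim --- that every $\pi \in \mathcal{T}$ automatically lies in $\mathcal{T}^{\,'}$, so that $\mathcal{T} = \mathcal{T}^{\,'}$ --- is false, and this is the crux of the gap. The head word $(j_1,\dots,j_{N-M})$ does use only letters of $\J$ (though not because $*$ preserves $\J$; it does not, rather $*$ maps $\J^{\star}$ onto $\J$), but nothing forces the head of a trail to descend all the way to $\varpi_r-\theta$. Concretely, in type $\text{E}_6$ with $r=6$, consider the trail whose head is constant ($d_k(\pi)=0$ for all $k\le N-M$, so $\nu_{N-M}=\varpi_6-\alpha_6$) and whose tail skips only the first letter: starting from $\tf_6 u_{\varpi_6}$ and successively applying $\tf_5,\tf_4,\tf_3,\tf_1,\tf_2,\tf_4,\tf_3,\tf_5,\tf_4,\tf_2,\tf_6,\tf_5,\tf_4,\tf_3,\tf_1$, one checks in the minuscule crystal $B(\varpi_6)$ that every step is nonzero, that the weight $\varpi_6-\theta$ is reached after ten steps, and that the fifteenth step lands on the lowest weight $w_0\varpi_6$. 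This is a valid element of $\mathcal{T}$ not in $\mathcal{T}^{\,'}$. Your fallback also fails for such a trail: its tail carries fifteen down-steps, whereas every trail in $\mathcal{T}^{\,'}$ carries exactly five in its tail (the canonical head already accounts for the other ten), so no $\pi'\in\mathcal{T}^{\,'}$ shares its tail, and the observation of your first paragraph cannot be invoked.

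What the lemma actually requires is an inequality, not an equality of statistics. The paper's argument is: for $\pi\in\mathcal{T}\setminus\mathcal{T}^{\,'}$, the set of tail positions with $d_k(\pi)=1$ necessarily contains a subsequence spelling the word \eqref{eq:minimal seq}; the trail $\pi'$ whose head is the canonical one of Lemma \ref{lem:trail from wr to w0wr}(2) and whose tail has $d=1$ exactly on that subsequence lies in $\mathcal{T}^{\,'}$; and since the $1$'s of $\pi'$ form a subset of those of $\pi$, formula \eqref{eq:statistic of c by pi} together with $c_{\beta_k}\ge 0$ gives $||{\bf c}||_{\pi}\le ||{\bf c}||_{\pi'}$. (In the example above, $||{\bf c}||_{\pi}=c_{\beta_{16}}$ while the corresponding $\pi'$ is the last trail in the paper's list and gives a larger sum.) This monotonicity step is the substance of the proof and is absent from your proposal.
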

\begin{proof}
Let $\pi = (\nu_0, \dots, \nu_N) \in \mathcal{T} \, \setminus \, \mathcal{T}^{\,'}$ be given. It is enough to show that there exists $\pi' \in \mathcal{T}^{\,'}$ such that $||{\bf c}||_{\pi} \le ||{\bf c}||_{\pi'}$. 
To do this, let us consider the maximal subsequence $(j_{q_1}, \dots, j_{q_{N'}})$ of $(j_{N-M+1}, \dots, j_{N})$ such that $k_{q_k}(\pi) = 1$ for $1 \le k \le N'$.
Since $\pi \notin \mathcal{T}^{\,'}$, by considering the structure of the crystal $B(\varpi_r)$ (cf. \cite[Figures 2 and 3]{JS}), the sequence $(j_{q_1}, \dots, j_{q_{N'}})$ contains a subsequence $(j_{r_1}, \dots, j_{r_{N''}})$ given by
\begin{equation} \label{eq:minimal seq}
	\begin{cases}
		(1, 3, 4, 5, 6) & \text{for type $\text{E}_6$ with $r = 1$,} \\
		(6, 5, 4, 3, 1) & \text{for type $\text{E}_6$ with $r = 6$,} \\
		(7, 6, 5, 4, 3, 2, 4, 5, 6, 7) & \text{for type $\text{E}_7$ with $r = 7$,}
	\end{cases}
\end{equation}
(up to $2$-term braid moves on $2$ and $3$ in type $\text{E}_7$). 
Then we define a ${\bf j}_0$-trail $\pi'$ such that $\pi_{\J}' = (\tilde{\nu}_0, \dots, \tilde{\nu}_{N-M})$ and 
\begin{equation*}
	d_k(\pi') = 
	\begin{cases}
		1 & \text{if $k = r_l$ for some $1 \le l \le N''$,} \\
		0 & \text{otherwise.}
	\end{cases}
\end{equation*}
Then $\pi' \in \mathcal{T}^{\,'}$ and we have $||{\bf c}||_{\pi} \le ||{\bf c}||_{\pi'}$ by \eqref{eq:statistic of c by pi} and the construction of $\pi'$.
\end{proof}

\subsubsection{The ${\bf j}_0$-trails in $\mathcal{T}^{\,'}$}\label{subsec:j0-trails}
This subsection will be devoted to give the complete list of the ${\bf j}_0$-trails in $\mathcal{T}^{\,'}$.
For $\pi \in \mathcal{T}^{\,'}$, since $\pi_{\J}$ is fixed by $(\tilde{\nu}_0, \dots, \tilde{\nu}_{N-M})$, we identify ${\bf \pi}$ with the sequence $(d_k(\pi))_{N-M+1 \le k \le N}$, where $d_k(\pi)$ is $0$ or $1$ (see Remark \ref{rem:identification}).
For convenience, we also identify the sequence $(d_k(\pi))_{N-M+1 \le k \le N}$ with a sequence given as in the following example.
\begin{ex} \label{ex:notation for trail}
{\em 
For type $\text{E}_6$ with $r = 6$, let $\pi \in \mathcal{T}^{\,'}$ be given. In this case, the subsequence $({\bf i}^{\J*})^{\rm op}$ of ${\bf j}_0$ is given by 
\begin{equation*}
	(6,\, 5,\, 4,\, 3,\, 1,\, 2,\, 4,\, 3,\, 5,\, 4,\, 2,\, 6,\, 5,\, 4,\, 3,\, 1)
\end{equation*}
Then we rewrite the $k$-th number from left to right such that $d_k(\pi) = 1$ (resp. $d_k(\pi) = 0$) as the bold (resp. gray) one.
If $(d_k(\pi))_{21 \le k \le 36}$ for $\pi$ is given by $(1, 0, 0, 0, 0, 0, 0, 0, 1, 1, 0, 0, 0, 0, 1, 1)$, then we identify it with
\begin{equation*}
({\bf 6},\, {\color{gray} 5},\, {\color{gray} 4},\, {\color{gray} 3},\, {\color{gray} 1},\, {\color{gray} 2},\, {\color{gray} 4},\, {\color{gray} 3},\, {\bf 5},\, {\bf 4},\, {\color{gray} 2},\, {\color{gray} 6},\, {\color{gray} 5},\, {\color{gray} 4},\, {\bf 3},\, {\bf 1}).
\end{equation*}
}
\end{ex}

\begin{rem} \label{rem:connectedness}
{\em 
Let $\pi_0$ be the ${\bf j}_0$-trail in $\mathcal{T}^{\,'}$ corresponding to
\begin{equation*}
	\begin{cases}
		({\bf 6},\, {\bf 5},\, {\bf 4},\, {\bf 3},\, {\bf 1},\, {\color{gray} 2},\, {\color{gray} 4},\, {\color{gray} 3},\, {\color{gray} 5},\, {\color{gray} 4},\, {\color{gray} 2},\, {\color{gray} 6},\, {\color{gray} 5},\, {\color{gray} 4},\, {\color{gray} 3},\, {\color{gray} 1}) & \text{for type $\text{E}_6$,} \\
		({\bf 7}, {\bf 6}, {\bf 5}, {\bf 4}, {\bf 2}, {\bf 3}, {\bf 4}, {\bf 5}, {\bf 6}, {\bf 7}, {\color{gray}  1}, {\color{gray} 3}, {\color{gray} 4}, {\color{gray} 5}, {\color{gray} 6}, {\color{gray} 2}, {\color{gray} 4}, {\color{gray} 5}, {\color{gray} 3}, {\color{gray} 4}, {\color{gray} 2}, {\color{gray} 1}, {\color{gray} 3}, {\color{gray} 4}, {\color{gray} 5}, {\color{gray} 6}, {\color{gray} 7}) & \text{for type $\text{E}_7$,}
	\end{cases}
\end{equation*}
and let $\pi_0'$ be the ${\bf j}_0$-trail in $\mathcal{T}^{\,'}$ corresponding to 
\begin{equation*}
	\begin{cases}
		({\color{gray} 6},\, {\color{gray} 5},\, {\color{gray} 4},\, {\color{gray} 3},\, {\color{gray} 1},\, {\color{gray} 2},\, {\color{gray} 4},\, {\color{gray} 3},\, {\color{gray} 5},\, {\color{gray} 4},\, {\color{gray} 2},\, {\bf 6},\, {\bf 5},\, {\bf 4},\, {\bf 3},\, {\bf 1}) & \text{for type $\text{E}_6$,} \\
		({\color{gray}  7}, {\color{gray}  6}, {\color{gray}  5}, {\color{gray}  4}, {\color{gray}  2}, {\color{gray}  3}, {\color{gray}  4}, {\color{gray}   5}, {\color{gray}   6}, {\bf  7}, {\color{gray}  1}, {\color{gray}  3}, {\color{gray}   4}, {\color{gray}   5}, {\bf 6}, {\color{gray}  2}, {\color{gray} 4}, {\bf 5}, {\color{gray}  3}, {\bf 4}, {\bf 2}, {\color{gray} 1}, {\bf 3}, {\bf 4}, {\bf 5}, {\bf 6}, {\bf 7}) & \text{for type $\text{E}_7$.}
	\end{cases}
\end{equation*}
Then any ${\bf j}_0$-trail $\pi$ in $\mathcal{T}^{\,'}$ is obtained from $\pi_0$ (resp. $\pi_0'$) by moving the bold numbers to the right (resp. left) successively without a change of the enumeration on them (up to $2$-term braid moves on $2$ and $3$ for type $\text{E}_7$).
}
\end{rem}

Now, we state the complete list of the ${\bf j}_0$-trails in $\mathcal{T}^{\,'}$ as follows.
\vskip 1mm

{\it Case 1.} Type $\text{E}_6$. We consider the case of $r = 6$ only, since the case of $r =1$ is almost identical.
{\allowdisplaybreaks \footnotesize
\begin{gather*}
	({\bf 6},\, {\bf 5},\, {\bf 4},\, {\bf 3},\, {\bf 1},\, {\color{gray} 2},\, {\color{gray} 4},\, {\color{gray} 3},\, {\color{gray} 5},\, {\color{gray} 4},\, {\color{gray} 2},\, {\color{gray} 6},\, {\color{gray} 5},\, {\color{gray} 4},\, {\color{gray} 3},\, {\color{gray} 1}), \\
	({\bf 6},\, {\bf 5},\, {\bf 4},\, {\bf 3},\, {\color{gray} 1},\, {\color{gray} 2},\, {\color{gray} 4},\, {\color{gray} 3},\, {\color{gray} 5},\, {\color{gray} 4},\, {\color{gray} 2},\, {\color{gray} 6},\, {\color{gray} 5},\, {\color{gray} 4},\, {\color{gray} 3},\, {\bf 1}), \\
	({\bf 6},\, {\bf 5},\, {\bf 4},\, {\color{gray} 3},\, {\color{gray} 1},\, {\color{gray} 2},\, {\color{gray} 4},\, {\bf 3},\, {\color{gray} 5},\, {\color{gray} 4},\, {\color{gray} 2},\, {\color{gray} 6},\, {\color{gray} 5},\, {\color{gray} 4},\, {\color{gray} 3},\, {\bf 1}), \\
	({\bf 6},\, {\bf 5},\, {\color{gray} 4},\, {\color{gray} 3},\, {\color{gray} 1},\, {\color{gray} 2},\, {\bf 4},\, {\bf 3},\, {\color{gray} 5},\, {\color{gray} 4},\, {\color{gray} 2},\, {\color{gray} 6},\, {\color{gray} 5},\, {\color{gray} 4},\, {\color{gray} 3},\, {\bf 1}), \\
	({\bf 6},\, {\bf 5},\, {\bf 4},\, {\color{gray} 3},\, {\color{gray} 1},\, {\color{gray} 2},\, {\color{gray} 4},\, {\color{gray} 3},\, {\color{gray} 5},\, {\color{gray} 4},\, {\color{gray} 2},\, {\color{gray} 6},\, {\color{gray} 5},\, {\color{gray} 4},\, {\bf 3},\, {\bf 1}), \\
	({\bf 6},\, {\bf 5},\, {\color{gray} 4},\, {\color{gray} 3},\, {\color{gray} 1},\, {\color{gray} 2},\, {\bf 4},\, {\color{gray} 3},\, {\color{gray} 5},\, {\color{gray} 4},\, {\color{gray} 2},\, {\color{gray} 6},\, {\color{gray} 5},\, {\color{gray} 4},\, {\bf 3},\, {\bf 1}), \\
	({\bf 6},\, {\bf 5},\, {\color{gray} 4},\, {\color{gray} 3},\, {\color{gray} 1},\, {\color{gray} 2},\, {\color{gray} 4},\, {\color{gray} 3},\, {\color{gray} 5},\, {\bf 4},\, {\color{gray} 2},\, {\color{gray} 6},\, {\color{gray} 5},\, {\color{gray} 4},\, {\bf 3},\, {\bf 1}), \\
	({\bf 6},\, {\color{gray} 5},\, {\color{gray} 4},\, {\color{gray} 3},\, {\color{gray} 1},\, {\color{gray} 2},\, {\color{gray} 4},\, {\color{gray} 3},\, {\bf 5},\, {\bf 4},\, {\color{gray} 2},\, {\color{gray} 6},\, {\color{gray} 5},\, {\color{gray} 4},\, {\bf 3},\, {\bf 1}), \\
	({\bf 6},\, {\bf 5},\, {\color{gray} 4},\, {\color{gray} 3},\, {\color{gray} 1},\, {\color{gray} 2},\, {\color{gray} 4},\, {\color{gray} 3},\, {\color{gray} 5},\, {\color{gray} 4},\, {\color{gray} 2},\, {\color{gray} 6},\, {\color{gray} 5},\, {\bf 4},\, {\bf 3},\, {\bf 1}), \\
		({\bf 6},\, {\color{gray} 5},\, {\color{gray} 4},\, {\color{gray} 3},\, {\color{gray} 1},\, {\color{gray} 2},\, {\color{gray} 4},\, {\color{gray} 3},\, {\bf 5},\, {\color{gray} 4},\, {\color{gray} 2},\, {\color{gray} 6},\, {\color{gray} 5},\, {\bf 4},\, {\bf 3},\, {\bf 1}), \\
	({\bf 6},\, {\color{gray} 5},\, {\color{gray} 4},\, {\color{gray} 3},\, {\color{gray} 1},\, {\color{gray} 2},\, {\color{gray} 4},\, {\color{gray} 3},\, {\color{gray} 5},\, {\color{gray} 4},\, {\color{gray} 2},\, {\color{gray} 6},\, {\bf 5},\, {\bf 4},\, {\bf 3},\, {\bf 1}), \\
	({\color{gray} 6},\, {\color{gray} 5},\, {\color{gray} 4},\, {\color{gray} 3},\, {\color{gray} 1},\, {\color{gray} 2},\, {\color{gray} 4},\, {\color{gray} 3},\, {\color{gray} 5},\, {\color{gray} 4},\, {\color{gray} 2},\, {\bf 6},\, {\bf 5},\, {\bf 4},\, {\bf 3},\, {\bf 1}).
\end{gather*}}
\vskip 2mm

{\it Case 2.} Type $\text{E}_7$. In this case, since the subsequence \eqref{eq:minimal seq} involves both $2$ and $3$ which commutes, we separate two cases along the position of them.
\vskip 2mm

{\it Sub-Case 1}. (7, 6, 5, 4, {\bf 2}, {\bf 3}, 4, 5, 6, 7).
{\allowdisplaybreaks \footnotesize 
\begin{gather*}
	({\bf 7}, {\bf 6}, {\bf 5}, {\bf 4}, {\bf 2}, {\bf 3}, {\bf 4}, {\bf 5}, {\bf 6}, {\bf 7}, {\color{gray}  1}, {\color{gray} 3}, {\color{gray} 4}, {\color{gray} 5}, {\color{gray} 6}, {\color{gray} 2}, {\color{gray} 4}, {\color{gray} 5}, {\color{gray} 3}, {\color{gray} 4}, {\color{gray} 2}, {\color{gray} 1}, {\color{gray} 3}, {\color{gray} 4}, {\color{gray} 5}, {\color{gray} 6}, {\color{gray} 7}), \\
	({\bf 7}, {\bf 6}, {\bf 5}, {\bf 4}, {\bf 2}, {\bf 3}, {\bf 4}, {\bf 5}, {\bf 6}, {\color{gray}  7}, {\color{gray}  1}, {\color{gray} 3}, {\color{gray} 4}, {\color{gray} 5}, {\color{gray} 6}, {\color{gray} 2}, {\color{gray} 4}, {\color{gray} 5}, {\color{gray} 3}, {\color{gray} 4}, {\color{gray} 2}, {\color{gray} 1}, {\color{gray} 3}, {\color{gray} 4}, {\color{gray} 5}, {\color{gray} 6}, {\bf 7}), \\
	({\bf 7}, {\bf 6}, {\bf 5}, {\bf 4}, {\bf 2}, {\bf 3}, {\bf 4}, {\bf 5}, {\color{gray}  6}, {\color{gray}  7}, {\color{gray}  1}, {\color{gray} 3}, {\color{gray} 4}, {\color{gray} 5}, {\bf 6}, {\color{gray} 2}, {\color{gray} 4}, {\color{gray} 5}, {\color{gray} 3}, {\color{gray} 4}, {\color{gray} 2}, {\color{gray} 1}, {\color{gray} 3}, {\color{gray} 4}, {\color{gray} 5}, {\color{gray} 6}, {\bf 7}), \\
	({\bf 7}, {\bf 6}, {\bf 5}, {\bf 4}, {\bf 2}, {\bf 3}, {\bf 4}, {\color{gray} 5}, {\color{gray}  6}, {\color{gray}  7}, {\color{gray}  1}, {\color{gray} 3}, {\color{gray} 4}, {\bf 5}, {\bf 6}, {\color{gray} 2}, {\color{gray} 4}, {\color{gray} 5}, {\color{gray} 3}, {\color{gray} 4}, {\color{gray} 2}, {\color{gray} 1}, {\color{gray} 3}, {\color{gray} 4}, {\color{gray} 5}, {\color{gray} 6}, {\bf 7}), \\
	({\bf 7}, {\bf 6}, {\bf 5}, {\bf 4}, {\bf 2}, {\bf 3}, {\color{gray}  4}, {\color{gray}  5}, {\color{gray}  6}, {\color{gray}  7}, {\color{gray}  1}, {\color{gray} 3}, {\bf 4}, {\bf 5}, {\bf 6}, {\color{gray} 2}, {\color{gray} 4}, {\color{gray} 5}, {\color{gray} 3}, {\color{gray} 4}, {\color{gray} 2}, {\color{gray} 1}, {\color{gray} 3}, {\color{gray} 4}, {\color{gray} 5}, {\color{gray} 6}, {\bf 7}), \\
	({\bf 7}, {\bf 6}, {\bf 5}, {\bf 4}, {\bf 2}, {\color{gray}  3}, {\color{gray}  4}, {\color{gray}  5}, {\color{gray}  6}, {\color{gray}  7}, {\color{gray}  1}, {\bf 3}, {\bf 4}, {\bf 5}, {\bf 6}, {\color{gray} 2}, {\color{gray} 4}, {\color{gray} 5}, {\color{gray} 3}, {\color{gray} 4}, {\color{gray} 2}, {\color{gray} 1}, {\color{gray} 3}, {\color{gray} 4}, {\color{gray} 5}, {\color{gray} 6}, {\bf 7}), \\
	({\bf 7}, {\bf 6}, {\bf 5}, {\bf 4}, {\bf 2}, {\bf 3}, {\bf 4}, {\bf 5}, {\color{gray}  6}, {\color{gray}  7}, {\color{gray}  1}, {\color{gray} 3}, {\color{gray} 4}, {\color{gray} 5}, {\color{gray}  6}, {\color{gray} 2}, {\color{gray} 4}, {\color{gray} 5}, {\color{gray} 3}, {\color{gray} 4}, {\color{gray} 2}, {\color{gray} 1}, {\color{gray} 3}, {\color{gray} 4}, {\color{gray} 5}, {\bf 6}, {\bf 7}), \\
	({\bf 7}, {\bf 6}, {\bf 5}, {\bf 4}, {\bf 2}, {\bf 3}, {\bf 4}, {\color{gray}  5}, {\color{gray}  6}, {\color{gray}  7}, {\color{gray}  1}, {\color{gray} 3}, {\color{gray} 4}, {\bf 5}, {\color{gray}  6}, {\color{gray} 2}, {\color{gray} 4}, {\color{gray} 5}, {\color{gray} 3}, {\color{gray} 4}, {\color{gray} 2}, {\color{gray} 1}, {\color{gray} 3}, {\color{gray} 4}, {\color{gray} 5}, {\bf 6}, {\bf 7}), \\
	({\bf 7}, {\bf 6}, {\bf 5}, {\bf 4}, {\bf 2}, {\bf 3}, {\color{gray}  4}, {\color{gray}  5}, {\color{gray}  6}, {\color{gray}  7}, {\color{gray}  1}, {\color{gray} 3}, {\bf 4}, {\bf 5}, {\color{gray}  6}, {\color{gray} 2}, {\color{gray} 4}, {\color{gray} 5}, {\color{gray} 3}, {\color{gray} 4}, {\color{gray} 2}, {\color{gray} 1}, {\color{gray} 3}, {\color{gray} 4}, {\color{gray} 5}, {\bf 6}, {\bf 7}), \\
	({\bf 7}, {\bf 6}, {\bf 5}, {\bf 4}, {\bf 2}, {\color{gray}  3}, {\color{gray}  4}, {\color{gray}  5}, {\color{gray}  6}, {\color{gray}  7}, {\color{gray}  1}, {\bf 3}, {\bf 4}, {\bf 5}, {\color{gray}  6}, {\color{gray} 2}, {\color{gray} 4}, {\color{gray} 5}, {\color{gray} 3}, {\color{gray} 4}, {\color{gray} 2}, {\color{gray} 1}, {\color{gray} 3}, {\color{gray} 4}, {\color{gray} 5}, {\bf 6}, {\bf 7}), \\
	({\bf 7}, {\bf 6}, {\bf 5}, {\bf 4}, {\bf 2}, {\bf 3}, {\bf 4}, {\color{gray}  5}, {\color{gray}  6}, {\color{gray}  7}, {\color{gray}  1}, {\color{gray} 3}, {\color{gray} 4}, {\color{gray}  5}, {\color{gray}  6}, {\color{gray} 2}, {\color{gray} 4}, {\bf 5}, {\color{gray} 3}, {\color{gray} 4}, {\color{gray} 2}, {\color{gray} 1}, {\color{gray} 3}, {\color{gray} 4}, {\color{gray} 5}, {\bf 6}, {\bf 7}), \\
	({\bf 7}, {\bf 6}, {\bf 5}, {\bf 4}, {\bf 2}, {\bf 3}, {\color{gray}  4}, {\color{gray}  5}, {\color{gray}  6}, {\color{gray}  7}, {\color{gray}  1}, {\color{gray} 3}, {\bf 4}, {\color{gray}  5}, {\color{gray}  6}, {\color{gray} 2}, {\color{gray} 4}, {\bf 5}, {\color{gray} 3}, {\color{gray} 4}, {\color{gray} 2}, {\color{gray} 1}, {\color{gray} 3}, {\color{gray} 4}, {\color{gray} 5}, {\bf 6}, {\bf 7}), \\
	({\bf 7}, {\bf 6}, {\bf 5}, {\bf 4}, {\bf 2}, {\color{gray} 3}, {\color{gray}  4}, {\color{gray}  5}, {\color{gray}  6}, {\color{gray}  7}, {\color{gray}  1}, {\bf 3}, {\bf 4}, {\color{gray}  5}, {\color{gray}  6}, {\color{gray} 2}, {\color{gray} 4}, {\bf 5}, {\color{gray} 3}, {\color{gray} 4}, {\color{gray} 2}, {\color{gray} 1}, {\color{gray} 3}, {\color{gray} 4}, {\color{gray} 5}, {\bf 6}, {\bf 7}), \\
	({\bf 7}, {\bf 6}, {\bf 5}, {\bf 4}, {\bf 2}, {\bf 3}, {\color{gray}  4}, {\color{gray}  5}, {\color{gray}  6}, {\color{gray}  7}, {\color{gray}  1}, {\color{gray} 3}, {\color{gray}  4}, {\color{gray}  5}, {\color{gray}  6}, {\color{gray} 2}, {\bf 4}, {\bf 5}, {\color{gray} 3}, {\color{gray} 4}, {\color{gray} 2}, {\color{gray} 1}, {\color{gray} 3}, {\color{gray} 4}, {\color{gray} 5}, {\bf 6}, {\bf 7}), \\
	({\bf 7}, {\bf 6}, {\bf 5}, {\bf 4}, {\bf 2}, {\color{gray}  3}, {\color{gray}  4}, {\color{gray}  5}, {\color{gray}  6}, {\color{gray}  7}, {\color{gray}  1}, {\bf 3}, {\color{gray}  4}, {\color{gray}  5}, {\color{gray}  6}, {\color{gray} 2}, {\bf 4}, {\bf 5}, {\color{gray} 3}, {\color{gray} 4}, {\color{gray} 2}, {\color{gray} 1}, {\color{gray} 3}, {\color{gray} 4}, {\color{gray} 5}, {\bf 6}, {\bf 7}), \\
	({\bf 7}, {\bf 6}, {\bf 5}, {\bf 4}, {\bf 2}, {\bf 3}, {\bf 4}, {\color{gray}  5}, {\color{gray}  6}, {\color{gray}  7}, {\color{gray}  1}, {\color{gray} 3}, {\color{gray} 4}, {\color{gray}  5}, {\color{gray}  6}, {\color{gray} 2}, {\color{gray} 4}, {\color{gray}  5}, {\color{gray} 3}, {\color{gray} 4}, {\color{gray} 2}, {\color{gray} 1}, {\color{gray} 3}, {\color{gray} 4}, {\bf 5}, {\bf 6}, {\bf 7}), \\
	({\bf 7}, {\bf 6}, {\bf 5}, {\bf 4}, {\bf 2}, {\bf 3}, {\color{gray}  4}, {\color{gray}  5}, {\color{gray}  6}, {\color{gray}  7}, {\color{gray}  1}, {\color{gray} 3}, {\bf 4}, {\color{gray}  5}, {\color{gray}  6}, {\color{gray} 2}, {\color{gray} 4}, {\color{gray}  5}, {\color{gray} 3}, {\color{gray} 4}, {\color{gray} 2}, {\color{gray} 1}, {\color{gray} 3}, {\color{gray} 4}, {\bf 5}, {\bf 6}, {\bf 7}), \\
	({\bf 7}, {\bf 6}, {\bf 5}, {\bf 4}, {\bf 2}, {\color{gray}  3}, {\color{gray}  4}, {\color{gray}  5}, {\color{gray}  6}, {\color{gray}  7}, {\color{gray}  1}, {\bf 3}, {\bf 4}, {\color{gray}  5}, {\color{gray}  6}, {\color{gray} 2}, {\color{gray} 4}, {\color{gray}  5}, {\color{gray} 3}, {\color{gray} 4}, {\color{gray} 2}, {\color{gray} 1}, {\color{gray} 3}, {\color{gray} 4}, {\bf 5}, {\bf 6}, {\bf 7}), \\
	({\bf 7}, {\bf 6}, {\bf 5}, {\bf 4}, {\bf 2}, {\bf 3}, {\color{gray}  4}, {\color{gray}  5}, {\color{gray}  6}, {\color{gray}  7}, {\color{gray}  1}, {\color{gray} 3}, {\color{gray}  4}, {\color{gray}  5}, {\color{gray}  6}, {\color{gray} 2}, {\bf 4}, {\color{gray}  5}, {\color{gray} 3}, {\color{gray} 4}, {\color{gray} 2}, {\color{gray} 1}, {\color{gray} 3}, {\color{gray} 4}, {\bf 5}, {\bf 6}, {\bf 7}), \\
	({\bf 7}, {\bf 6}, {\bf 5}, {\bf 4}, {\bf 2}, {\bf 3}, {\color{gray}  4}, {\color{gray}  5}, {\color{gray}  6}, {\color{gray}  7}, {\color{gray}  1}, {\color{gray} 3}, {\color{gray}  4}, {\color{gray}  5}, {\color{gray}  6}, {\color{gray} 2}, {\color{gray} 4}, {\color{gray}  5}, {\color{gray} 3}, {\bf 4}, {\color{gray} 2}, {\color{gray} 1}, {\color{gray} 3}, {\color{gray} 4}, {\bf 5}, {\bf 6}, {\bf 7}) \\
	({\bf 7}, {\bf 6}, {\bf 5}, {\bf 4}, {\bf 2}, {\color{gray} 3}, {\color{gray}  4}, {\color{gray}  5}, {\color{gray}  6}, {\color{gray}  7}, {\color{gray}  1}, {\bf 3}, {\color{gray}  4}, {\color{gray}  5}, {\color{gray}  6}, {\color{gray} 2}, {\color{gray} 4}, {\color{gray}  5}, {\color{gray} 3}, {\bf 4}, {\color{gray} 2}, {\color{gray} 1}, {\color{gray} 3}, {\color{gray} 4}, {\bf 5}, {\bf 6}, {\bf 7}) \\
	({\bf 7}, {\bf 6}, {\bf 5}, {\bf 4}, {\bf 2}, {\color{gray}  3}, {\color{gray}  4}, {\color{gray}  5}, {\color{gray}  6}, {\color{gray}  7}, {\color{gray}  1}, {\bf 3}, {\color{gray}  4}, {\color{gray}  5}, {\color{gray}  6}, {\color{gray} 2}, {\bf 4}, {\color{gray}  5}, {\color{gray} 3}, {\color{gray} 4}, {\color{gray} 2}, {\color{gray} 1}, {\color{gray} 3}, {\color{gray} 4}, {\bf 5}, {\bf 6}, {\bf 7}), \\
	({\bf 7}, {\bf 6}, {\bf 5}, {\bf 4}, {\bf 2}, {\bf 3}, {\color{gray}  4}, {\color{gray}  5}, {\color{gray}  6}, {\color{gray}  7}, {\color{gray}  1}, {\color{gray} 3}, {\color{gray}  4}, {\color{gray}  5}, {\color{gray}  6}, {\color{gray} 2}, {\color{gray}  4}, {\color{gray}  5}, {\color{gray} 3}, {\color{gray} 4}, {\color{gray} 2}, {\color{gray} 1}, {\color{gray} 3}, {\bf 4}, {\bf 5}, {\bf 6}, {\bf 7}), \\
	({\bf 7}, {\bf 6}, {\bf 5}, {\bf 4}, {\bf 2}, {\color{gray}  3}, {\color{gray}  4}, {\color{gray}  5}, {\color{gray}  6}, {\color{gray}  7}, {\color{gray}  1}, {\bf 3}, {\color{gray}  4}, {\color{gray}  5}, {\color{gray}  6}, {\color{gray} 2}, {\color{gray}  4}, {\color{gray}  5}, {\color{gray} 3}, {\color{gray} 4}, {\color{gray} 2}, {\color{gray} 1}, {\color{gray} 3}, {\bf 4}, {\bf 5}, {\bf 6}, {\bf 7}), \\
	({\bf 7}, {\bf 6}, {\bf 5}, {\bf 4}, {\bf 2}, {\color{gray}  3}, {\color{gray}  4}, {\color{gray}  5}, {\color{gray}  6}, {\color{gray}  7}, {\color{gray}  1}, {\color{gray}  3}, {\color{gray}  4}, {\color{gray}  5}, {\color{gray}  6}, {\color{gray} 2}, {\color{gray}  4}, {\color{gray}  5}, {\bf 3}, {\color{gray} 4}, {\color{gray} 2}, {\color{gray} 1}, {\color{gray} 3}, {\bf 4}, {\bf 5}, {\bf 6}, {\bf 7}), \\
	({\bf 7}, {\bf 6}, {\bf 5}, {\bf 4}, {\color{gray}  2}, {\color{gray}  3}, {\color{gray}  4}, {\color{gray}  5}, {\color{gray}  6}, {\color{gray}  7}, {\color{gray}  1}, {\color{gray}  3}, {\color{gray}  4}, {\color{gray}  5}, {\color{gray}  6}, {\bf 2}, {\color{gray}  4}, {\color{gray}  5}, {\bf 3}, {\color{gray} 4}, {\color{gray} 2}, {\color{gray} 1}, {\color{gray} 3}, {\bf 4}, {\bf 5}, {\bf 6}, {\bf 7}), \\
	({\bf 7}, {\bf 6}, {\bf 5}, {\color{gray} 4}, {\color{gray}  2}, {\color{gray}  3}, {\bf  4}, {\color{gray}  5}, {\color{gray}  6}, {\color{gray}  7}, {\color{gray}  1}, {\color{gray}  3}, {\color{gray}  4}, {\color{gray}  5}, {\color{gray}  6}, {\bf 2}, {\color{gray}  4}, {\color{gray}  5}, {\bf 3}, {\color{gray} 4}, {\color{gray} 2}, {\color{gray} 1}, {\color{gray} 3}, {\bf 4}, {\bf 5}, {\bf 6}, {\bf 7}), \\
	({\bf 7}, {\bf 6}, {\bf 5}, {\color{gray} 4}, {\color{gray}  2}, {\color{gray}  3}, {\color{gray} 4}, {\color{gray}  5}, {\color{gray}  6}, {\color{gray}  7}, {\color{gray}  1}, {\color{gray}  3}, {\bf  4}, {\color{gray}  5}, {\color{gray}  6}, {\bf 2}, {\color{gray}  4}, {\color{gray}  5}, {\bf 3}, {\color{gray} 4}, {\color{gray} 2}, {\color{gray} 1}, {\color{gray} 3}, {\bf 4}, {\bf 5}, {\bf 6}, {\bf 7}), \\
	({\bf 7}, {\bf 6}, {\color{gray}  5}, {\color{gray} 4}, {\color{gray}  2}, {\color{gray}  3}, {\color{gray} 4}, {\bf  5}, {\color{gray}  6}, {\color{gray}  7}, {\color{gray}  1}, {\color{gray}  3}, {\bf  4}, {\color{gray}  5}, {\color{gray}  6}, {\bf 2}, {\color{gray}  4}, {\color{gray}  5}, {\bf 3}, {\color{gray} 4}, {\color{gray} 2}, {\color{gray} 1}, {\color{gray} 3}, {\bf 4}, {\bf 5}, {\bf 6}, {\bf 7}), \\
	({\bf 7}, {\bf 6}, {\bf 5}, {\bf 4}, {\color{gray}  2}, {\color{gray}  3}, {\color{gray}  4}, {\color{gray}  5}, {\color{gray}  6}, {\color{gray}  7}, {\color{gray}  1}, {\color{gray}  3}, {\color{gray}  4}, {\color{gray}  5}, {\color{gray}  6}, {\bf 2}, {\color{gray}  4}, {\color{gray}  5}, {\bf 3}, {\bf 4}, {\color{gray} 2}, {\color{gray} 1}, {\color{gray} 3}, {\color{gray} 4}, {\bf 5}, {\bf 6}, {\bf 7}), \\
	({\bf 7}, {\bf 6}, {\bf 5}, {\color{gray} 4}, {\color{gray}  2}, {\color{gray}  3}, {\bf  4}, {\color{gray}  5}, {\color{gray}  6}, {\color{gray}  7}, {\color{gray}  1}, {\color{gray}  3}, {\color{gray}  4}, {\color{gray}  5}, {\color{gray}  6}, {\bf 2}, {\color{gray}  4}, {\color{gray}  5}, {\bf 3}, {\bf 4}, {\color{gray} 2}, {\color{gray} 1}, {\color{gray} 3}, {\color{gray} 4}, {\bf 5}, {\bf 6}, {\bf 7}), \\
	({\bf 7}, {\bf 6}, {\bf 5}, {\color{gray} 4}, {\color{gray}  2}, {\color{gray}  3}, {\color{gray}  4}, {\color{gray}  5}, {\color{gray}  6}, {\color{gray}  7}, {\color{gray}  1}, {\color{gray}  3}, {\bf  4}, {\color{gray}  5}, {\color{gray}  6}, {\bf 2}, {\color{gray}  4}, {\color{gray}  5}, {\bf 3}, {\bf 4}, {\color{gray} 2}, {\color{gray} 1}, {\color{gray} 3}, {\color{gray} 4}, {\bf 5}, {\bf 6}, {\bf 7}), \\
	({\bf 7}, {\bf 6}, {\bf 5}, {\bf 4}, {\bf 2}, {\color{gray}  3}, {\color{gray}  4}, {\color{gray}  5}, {\color{gray}  6}, {\color{gray}  7}, {\color{gray}  1}, {\color{gray}  3}, {\color{gray}  4}, {\color{gray}  5}, {\color{gray}  6}, {\color{gray} 2}, {\color{gray}  4}, {\color{gray}  5}, {\color{gray}  3}, {\color{gray} 4}, {\color{gray} 2}, {\color{gray} 1}, {\bf 3}, {\bf 4}, {\bf 5}, {\bf 6}, {\bf 7}), \\
	({\bf 7}, {\bf 6}, {\bf 5}, {\bf 4}, {\color{gray}  2}, {\color{gray}  3}, {\color{gray}  4}, {\color{gray}  5}, {\color{gray}  6}, {\color{gray}  7}, {\color{gray}  1}, {\color{gray}  3}, {\color{gray}  4}, {\color{gray}  5}, {\color{gray}  6}, {\bf 2}, {\color{gray}  4}, {\color{gray}  5}, {\color{gray}  3}, {\color{gray} 4}, {\color{gray} 2}, {\color{gray} 1}, {\bf 3}, {\bf 4}, {\bf 5}, {\bf 6}, {\bf 7}), \\
	({\bf 7}, {\bf 6}, {\bf 5}, {\color{gray} 4}, {\color{gray}  2}, {\color{gray}  3}, {\bf  4}, {\color{gray}  5}, {\color{gray}  6}, {\color{gray}  7}, {\color{gray}  1}, {\color{gray}  3}, {\color{gray}  4}, {\color{gray}  5}, {\color{gray}  6}, {\bf 2}, {\color{gray}  4}, {\color{gray}  5}, {\color{gray}  3}, {\color{gray} 4}, {\color{gray} 2}, {\color{gray} 1}, {\bf 3}, {\bf 4}, {\bf 5}, {\bf 6}, {\bf 7}), \\
	({\bf 7}, {\bf 6}, {\bf 5}, {\color{gray} 4}, {\color{gray}  2}, {\color{gray}  3}, {\color{gray}   4}, {\color{gray}  5}, {\color{gray}  6}, {\color{gray}  7}, {\color{gray}  1}, {\color{gray}  3}, {\bf  4}, {\color{gray}  5}, {\color{gray}  6}, {\bf 2}, {\color{gray}  4}, {\color{gray}  5}, {\color{gray}  3}, {\color{gray} 4}, {\color{gray} 2}, {\color{gray} 1}, {\bf 3}, {\bf 4}, {\bf 5}, {\bf 6}, {\bf 7}), \\
	({\bf 7}, {\bf 6}, {\color{gray}  5}, {\color{gray} 4}, {\color{gray}  2}, {\color{gray}  3}, {\color{gray}   4}, {\bf  5}, {\color{gray}  6}, {\color{gray}  7}, {\color{gray}  1}, {\color{gray}  3}, {\bf  4}, {\color{gray}  5}, {\color{gray}  6}, {\bf 2}, {\color{gray}  4}, {\color{gray}  5}, {\color{gray}  3}, {\color{gray} 4}, {\color{gray} 2}, {\color{gray} 1}, {\bf 3}, {\bf 4}, {\bf 5}, {\bf 6}, {\bf 7}), \\
	({\bf 7}, {\bf 6}, {\bf 5}, {\bf 4}, {\color{gray}  2}, {\color{gray}  3}, {\color{gray}  4}, {\color{gray}  5}, {\color{gray}  6}, {\color{gray}  7}, {\color{gray}  1}, {\color{gray}  3}, {\color{gray}  4}, {\color{gray}  5}, {\color{gray}  6}, {\color{gray}  2}, {\color{gray}  4}, {\color{gray}  5}, {\color{gray}  3}, {\color{gray} 4}, {\bf 2}, {\color{gray} 1}, {\bf 3}, {\bf 4}, {\bf 5}, {\bf 6}, {\bf 7}), \\
	({\bf 7}, {\bf 6}, {\bf 5}, {\color{gray}  4}, {\color{gray}  2}, {\color{gray}  3}, {\bf 4}, {\color{gray}  5}, {\color{gray}  6}, {\color{gray}  7}, {\color{gray}  1}, {\color{gray}  3}, {\color{gray}  4}, {\color{gray}  5}, {\color{gray}  6}, {\color{gray}  2}, {\color{gray}  4}, {\color{gray}  5}, {\color{gray}  3}, {\color{gray} 4}, {\bf 2}, {\color{gray} 1}, {\bf 3}, {\bf 4}, {\bf 5}, {\bf 6}, {\bf 7}), \\
	({\bf 7}, {\bf 6}, {\bf 5}, {\color{gray}  4}, {\color{gray}  2}, {\color{gray}  3}, {\color{gray}  4}, {\color{gray}  5}, {\color{gray}  6}, {\color{gray}  7}, {\color{gray}  1}, {\color{gray}  3}, {\bf  4}, {\color{gray}  5}, {\color{gray}  6}, {\color{gray}  2}, {\color{gray}  4}, {\color{gray}  5}, {\color{gray}  3}, {\color{gray} 4}, {\bf 2}, {\color{gray} 1}, {\bf 3}, {\bf 4}, {\bf 5}, {\bf 6}, {\bf 7}), \\
	({\bf 7}, {\bf 6}, {\color{gray} 5}, {\color{gray}  4}, {\color{gray}  2}, {\color{gray}  3}, {\color{gray}  4}, {\bf  5}, {\color{gray}  6}, {\color{gray}  7}, {\color{gray}  1}, {\color{gray}  3}, {\bf  4}, {\color{gray}  5}, {\color{gray}  6}, {\color{gray}  2}, {\color{gray}  4}, {\color{gray}  5}, {\color{gray}  3}, {\color{gray} 4}, {\bf 2}, {\color{gray} 1}, {\bf 3}, {\bf 4}, {\bf 5}, {\bf 6}, {\bf 7}), \\
	({\bf 7}, {\bf 6}, {\bf 5}, {\color{gray}  4}, {\color{gray}  2}, {\color{gray}  3}, {\color{gray}  4}, {\color{gray}  5}, {\color{gray}  6}, {\color{gray}  7}, {\color{gray}  1}, {\color{gray}  3}, {\color{gray}   4}, {\color{gray}  5}, {\color{gray}  6}, {\color{gray}  2}, {\bf  4}, {\color{gray}  5}, {\color{gray}  3}, {\color{gray} 4}, {\bf 2}, {\color{gray} 1}, {\bf 3}, {\bf 4}, {\bf 5}, {\bf 6}, {\bf 7}), \\
	({\bf 7}, {\bf 6}, {\color{gray}  5}, {\color{gray}  4}, {\color{gray}  2}, {\color{gray}  3}, {\color{gray}  4}, {\bf  5}, {\color{gray}  6}, {\color{gray}  7}, {\color{gray}  1}, {\color{gray}  3}, {\color{gray}   4}, {\color{gray}  5}, {\color{gray}  6}, {\color{gray}  2}, {\bf  4}, {\color{gray}  5}, {\color{gray}  3}, {\color{gray} 4}, {\bf 2}, {\color{gray} 1}, {\bf 3}, {\bf 4}, {\bf 5}, {\bf 6}, {\bf 7}), \\
	({\bf 7}, {\bf 6}, {\color{gray}  5}, {\color{gray}  4}, {\color{gray}  2}, {\color{gray}  3}, {\color{gray}  4}, {\color{gray}  5}, {\color{gray}  6}, {\color{gray}  7}, {\color{gray}  1}, {\color{gray}  3}, {\color{gray}   4}, {\bf  5}, {\color{gray}  6}, {\color{gray}  2}, {\bf  4}, {\color{gray}  5}, {\color{gray}  3}, {\color{gray} 4}, {\bf 2}, {\color{gray} 1}, {\bf 3}, {\bf 4}, {\bf 5}, {\bf 6}, {\bf 7}), \\
	({\bf 7}, {\color{gray}  6}, {\color{gray}  5}, {\color{gray}  4}, {\color{gray}  2}, {\color{gray}  3}, {\color{gray}  4}, {\color{gray}  5}, {\bf  6}, {\color{gray}  7}, {\color{gray}  1}, {\color{gray}  3}, {\color{gray}   4}, {\bf  5}, {\color{gray}  6}, {\color{gray}  2}, {\bf  4}, {\color{gray}  5}, {\color{gray}  3}, {\color{gray} 4}, {\bf 2}, {\color{gray} 1}, {\bf 3}, {\bf 4}, {\bf 5}, {\bf 6}, {\bf 7}), \\
	({\bf 7}, {\bf 6}, {\bf 5}, {\color{gray}  4}, {\color{gray}  2}, {\color{gray}  3}, {\color{gray}  4}, {\color{gray}  5}, {\color{gray}  6}, {\color{gray}  7}, {\color{gray}  1}, {\color{gray}  3}, {\color{gray}   4}, {\color{gray}  5}, {\color{gray}  6}, {\color{gray}  2}, {\color{gray} 4}, {\color{gray}  5}, {\color{gray}  3}, {\bf 4}, {\bf 2}, {\color{gray} 1}, {\bf 3}, {\bf 4}, {\bf 5}, {\bf 6}, {\bf 7}), \\
	({\bf 7}, {\bf 6}, {\color{gray}  5}, {\color{gray}  4}, {\color{gray}  2}, {\color{gray}  3}, {\color{gray}  4}, {\bf  5}, {\color{gray}  6}, {\color{gray}  7}, {\color{gray}  1}, {\color{gray}  3}, {\color{gray}   4}, {\color{gray}  5}, {\color{gray}  6}, {\color{gray}  2}, {\color{gray} 4}, {\color{gray}  5}, {\color{gray}  3}, {\bf 4}, {\bf 2}, {\color{gray} 1}, {\bf 3}, {\bf 4}, {\bf 5}, {\bf 6}, {\bf 7}), \\
	({\bf 7}, {\bf 6}, {\color{gray}  5}, {\color{gray}  4}, {\color{gray}  2}, {\color{gray}  3}, {\color{gray}  4}, {\color{gray}   5}, {\color{gray}  6}, {\color{gray}  7}, {\color{gray}  1}, {\color{gray}  3}, {\color{gray}   4}, {\bf  5}, {\color{gray}  6}, {\color{gray}  2}, {\color{gray} 4}, {\color{gray}  5}, {\color{gray}  3}, {\bf 4}, {\bf 2}, {\color{gray} 1}, {\bf 3}, {\bf 4}, {\bf 5}, {\bf 6}, {\bf 7}), \\
	({\bf 7}, {\color{gray}  6}, {\color{gray}  5}, {\color{gray}  4}, {\color{gray}  2}, {\color{gray}  3}, {\color{gray}  4}, {\color{gray}   5}, {\bf  6}, {\color{gray}  7}, {\color{gray}  1}, {\color{gray}  3}, {\color{gray}   4}, {\bf  5}, {\color{gray}  6}, {\color{gray}  2}, {\color{gray} 4}, {\color{gray}  5}, {\color{gray}  3}, {\bf 4}, {\bf 2}, {\color{gray} 1}, {\bf 3}, {\bf 4}, {\bf 5}, {\bf 6}, {\bf 7}), \\
	({\bf 7}, {\bf 6}, {\color{gray}  5}, {\color{gray}  4}, {\color{gray}  2}, {\color{gray}  3}, {\color{gray}  4}, {\color{gray}   5}, {\color{gray}  6}, {\color{gray}  7}, {\color{gray}  1}, {\color{gray}  3}, {\color{gray}   4}, {\color{gray}   5}, {\color{gray}  6}, {\color{gray}  2}, {\color{gray} 4}, {\bf 5}, {\color{gray}  3}, {\bf 4}, {\bf 2}, {\color{gray} 1}, {\bf 3}, {\bf 4}, {\bf 5}, {\bf 6}, {\bf 7}), \\
	({\bf 7}, {\color{gray}  6}, {\color{gray}  5}, {\color{gray}  4}, {\color{gray}  2}, {\color{gray}  3}, {\color{gray}  4}, {\color{gray}   5}, {\bf  6}, {\color{gray}  7}, {\color{gray}  1}, {\color{gray}  3}, {\color{gray}   4}, {\color{gray}   5}, {\color{gray}  6}, {\color{gray}  2}, {\color{gray} 4}, {\bf 5}, {\color{gray}  3}, {\bf 4}, {\bf 2}, {\color{gray} 1}, {\bf 3}, {\bf 4}, {\bf 5}, {\bf 6}, {\bf 7}), \\
	({\bf 7}, {\color{gray}  6}, {\color{gray}  5}, {\color{gray}  4}, {\color{gray}  2}, {\color{gray}  3}, {\color{gray}  4}, {\color{gray}   5}, {\color{gray}   6}, {\color{gray}  7}, {\color{gray}  1}, {\color{gray}  3}, {\color{gray}   4}, {\color{gray}   5}, {\bf 6}, {\color{gray}  2}, {\color{gray} 4}, {\bf 5}, {\color{gray}  3}, {\bf 4}, {\bf 2}, {\color{gray} 1}, {\bf 3}, {\bf 4}, {\bf 5}, {\bf 6}, {\bf 7}), \\
	({\color{gray}  7}, {\color{gray}  6}, {\color{gray}  5}, {\color{gray}  4}, {\color{gray}  2}, {\color{gray}  3}, {\color{gray}  4}, {\color{gray}   5}, {\color{gray}   6}, {\bf  7}, {\color{gray}  1}, {\color{gray}  3}, {\color{gray}   4}, {\color{gray}   5}, {\bf 6}, {\color{gray}  2}, {\color{gray} 4}, {\bf 5}, {\color{gray}  3}, {\bf 4}, {\bf 2}, {\color{gray} 1}, {\bf 3}, {\bf 4}, {\bf 5}, {\bf 6}, {\bf 7}).
\end{gather*}}

{\it Sub-Case 2}. (7, 6, 5, 4, {\bf 3}, {\bf 2}, 4, 5, 6, 7).
{\allowdisplaybreaks \footnotesize
\begin{gather*}
	({\bf 7}, {\bf 6}, {\bf 5}, {\bf 4}, {\color{gray}  2}, {\bf  3}, {\color{gray}  4}, {\color{gray}  5}, {\color{gray}  6}, {\color{gray}  7}, {\color{gray}  1}, {\color{gray}  3}, {\color{gray}  4}, {\color{gray}  5}, {\color{gray}  6}, {\bf 2}, {\bf 4}, {\bf 5}, {\color{gray} 3}, {\color{gray} 4}, {\color{gray} 2}, {\color{gray} 1}, {\color{gray} 3}, {\color{gray} 4}, {\color{gray} 5}, {\bf 6}, {\bf 7}), \\
	({\bf 7}, {\bf 6}, {\bf 5}, {\bf 4}, {\color{gray}  2}, {\color{gray}   3}, {\color{gray}  4}, {\color{gray}  5}, {\color{gray}  6}, {\color{gray}  7}, {\color{gray}  1}, {\bf  3}, {\color{gray}  4}, {\color{gray}  5}, {\color{gray}  6}, {\bf 2}, {\bf 4}, {\bf 5}, {\color{gray} 3}, {\color{gray} 4}, {\color{gray} 2}, {\color{gray} 1}, {\color{gray} 3}, {\color{gray} 4}, {\color{gray} 5}, {\bf 6}, {\bf 7}), \\
	({\bf 7}, {\bf 6}, {\bf 5}, {\color{gray}  4}, {\color{gray}  2}, {\color{gray}   3}, {\bf  4}, {\color{gray}  5}, {\color{gray}  6}, {\color{gray}  7}, {\color{gray}  1}, {\bf  3}, {\color{gray}  4}, {\color{gray}  5}, {\color{gray}  6}, {\bf 2}, {\bf 4}, {\bf 5}, {\color{gray} 3}, {\color{gray} 4}, {\color{gray} 2}, {\color{gray} 1}, {\color{gray} 3}, {\color{gray} 4}, {\color{gray} 5}, {\bf 6}, {\bf 7}), \\
	({\bf 7}, {\bf 6}, {\bf 5}, {\bf 4}, {\color{gray}  2}, {\bf  3}, {\color{gray}  4}, {\color{gray}  5}, {\color{gray}  6}, {\color{gray}  7}, {\color{gray}  1}, {\color{gray}  3}, {\color{gray}  4}, {\color{gray}  5}, {\color{gray}  6}, {\bf 2}, {\bf 4}, {\color{gray}  5}, {\color{gray} 3}, {\color{gray} 4}, {\color{gray} 2}, {\color{gray} 1}, {\color{gray} 3}, {\color{gray} 4}, {\bf 5}, {\bf 6}, {\bf 7}), \\
	({\bf 7}, {\bf 6}, {\bf 5}, {\bf 4}, {\color{gray}  2}, {\color{gray}  3}, {\color{gray}  4}, {\color{gray}  5}, {\color{gray}  6}, {\color{gray}  7}, {\color{gray}  1}, {\bf  3}, {\color{gray}  4}, {\color{gray}  5}, {\color{gray}  6}, {\bf 2}, {\bf 4}, {\color{gray}  5}, {\color{gray} 3}, {\color{gray} 4}, {\color{gray} 2}, {\color{gray} 1}, {\color{gray} 3}, {\color{gray} 4}, {\bf 5}, {\bf 6}, {\bf 7}), \\
	({\bf 7}, {\bf 6}, {\bf 5}, {\color{gray} 4}, {\color{gray}  2}, {\color{gray}  3}, {\bf  4}, {\color{gray}  5}, {\color{gray}  6}, {\color{gray}  7}, {\color{gray}  1}, {\bf  3}, {\color{gray}  4}, {\color{gray}  5}, {\color{gray}  6}, {\bf 2}, {\bf 4}, {\color{gray}  5}, {\color{gray} 3}, {\color{gray} 4}, {\color{gray} 2}, {\color{gray} 1}, {\color{gray} 3}, {\color{gray} 4}, {\bf 5}, {\bf 6}, {\bf 7}), \\
	({\bf 7}, {\bf 6}, {\bf 5}, {\bf 4}, {\color{gray}  2}, {\bf  3}, {\color{gray}  4}, {\color{gray}  5}, {\color{gray}  6}, {\color{gray}  7}, {\color{gray}  1}, {\color{gray}  3}, {\color{gray}  4}, {\color{gray}  5}, {\color{gray}  6}, {\bf 2}, {\color{gray}  4}, {\color{gray}  5}, {\color{gray} 3}, {\bf 4}, {\color{gray} 2}, {\color{gray} 1}, {\color{gray} 3}, {\color{gray} 4}, {\bf 5}, {\bf 6}, {\bf 7}), \\
	({\bf 7}, {\bf 6}, {\bf 5}, {\bf 4}, {\color{gray}  2},{\color{gray}   3}, {\color{gray}  4}, {\color{gray}  5}, {\color{gray}  6}, {\color{gray}  7}, {\color{gray}  1}, {\bf  3}, {\color{gray}  4}, {\color{gray}  5}, {\color{gray}  6}, {\bf 2}, {\color{gray}  4}, {\color{gray}  5}, {\color{gray} 3}, {\bf 4}, {\color{gray} 2}, {\color{gray} 1}, {\color{gray} 3}, {\color{gray} 4}, {\bf 5}, {\bf 6}, {\bf 7}), \\
	({\bf 7}, {\bf 6}, {\bf 5}, {\color{gray}  4}, {\color{gray}  2},{\color{gray}   3}, {\bf  4}, {\color{gray}  5}, {\color{gray}  6}, {\color{gray}  7}, {\color{gray}  1}, {\bf  3}, {\color{gray}  4}, {\color{gray}  5}, {\color{gray}  6}, {\bf 2}, {\color{gray}  4}, {\color{gray}  5}, {\color{gray} 3}, {\bf 4}, {\color{gray} 2}, {\color{gray} 1}, {\color{gray} 3}, {\color{gray} 4}, {\bf 5}, {\bf 6}, {\bf 7}), \\
	({\bf 7}, {\bf 6}, {\bf 5}, {\bf 4}, {\color{gray}  2}, {\bf  3}, {\color{gray}  4}, {\color{gray}  5}, {\color{gray}  6}, {\color{gray}  7}, {\color{gray}  1}, {\color{gray}  3}, {\color{gray}  4}, {\color{gray}  5}, {\color{gray}  6}, {\bf 2}, {\color{gray}  4}, {\color{gray}  5}, {\color{gray} 3}, {\color{gray}  4}, {\color{gray} 2}, {\color{gray} 1}, {\color{gray} 3}, {\bf 4}, {\bf 5}, {\bf 6}, {\bf 7}), \\
	({\bf 7}, {\bf 6}, {\bf 5}, {\bf 4}, {\color{gray}  2}, {\bf  3}, {\color{gray}  4}, {\color{gray}  5}, {\color{gray}  6}, {\color{gray}  7}, {\color{gray}  1}, {\color{gray}  3}, {\color{gray}  4}, {\color{gray}  5}, {\color{gray}  6}, {\color{gray}  2}, {\color{gray}  4}, {\color{gray}  5}, {\color{gray} 3}, {\color{gray}  4}, {\bf 2}, {\color{gray} 1}, {\color{gray} 3}, {\bf 4}, {\bf 5}, {\bf 6}, {\bf 7}), \\
	({\bf 7}, {\bf 6}, {\bf 5}, {\bf 4}, {\color{gray}  2}, {\color{gray}  3}, {\color{gray}  4}, {\color{gray}  5}, {\color{gray}  6}, {\color{gray}  7}, {\color{gray}  1}, {\bf  3}, {\color{gray}  4}, {\color{gray}  5}, {\color{gray}  6}, {\color{gray}  2}, {\color{gray}  4}, {\color{gray}  5}, {\color{gray} 3}, {\color{gray}  4}, {\bf 2}, {\color{gray} 1}, {\color{gray} 3}, {\bf 4}, {\bf 5}, {\bf 6}, {\bf 7}), \\
	({\bf 7}, {\bf 6}, {\bf 5}, {\color{gray}  4}, {\color{gray}  2}, {\color{gray}  3}, {\bf  4}, {\color{gray}  5}, {\color{gray}  6}, {\color{gray}  7}, {\color{gray}  1}, {\bf  3}, {\color{gray}  4}, {\color{gray}  5}, {\color{gray}  6}, {\color{gray}  2}, {\color{gray}  4}, {\color{gray}  5}, {\color{gray} 3}, {\color{gray}  4}, {\bf 2}, {\color{gray} 1}, {\color{gray} 3}, {\bf 4}, {\bf 5}, {\bf 6}, {\bf 7}), \\
	({\bf 7}, {\bf 6}, {\bf 5}, {\bf 4}, {\color{gray}  2}, {\color{gray}  3}, {\color{gray}  4}, {\color{gray}  5}, {\color{gray}  6}, {\color{gray}  7}, {\color{gray}  1}, {\color{gray} 3}, {\color{gray}  4}, {\color{gray}  5}, {\color{gray}  6}, {\color{gray}  2}, {\color{gray}  4}, {\color{gray}  5}, {\bf 3}, {\color{gray}  4}, {\bf 2}, {\color{gray} 1}, {\color{gray} 3}, {\bf 4}, {\bf 5}, {\bf 6}, {\bf 7}), \\
	({\bf 7}, {\bf 6}, {\bf 5}, {\color{gray}  4}, {\color{gray}  2}, {\color{gray}  3}, {\bf  4}, {\color{gray}  5}, {\color{gray}  6}, {\color{gray}  7}, {\color{gray}  1}, {\color{gray} 3}, {\color{gray}  4}, {\color{gray}  5}, {\color{gray}  6}, {\color{gray}  2}, {\color{gray}  4}, {\color{gray}  5}, {\bf 3}, {\color{gray}  4}, {\bf 2}, {\color{gray} 1}, {\color{gray} 3}, {\bf 4}, {\bf 5}, {\bf 6}, {\bf 7}), \\
	({\bf 7}, {\bf 6}, {\bf 5}, {\color{gray}  4}, {\color{gray}  2}, {\color{gray}  3}, {\color{gray}   4}, {\color{gray}  5}, {\color{gray}  6}, {\color{gray}  7}, {\color{gray}  1}, {\color{gray} 3}, {\bf  4}, {\color{gray}  5}, {\color{gray}  6}, {\color{gray}  2}, {\color{gray}  4}, {\color{gray}  5}, {\bf 3}, {\color{gray}  4}, {\bf 2}, {\color{gray} 1}, {\color{gray} 3}, {\bf 4}, {\bf 5}, {\bf 6}, {\bf 7}), \\
	({\bf 7}, {\bf 6}, {\color{gray}  5}, {\color{gray}  4}, {\color{gray}  2}, {\color{gray}  3}, {\color{gray}   4}, {\bf 5}, {\color{gray}  6}, {\color{gray}  7}, {\color{gray}  1}, {\color{gray} 3}, {\bf  4}, {\color{gray}  5}, {\color{gray}  6}, {\color{gray}  2}, {\color{gray}  4}, {\color{gray}  5}, {\bf 3}, {\color{gray}  4}, {\bf 2}, {\color{gray} 1}, {\color{gray} 3}, {\bf 4}, {\bf 5}, {\bf 6}, {\bf 7}), \\
	({\bf 7}, {\bf 6}, {\bf 5}, {\color{gray}  4}, {\color{gray}  2}, {\color{gray}  3}, {\color{gray}   4}, {\color{gray}  5}, {\color{gray}  6}, {\color{gray}  7}, {\color{gray}  1}, {\color{gray} 3}, {\color{gray}   4}, {\color{gray}  5}, {\color{gray}  6}, {\color{gray}  2}, {\bf  4}, {\color{gray}  5}, {\bf 3}, {\color{gray}  4}, {\bf 2}, {\color{gray} 1}, {\color{gray} 3}, {\bf 4}, {\bf 5}, {\bf 6}, {\bf 7}), \\
	({\bf 7}, {\bf 6}, {\color{gray}  5}, {\color{gray}  4}, {\color{gray}  2}, {\color{gray}  3}, {\color{gray}   4}, {\bf  5}, {\color{gray}  6}, {\color{gray}  7}, {\color{gray}  1}, {\color{gray} 3}, {\color{gray}   4}, {\color{gray}  5}, {\color{gray}  6}, {\color{gray}  2}, {\bf  4}, {\color{gray}  5}, {\bf 3}, {\color{gray}  4}, {\bf 2}, {\color{gray} 1}, {\color{gray} 3}, {\bf 4}, {\bf 5}, {\bf 6}, {\bf 7}), \\
	({\bf 7}, {\bf 6}, {\color{gray}  5}, {\color{gray}  4}, {\color{gray}  2}, {\color{gray}  3}, {\color{gray}   4}, {\color{gray}   5}, {\color{gray}  6}, {\color{gray}  7}, {\color{gray}  1}, {\color{gray} 3}, {\color{gray}   4}, {\bf  5}, {\color{gray}  6}, {\color{gray}  2}, {\bf  4}, {\color{gray}  5}, {\bf 3}, {\color{gray}  4}, {\bf 2}, {\color{gray} 1}, {\color{gray} 3}, {\bf 4}, {\bf 5}, {\bf 6}, {\bf 7}), \\
	({\bf 7}, {\color{gray}  6}, {\color{gray}  5}, {\color{gray}  4}, {\color{gray}  2}, {\color{gray}  3}, {\color{gray}   4}, {\color{gray}   5}, {\bf  6}, {\color{gray}  7}, {\color{gray}  1}, {\color{gray} 3}, {\color{gray}   4}, {\bf  5}, {\color{gray}  6}, {\color{gray}  2}, {\bf  4}, {\color{gray}  5}, {\bf 3}, {\color{gray}  4}, {\bf 2}, {\color{gray} 1}, {\color{gray} 3}, {\bf 4}, {\bf 5}, {\bf 6}, {\bf 7}), \\
	({\bf 7}, {\bf 6}, {\bf 5}, {\bf 4}, {\color{gray}  2}, {\color{gray}   3}, {\color{gray}  4}, {\color{gray}  5}, {\color{gray}  6}, {\color{gray}  7}, {\color{gray}  1}, {\bf  3}, {\color{gray}  4}, {\color{gray}  5}, {\color{gray}  6}, {\bf 2}, {\color{gray}  4}, {\color{gray}  5}, {\color{gray} 3}, {\color{gray}  4}, {\color{gray} 2}, {\color{gray} 1}, {\color{gray} 3}, {\bf 4}, {\bf 5}, {\bf 6}, {\bf 7}), \\
	({\bf 7}, {\bf 6}, {\bf 5}, {\color{gray}  4}, {\color{gray}  2}, {\color{gray}   3}, {\bf  4}, {\color{gray}  5}, {\color{gray}  6}, {\color{gray}  7}, {\color{gray}  1}, {\bf  3}, {\color{gray}  4}, {\color{gray}  5}, {\color{gray}  6}, {\bf 2}, {\color{gray}  4}, {\color{gray}  5}, {\color{gray} 3}, {\color{gray}  4}, {\color{gray} 2}, {\color{gray} 1}, {\color{gray} 3}, {\bf 4}, {\bf 5}, {\bf 6}, {\bf 7}).
\end{gather*}
}

\begin{rem}
{\em
The configurations of the paths in Definition \ref{df:paths in Pn} are devised following the above description of the ${\bf j}_0$-trails in $\mathcal{T}^{\,'}$ (cf. Examples \ref{ex:array in E7} and \ref{ex:P to D}).
}
\end{rem}

\subsubsection{Description of ${\bf j}_0$-trails in $\mathcal{T}^{\,'}$}
In this subsection, we describe the ${\bf j}_0$-trails in $\mathcal{T}^{\,'}$ in terms of the path on $\Delta$. 
From now on, we assume $r = n$ for simplicity.

Let $\mathcal{D}$ be the set of arrays, where either $0$ or $1$ is placed in each row of $\Delta^{\J}$.
For ${\bf \pi} \in \mathcal{T}^{\,'}$, we can identify a ${\bf j}_0$-trail $\pi$ in  $\mathcal{T}^{\,'}$ with a unique array ${\bf d}(\pi) = (d_k(\pi))_{N-M+1 \le k \le N}$ of $\mathcal{D}$ determined as follows.
\vskip 3mm

{\it Case 1.} Type $\text{E}_6$. In this case, the array ${\bf d}(\pi)$ is given by
\begin{equation*}
\begin{split}
& \scalebox{1.3}{
\begin{tikzpicture}[->,>=stealth',auto,node distance=3cm,main node/.style={circle,draw,font=\sffamily\Large\bfseries}]
  %% arrows
  %% nilradical part
 %% nilradical part
  \node (1-1) at (-0.1,0) {${}_{\scalebox{0.6}{1}}$}; % 1st floor & 1
  \node (3-1) at (0.6,0) {${}_{\scalebox{0.6}{3}}$}; % 1st floor & 2
  \node (4-1) at (1.3,0) {${}_{\scalebox{0.6}{4}}$}; % 1st floor & 3
  \node (5-1) at (2,0) {${}_{\scalebox{0.6}{5}}$}; % 1st floor & 4
  \node (6-1) at (2.7,0) {${}_{\scalebox{0.6}{6}}$}; % 1st floor & 5
  \node at (1.3, -0.6) {${}_{\scalebox{0.6}{2}}$}; % 2nd floor & 6
  \node (4-2) at (2, -0.6) {${}_{\scalebox{0.6}{4}}$}; % 2nd floor & 7
  \node (5-2) at (2.7, -0.6) {${}_{\scalebox{0.6}{5}}$}; % 2nd floor & 8
  \node (3-2) at (2, -1.3) {${}_{\scalebox{0.6}{3}}$}; % 3rd floor & 9
  \node (4-3) at (2.7, -1.3) {${}_{\scalebox{0.6}{4}}$}; % 3rd floor & 10
  \node at (3.4, -1.3) {${}_{\scalebox{0.6}{2}}$}; % 3rd floor & 11
  \node (1-2) at (2, -2) {${}_{\scalebox{0.6}{1}}$}; % 4rd floor & 12
  \node (3-3) at (2.7, -2) {${}_{\scalebox{0.6}{3}}$}; % 4rd floor & 13
  \node (4-4) at (3.4, -2) {${}_{\scalebox{0.6}{4}}$}; % 4rd floor & 14
  \node (5-3) at (4.1, -2) {${}_{\scalebox{0.6}{5}}$}; % 4rd floor & 15
  \node (6-2) at (4.8, -2) {${}_{\scalebox{0.6}{6}}$}; % 4rd floor & 16
\end{tikzpicture}}
\hspace{-1cm}
\scalebox{1.4}{
\begin{tikzpicture}[->,>=stealth',auto,node distance=3cm,main node/.style={circle,draw,font=\sffamily\Large\bfseries}]
  \node at (-0.1,0) {${}_{\scalebox{0.5}{$d_{36}(\pi)$}}$}; % 1st floor & 1
  \node at (0.6,0) {${}_{\scalebox{0.5}{$d_{35}(\pi)$}}$}; % 1st floor & 2
  \node at (1.3,0) {${}_{\scalebox{0.5}{$d_{34}(\pi)$}}$}; % 1st floor & 3
  \node at (2,0) {${}_{\scalebox{0.5}{$d_{33}(\pi)$}}$}; % 1st floor & 4
  \node at (2.7,0) {${}_{\scalebox{0.5}{$d_{32}(\pi)$}}$}; % 1st floor & 5
  \node at (1.3, -0.6) {${}_{\scalebox{0.5}{$d_{31}(\pi)$}}$}; % 2nd floor & 6
  \node at (2, -0.6) {${}_{\scalebox{0.5}{$d_{30}(\pi)$}}$}; % 2nd floor & 7
  \node at (2.7, -0.6) {${}_{\scalebox{0.5}{$d_{29}(\pi)$}}$}; % 2nd floor & 8
  \node at (2, -1.3) {${}_{\scalebox{0.5}{$d_{28}(\pi)$}}$}; % 3rd floor & 9
  \node at (2.7, -1.3) {${}_{\scalebox{0.5}{$d_{27}(\pi)$}}$}; % 3rd floor & 10
  \node at (3.4, -1.3) {${}_{\scalebox{0.5}{$d_{26}(\pi)$}}$}; % 3rd floor & 11
  \node at (2, -2) {${}_{\scalebox{0.5}{$d_{25}(\pi)$}}$}; % 4rd floor & 12
  \node at (2.7, -2) {${}_{\scalebox{0.5}{$d_{24}(\pi)$}}$}; % 4rd floor & 13
  \node at (3.4, -2) {${}_{\scalebox{0.5}{$d_{23}(\pi)$}}$}; % 4rd floor & 14
  \node at (4.1, -2) {${}_{\scalebox{0.5}{$d_{22}(\pi)$}}$}; % 4rd floor & 15
  \node at (4.8, -2) {${}_{\scalebox{0.5}{$d_{21}(\pi)$}}$}; % 4rd floor & 16
\end{tikzpicture}}
\end{split}
\end{equation*}

{\it Case 2.} Type $\text{E}_7$. In this case, the array ${\bf d}(\pi)$ is given by
\begin{equation*}
\scalebox{0.97}{
\begin{tikzpicture}[->,>=stealth',auto,node distance=3cm,main node/.style={circle,draw,font=\sffamily\Large\bfseries}]
  %% node
  %%%
  \node (7-1) at (-0.8,0) {${}_{7}$}; % 1st floor & 0
  \node (6-1) at (-0.1,0) {${}_{6}$}; % 1st floor & 1
  \node (5-1) at (0.6,0) {${}_{5}$}; % 1st floor & 2
  \node (4-1) at (1.3,0) {${}_{4}$}; % 1st floor & 3
  \node (3-1) at (2,0) {${}_{3}$}; % 1st floor & 4
  \node (1-1) at (2.7,0) {${}_{1}$}; % 1st floor & 5

  \node (2-1) at (1.3, -0.6) {${}_{2}$}; % 2nd floor & 6
  \node (4-2) at (2, -0.6) {${}_{4}$}; % 2nd floor & 7
  \node (3-2) at (2.7, -0.6) {${}_{3}$}; % 2nd floor & 8

  \node (5-2) at (2, -1.3) {${}_{5}$}; % 3rd floor & 9
  \node (4-3) at (2.7, -1.3) {${}_{4}$}; % 3rd floor & 10
  \node (2-2) at (3.4, -1.3) {${}_{2}$}; % 3rd floor & 11

  \node (6-2) at (2, -2) {${}_{6}$}; % 4th floor & 12
  \node (5-3) at (2.7, -2) {${}_{5}$}; % 4th floor & 13
  \node (4-4) at (3.4, -2) {${}_{4}$}; % 4th floor & 14
  \node (3-3) at (4.1, -2) {${}_{3}$}; % 4th floor & 15
  \node (1-2) at (4.8, -2) {${}_{1}$}; % 4th floor & 16

  \node (7-2) at (2, -2.7) {${}_{7}$}; % 5th floor & 17
  \node (6-3) at (2.7, -2.7) {${}_{6}$}; % 5th floor & 17
  \node (5-4) at (3.4, -2.7) {${}_{5}$}; % 5th floor & 18
  \node (4-5) at (4.1, -2.7) {${}_{4}$}; % 5th floor & 19
  \node (3-4) at (4.8, -2.7) {${}_{3}$}; % 5th floor & 20
  
  \node (2-3) at (4.1, -3.4) {${}_{2}$}; % 6th floor & 22
  \node (4-6) at (4.8, -3.4) {${}_{4}$}; % 6th floor & 23

  \node (5-5) at (4.8, -4.1) {${}_{5}$}; % 7th floor & 25

  \node (6-4) at (4.8, -4.8) {${}_{6}$}; % 8th floor & 26
  
  \node (7-3) at (4.8, -5.5) {${}_{7}$}; % 8th floor & 26
\end{tikzpicture}
}
\scalebox{0.65}{
\begin{tikzpicture}  
  %% nodes
  %% nilradical part
  \node at (-1.4,0) {\scalebox{1}{$d_{63}(\pi)$}}; % 1st floor & 1
  \node at (-0.1,0) {\scalebox{1}{$d_{62}(\pi)$}}; % 1st floor & 2
  \node at (1.2,0) {\scalebox{1}{$d_{61}(\pi)$}}; % 1st floor & 3
  \node at (2.5,0) {\scalebox{1}{$d_{60}(\pi)$}}; % 1st floor & 4
  \node at (3.8,0) {\scalebox{1}{$d_{59}(\pi)$}}; % 1st floor & 5
  \node at (5.1,0) {\scalebox{1}{$d_{58}(\pi)$}}; % 1st floor & 6

  \node at (2.5, -1) {\scalebox{1}{$d_{57}(\pi)$}}; % 2nd floor & 7
  \node at (3.8, -1) {\scalebox{1}{$d_{56}(\pi)$}}; % 2nd floor & 8
  \node at (5.1, -1) {\scalebox{1}{$d_{55}(\pi)$}}; % 2nd floor & 9
  
  \node at (3.8, -2) {\scalebox{1}{$d_{54}(\pi)$}}; % 3rd floor & 10
  \node at (5.1, -2) {\scalebox{1}{$d_{53}(\pi)$}}; % 3rd floor & 11
  \node at (6.4, -2) {\scalebox{1}{$d_{52}(\pi)$}}; % 3rd floor & 12
  
  \node at (3.8, -3) {\scalebox{1}{$d_{51}(\pi)$}}; % 4th floor & 13
  \node at (5.1, -3) {\scalebox{1}{$d_{50}(\pi)$}}; % 4th floor & 14
  \node at (6.4, -3) {\scalebox{1}{$d_{49}(\pi)$}}; % 4th floor & 15
  \node at (7.7, -3) {\scalebox{1}{$d_{48}(\pi)$}}; % 4th floor & 16
  \node at (9, -3) {\scalebox{1}{$d_{47}(\pi)$}}; % 4th floor & 17
  \node at (3.8, -4) {\scalebox{1}{$d_{46}(\pi)$}}; % 18
  
  \node at (5.1, -4) {\scalebox{1}{$d_{45}(\pi)$}}; % 5th floor & 19
  \node at (6.4, -4) {\scalebox{1}{$d_{44}(\pi)$}}; % 5th floor & 20
  \node at (7.7, -4) {\scalebox{1}{$d_{43}(\pi)$}}; % 5th floor & 21
  \node at (9, -4) {\scalebox{1}{$d_{42}(\pi)$}}; % 5th floor & 22
  \node at (7.7, -5) {\scalebox{1}{$d_{41}(\pi)$}}; % 6th floor & 23
  \node at (9, -5) {\scalebox{1}{$d_{40}(\pi)$}}; % 6th floor & 24
  \node at (9, -6) {\scalebox{1}{$d_{39}(\pi)$}}; % 7th floor & 25
  \node at (9, -7) {\scalebox{1}{$d_{38}(\pi)$}}; % 8th floor & 26
  \node at (9, -8) {\scalebox{1}{$d_{37}(\pi)$}}; % 27
\end{tikzpicture}
}
\end{equation*}
Then it is obvious that the map
\begin{equation} \label{eq:from T' to D}
\xymatrixcolsep{3pc}\xymatrixrowsep{0pc}
\xymatrix{
 \mathcal{T}^{\,'} \ \ar@{->}[r] & \ \mathcal{D} \\
 \pi \ \ar@{|->}[r] & \ {\bf d}(\pi)
}
\end{equation}
is injective since $\pi_{\J}$ is fixed by $(\tilde{\nu}_0, \dots, \tilde{\nu}_{N-M})$.
Let $\mathcal{D}^{\J}$ be the image of $\mathcal{T}^{\,'}$ under the above map.
Thanks to Section \ref{subsec:j0-trails}, we know the complete description of the image $\mathcal{D}^{\J}$.

\begin{ex} \label{ex:array in E7}
{\em 
Let us consider the case of type $\text{E}_7$. Let $\pi \in \mathcal{T}^{\,'}$ be the ${\bf j}_0$-trail corresponding to
\begin{equation*}
({\bf 7}, {\bf 6}, {\bf 5}, {\color{gray} 4}, {\color{gray}  2}, {\color{gray}  3}, {\bf  4}, {\color{gray}  5}, {\color{gray}  6}, {\color{gray}  7}, {\color{gray}  1}, {\color{gray}  3}, {\color{gray}  4}, {\color{gray}  5}, {\color{gray}  6}, {\bf 2}, {\color{gray}  4}, {\color{gray}  5}, {\bf 3}, {\color{gray} 4}, {\color{gray} 2}, {\color{gray} 1}, {\color{gray} 3}, {\bf 4}, {\bf 5}, {\bf 6}, {\bf 7}).
\end{equation*}
Then ${\bf d}(\pi)$ is given by
\begin{equation*}
\scalebox{1.1}{
\begin{tikzpicture}[->,>=stealth',auto,node distance=3cm,main node/.style={circle,draw,font=\sffamily\Large\bfseries}]
  %% node
  %%%
  \node (7-1) at (-0.8,0) {${}_{\bf 7}$}; % 1st floor & 0
  \node (6-1) at (-0.1,0) {${}_{\bf 6}$}; % 1st floor & 1
  \node (5-1) at (0.6,0) {${}_{\bf 5}$}; % 1st floor & 2
  \node (4-1) at (1.3,0) {${}_{\bf 4}$}; % 1st floor & 3
  \node (3-1) at (2,0) {${}_{\color{gray} 3}$}; % 1st floor & 4
  \node (1-1) at (2.7,0) {${}_{\color{gray} 1}$}; % 1st floor & 5

  \node (2-1) at (1.3, -0.6) {${}_{\color{gray} 2}$}; % 2nd floor & 6
  \node (4-2) at (2, -0.6) {${}_{\color{gray} 4}$}; % 2nd floor & 7
  \node (3-2) at (2.7, -0.6) {${}_{\bf 3}$}; % 2nd floor & 8

  \node (5-2) at (2, -1.3) {${}_{\color{gray} 5}$}; % 3rd floor & 9
  \node (4-3) at (2.7, -1.3) {${}_{\color{gray} 4}$}; % 3rd floor & 10
  \node (2-2) at (3.4, -1.3) {${}_{\bf 2}$}; % 3rd floor & 11

  \node (6-2) at (2, -2) {${}_{\color{gray} 6}$}; % 4th floor & 12
  \node (5-3) at (2.7, -2) {${}_{\color{gray} 5}$}; % 4th floor & 13
  \node (4-4) at (3.4, -2) {${}_{\color{gray} 4}$}; % 4th floor & 14
  \node (3-3) at (4.1, -2) {${}_{\color{gray} 3}$}; % 4th floor & 15
  \node (1-2) at (4.8, -2) {${}_{\color{gray} 1}$}; % 4th floor & 16

  \node (7-2) at (2, -2.7) {${}_{\color{gray} 7}$}; % 5th floor & 17
  \node (6-3) at (2.7, -2.7) {${}_{\color{gray} 6}$}; % 5th floor & 17
  \node (5-4) at (3.4, -2.7) {${}_{\color{gray} 5}$}; % 5th floor & 18
  \node (4-5) at (4.1, -2.7) {${}_{\bf 4}$}; % 5th floor & 19
  \node (3-4) at (4.8, -2.7) {${}_{\color{gray} 3}$}; % 5th floor & 20
  
  \node (2-3) at (4.1, -3.4) {${}_{\color{gray} 2}$}; % 6th floor & 22
  \node (4-6) at (4.8, -3.4) {${}_{\color{gray} 4}$}; % 6th floor & 23

  \node (5-5) at (4.8, -4.1) {${}_{\bf 5}$}; % 7th floor & 25

  \node (6-4) at (4.8, -4.8) {${}_{\bf 6}$}; % 8th floor & 26
  
  \node (7-3) at (4.8, -5.5) {${}_{\bf 7}$}; % 8th floor & 26
\end{tikzpicture}
}
\hspace{-1.7cm}
\scalebox{0.8}{
\begin{tikzpicture}  
  %% nodes
  %% nilradical part
  \node at (-1.4,0) {\scalebox{0.8}{$\bf 1$}}; % 1st floor & 1
  \node at (-0.1,0) {\scalebox{0.8}{$\bf 1$}}; % 1st floor & 2
  \node at (1.2,0) {\scalebox{0.8}{$\bf 1$}}; % 1st floor & 3
  \node at (2.5,0) {\scalebox{0.8}{$\bf 1$}}; % 1st floor & 4
  \node at (3.8,0) {\scalebox{0.8}{$\color{gray} 0$}}; % 1st floor & 5
  \node at (5.1,0) {\scalebox{0.8}{$\color{gray} 0$}}; % 1st floor & 6

  \node at (2.5, -1) {\scalebox{0.8}{$\color{gray} 0$}}; % 2nd floor & 7
  \node at (3.8, -1) {\scalebox{0.8}{$\color{gray} 0$}}; % 2nd floor & 8
  \node at (5.1, -1) {\scalebox{0.8}{$\bf 1$}}; % 2nd floor & 9
  
  \node at (3.8, -2) {\scalebox{0.8}{$\color{gray} 0$}}; % 3rd floor & 10
  \node at (5.1, -2) {\scalebox{0.8}{$\color{gray} 0$}}; % 3rd floor & 11
  \node at (6.4, -2) {\scalebox{0.8}{$\bf 1$}}; % 3rd floor & 12
  
  \node at (3.8, -3) {\scalebox{0.8}{$\color{gray} 0$}}; % 4th floor & 13
  \node at (5.1, -3) {\scalebox{0.8}{$\color{gray} 0$}}; % 4th floor & 14
  \node at (6.4, -3) {\scalebox{0.8}{$\color{gray} 0$}}; % 4th floor & 15
  \node at (7.7, -3) {\scalebox{0.8}{$\color{gray} 0$}}; % 4th floor & 16
  \node at (9, -3) {\scalebox{0.8}{$\color{gray} 0$}}; % 4th floor & 17
  \node at (3.8, -4) {\scalebox{0.8}{$\color{gray} 0$}}; % 18
  
  \node at (5.1, -4) {\scalebox{0.8}{$\color{gray} 0$}}; % 5th floor & 19
  \node at (6.4, -4) {\scalebox{0.8}{$\color{gray} 0$}}; % 5th floor & 20
  \node at (7.7, -4) {\scalebox{0.8}{$\bf 1$}}; % 5th floor & 21
  \node at (9, -4) {\scalebox{0.8}{$\color{gray} 0$}}; % 5th floor & 22
  \node at (7.7, -5) {\scalebox{0.8}{$\color{gray} 0$}}; % 6th floor & 23
  \node at (9, -5) {\scalebox{0.8}{$\color{gray} 0$}}; % 6th floor & 24
  \node at (9, -6) {\scalebox{0.8}{$\bf 1$}}; % 7th floor & 25
  \node at (9, -7) {\scalebox{0.8}{$\bf 1$}}; % 8th floor & 26
  \node at (9, -8) {\scalebox{0.8}{$\bf 1$}}; % 27
\end{tikzpicture}
}
\end{equation*}
}
\end{ex}
\vskip 1mm

By Remark \ref{rem:connectedness}, for any $\pi \in \mathcal{T}^{\,'}$, there exists a sequence $\pi_1, \dots, \pi_{\ell}$ of $\mathcal{T}^{\,'}$ such that $\pi_{\ell} = \pi$ and $\pi_{k+1}$ is obtained from $\pi_k$ by moving a bold number to the right for $0 \le k \le \ell-1$.
Let $i_k$ be the bold number moved in $\pi_k$. Then
under the identification via the map \eqref{eq:from T' to D}, the array ${\bf d}(\pi_{k+1})$ is obtained from ${\bf d}(\pi_k)$ by moving $1$ on ${\bf d}(\pi_k)$ located at the position of $i_k$ along the dashed arrow between $i_k$'s in \eqref{eq:dashed arrows}.

\begin{equation} \label{eq:dashed arrows}
\scalebox{1.2}{
\begin{tikzpicture}[<-,>=stealth',auto,node distance=3cm,main node/.style={circle,draw,font=\sffamily\Large\bfseries}, baseline=(current  bounding  box.center)]
  \node (1-1) at (-0.1,0) {${}_{\scalebox{0.6}{1}}$}; % 1st floor & 1
  \node (3-1) at (0.6,0) {${}_{\scalebox{0.6}{3}}$}; % 1st floor & 2
  \node (4-1) at (1.3,0) {${}_{\scalebox{0.6}{4}}$}; % 1st floor & 3
  \node (5-1) at (2,0) {${}_{\scalebox{0.6}{5}}$}; % 1st floor & 4
  \node (6-1) at (2.7,0) {${}_{\scalebox{0.6}{6}}$}; % 1st floor & 5
  
  \node at (1.3, -0.6) {${}_{\scalebox{0.6}{2}}$}; % 2nd floor & 6
  \node (4-2) at (2, -0.6) {${}_{\scalebox{0.6}{4}}$}; % 2nd floor & 7
  \node (5-2) at (2.7, -0.6) {${}_{\scalebox{0.6}{5}}$}; % 2nd floor & 8
  
  \node (3-2) at (2, -1.3) {${}_{\scalebox{0.6}{3}}$}; % 3rd floor & 9
  \node (4-3) at (2.7, -1.3) {${}_{\scalebox{0.6}{4}}$}; % 3rd floor & 10
  \node at (3.4, -1.3) {${}_{\scalebox{0.6}{2}}$}; % 3rd floor & 11
  
  \node (1-2) at (2, -2) {${}_{\scalebox{0.6}{1}}$}; % 4rd floor & 12
  \node (3-3) at (2.7, -2) {${}_{\scalebox{0.6}{3}}$}; % 4rd floor & 13
  \node (4-4) at (3.4, -2) {${}_{\scalebox{0.6}{4}}$}; % 4rd floor & 14
  \node (5-3) at (4.1, -2) {${}_{\scalebox{0.6}{5}}$}; % 4rd floor & 15
  \node (6-2) at (4.8, -2) {${}_{\scalebox{0.6}{6}}$}; % 4rd floor & 16
  
    \path[every node/.style={font=\sffamily\small}]
	(1-1) edge[bend left=-25, dashed] node [right] {} (1-2) 
	
	(3-1) edge[bend left=-25, dashed] node [right] {} (3-2) 
	(3-2) edge[dashed] node [right] {} (3-3)
	
	(4-1) edge[dashed] node [right] {} (4-2)
	(4-2) edge[dashed] node [right] {} (4-3)
	(4-3) edge[dashed] node [right] {} (4-4)
	
	(5-1) edge[dashed] node [right] {} (5-2)
	(5-2) edge[bend left=25, dashed] node [right] {} (5-3) 
	
	(6-1) edge[bend left=25, dashed] node [right] {} (6-2) 
	;
\end{tikzpicture}
\hspace{-2cm}
\begin{tikzpicture}[<-,>=stealth',auto,node distance=3cm,main node/.style={circle,draw,font=\sffamily\Large\bfseries}, baseline=(current  bounding  box.center)]
  %% node
  %%%
  \node (7-1) at (-0.8,0) {${}_{7}$}; % 1st floor & 0
  \node (6-1) at (-0.1,0) {${}_{6}$}; % 1st floor & 1
  \node (5-1) at (0.6,0) {${}_{5}$}; % 1st floor & 2
  \node (4-1) at (1.3,0) {${}_{4}$}; % 1st floor & 3
  \node (3-1) at (2,0) {${}_{3}$}; % 1st floor & 4
  \node (1-1) at (2.7,0) {${}_{1}$}; % 1st floor & 5

  \node (2-1) at (1.3, -0.6) {${}_{2}$}; % 2nd floor & 6
  \node (4-2) at (2, -0.6) {${}_{4}$}; % 2nd floor & 7
  \node (3-2) at (2.7, -0.6) {${}_{3}$}; % 2nd floor & 8

  \node (5-2) at (2, -1.3) {${}_{5}$}; % 3rd floor & 9
  \node (4-3) at (2.7, -1.3) {${}_{4}$}; % 3rd floor & 10
  \node (2-2) at (3.4, -1.3) {${}_{2}$}; % 3rd floor & 11

  \node (6-2) at (2, -2) {${}_{6}$}; % 4th floor & 12
  \node (5-3) at (2.7, -2) {${}_{5}$}; % 4th floor & 13
  \node (4-4) at (3.4, -2) {${}_{4}$}; % 4th floor & 14
  \node (3-3) at (4.1, -2) {${}_{3}$}; % 4th floor & 15
  \node (1-2) at (4.8, -2) {${}_{1}$}; % 4th floor & 16

  \node (7-2) at (2, -2.7) {${}_{7}$}; % 5th floor & 17
  \node (6-3) at (2.7, -2.7) {${}_{6}$}; % 5th floor & 17
  \node (5-4) at (3.4, -2.7) {${}_{5}$}; % 5th floor & 18
  \node (4-5) at (4.1, -2.7) {${}_{4}$}; % 5th floor & 19
  \node (3-4) at (4.8, -2.7) {${}_{3}$}; % 5th floor & 20
  
  \node (2-3) at (4.1, -3.4) {${}_{2}$}; % 6th floor & 22
  \node (4-6) at (4.8, -3.4) {${}_{4}$}; % 6th floor & 23

  \node (5-5) at (4.8, -4.1) {${}_{5}$}; % 7th floor & 25

  \node (6-4) at (4.8, -4.8) {${}_{6}$}; % 8th floor & 26
  
  \node (7-3) at (4.8, -5.5) {${}_{7}$}; % 8th floor & 26

  \path[every node/.style={font=\sffamily\small}]
	(1-1) edge[bend left=30, dashed] node [right] {} (1-2)  
  	(2-1) edge[bend left=3, dotted, thick] node [right] {} (2-2)
	(2-2) edge[bend left=-3, dotted, thick] node [right] {} (2-3)
	(3-1) edge[dashed] node [right] {} (3-2)
	(3-2) edge[bend left=30, dashed] node [right] {} (3-3)
	(3-3) edge[dashed] node [right] {} (3-4)
	(4-1) edge[dashed] node [right] {} (4-2)
	(4-2) edge[dashed] node [right] {} (4-3)
	(4-3) edge[dashed] node [right] {} (4-4)
	(4-4) edge[dashed] node [right] {} (4-5)
	(4-5) edge[dashed] node [right] {} (4-6)
	(5-1) edge[bend right=30, dashed] node [left] {} (5-2)
	(5-2) edge[dashed] node [right] {} (5-3)
	(5-3) edge[dashed] node [right] {} (5-4)
	(5-4) edge[bend right=30, dashed] node [left] {} (5-5)
	(6-1) edge[bend right=30, dashed] node [left] {} (6-2)
	(6-2) edge[dashed] node [right] {} (6-3)
	(6-3) edge[bend right=30, dashed] node [left] {} (6-4)
	(7-1) edge[bend right=30, dashed] node [left] {} (7-2)
	(7-2) edge[bend right=30, dashed] node [left] {} (7-3)
	;
\end{tikzpicture}
}
\end{equation}

\begin{ex}
{\em 
Let $\pi$ be the ${\bf j}_0$-trail in Example \ref{ex:array in E7}
and let $\pi'$ be the ${\bf j}_0$-trail corresponding to the following sequence
\begin{equation*}
({\bf 7}, {\bf 6}, {\bf 5}, {\color{gray} 4}, {\color{gray}  2}, {\color{gray}  3}, {\bf  4}, {\color{gray}  5}, {\color{gray}  6}, {\color{gray}  7}, {\color{gray}  1}, {\color{gray}  3}, {\color{gray}  4}, {\color{gray}  5}, {\color{gray}  6}, {\color{gray}  2}, {\color{gray}  4}, {\color{gray}  5}, {\bf 3}, {\color{gray} 4}, {\bf 2}, {\color{gray} 1}, {\color{gray} 3}, {\bf 4}, {\bf 5}, {\bf 6}, {\bf 7}),
\end{equation*}
which is obtained from $\pi$ by moving ${\bf 2}$ to the right.
Then we have
\vskip 1mm
\begin{equation*}
\scalebox{0.8}{
\begin{tikzpicture}[<-,>=stealth',auto,node distance=3cm,main node/.style={circle,draw,font=\sffamily\Large\bfseries}, baseline=(current  bounding  box.center)]
  %% nodes
  %% nilradical part
  \node at (-3, -4) {\scalebox{1.2}{${\bf d}(\pi')$ = }};
  
  \node at (-1.4,0) {\scalebox{0.9}{$\bf 1$}}; % 1st floor & 1
  \node at (-0.1,0) {\scalebox{0.9}{$\bf 1$}}; % 1st floor & 2
  \node at (1.2,0) {\scalebox{0.9}{$\bf 1$}}; % 1st floor & 3
  \node at (2.5,0) {\scalebox{0.9}{$\bf 1$}}; % 1st floor & 4
  \node at (3.8,0) {\scalebox{0.8}{$\color{gray} 0$}}; % 1st floor & 5
  \node at (5.1,0) {\scalebox{0.8}{$\color{gray} 0$}}; % 1st floor & 6

  \node (2-1) at (2.5, -1) {\scalebox{0.9}{$\bf 1$}}; % 2nd floor & 7
  \node at (3.8, -1) {\scalebox{0.8}{$\color{gray} 0$}}; % 2nd floor & 8
  \node at (5.1, -1) {\scalebox{0.8}{$1$}}; % 2nd floor & 9
  
  \node at (3.8, -2) {\scalebox{0.8}{$\color{gray} 0$}}; % 3rd floor & 10
  \node at (5.1, -2) {\scalebox{0.8}{$\color{gray} 0$}}; % 3rd floor & 11
  \node (2-2) at (6.4, -2) {\scalebox{0.8}{$\color{gray} 0$}}; % 3rd floor & 12
  
  \node at (3.8, -3) {\scalebox{0.8}{$\color{gray} 0$}}; % 4th floor & 13
  \node at (5.1, -3) {\scalebox{0.8}{$\color{gray} 0$}}; % 4th floor & 14
  \node at (6.4, -3) {\scalebox{0.8}{$\color{gray} 0$}}; % 4th floor & 15
  \node at (7.7, -3) {\scalebox{0.8}{$\color{gray} 0$}}; % 4th floor & 16
  \node at (9, -3) {\scalebox{0.8}{$\color{gray} 0$}}; % 4th floor & 17
  \node at (3.8, -4) {\scalebox{0.8}{$\color{gray} 0$}}; % 18
  
  \node at (5.1, -4) {\scalebox{0.8}{$\color{gray} 0$}}; % 5th floor & 19
  \node at (6.4, -4) {\scalebox{0.8}{$\color{gray} 0$}}; % 5th floor & 20
  \node at (7.7, -4) {\scalebox{0.8}{$\bf 1$}}; % 5th floor & 21
  \node at (9, -4) {\scalebox{0.8}{$\color{gray} 0$}}; % 5th floor & 22
  \node at (7.7, -5) {\scalebox{0.8}{$\color{gray}  0$}}; % 6th floor & 23
  \node at (9, -5) {\scalebox{0.8}{$\color{gray} 0$}}; % 6th floor & 24
  \node at (9, -6) {\scalebox{0.9}{$\bf 1$}}; % 7th floor & 25
  \node at (9, -7) {\scalebox{0.9}{$\bf 1$}}; % 8th floor & 26
  \node at (9, -8) {\scalebox{0.9}{$\bf 1$}}; % 27
  
  \path[every node/.style={font=\sffamily\small}]
  (2-1) edge[bend left=3, dashed, thick] node [right] {} (2-2);
\end{tikzpicture}
}
\end{equation*}
}
\end{ex}
Under the enumeration of $d_k(\pi)$ on ${\bf d}(\pi)$, for ${\bf p} \in \mathcal{P}_n$, we define ${\bf d}({\bf p}) = (d_k) \in \mathcal{D}$ by 
\begin{equation*}
	d_k = 
	\begin{cases}
		0 & \text{${\bf p} \cap \Delta^{\J}$ passes the position of $d_k$}, \\
		1 & \text{otherwise}.
	\end{cases}
\end{equation*}

\begin{ex} \label{ex:P to D}
{\em 
Let us consider the following path ${\bf p} \in \mathcal{P}_7$ in Example \ref{ex:paths}.
\begin{equation*}
\begin{tikzpicture}[scale=1.1, baseline=(current  bounding  box.center)]
  %% node
  %%%
  \node at (-0.8,0) {${}_{\circ}$}; % 1st floor & 0
  \node at (-0.1,0) {${}_{\circ}$}; % 1st floor & 1
  \node at (0.6,0) {${}_{\circ}$}; % 1st floor & 2
  \node at (1.3,0) {${}_{\circ}$}; % 1st floor & 3
  \node at (2,0) {${}_{\circ}$}; % 1st floor & 4
  \node at (2.7,0) {${}_{\circ}$}; % 1st floor & 5
  \node at (3.4,0) {${}_{\circ}$}; % 1st floor & 5
  \node at (4.1,0) {${}_{\circ}$}; % 1st floor & 5
  \node at (4.8,0) {${}_{\circ}$}; % 1st floor & 5

  \node at (-0.1,-0.6) {${}_{\circ}$}; % 1st floor & 1
  \node at (0.6,-0.6) {${}_{\circ}$}; % 1st floor & 2
  \node at (1.3, -0.6) {${}_{\circ}$}; % 2nd floor & 6
  \node at (2, -0.6) {${}_{\circ}$}; % 2nd floor & 7
  \node at (2.7, -0.6) {${}_{\circ}$}; % 2nd floor & 8
  \node at (3.4,-0.6) {${}_{\circ}$}; % 1st floor & 5
  \node at (4.1,-0.6) {${}_{\circ}$}; % 1st floor & 5
  \node at (4.8,-0.6) {${}_{\circ}$}; % 1st floor & 5

  \node at (0.6,-1.3) {${}_{\circ}$}; % 1st floor & 2
  \node at (1.3, -1.3) {${}_{\circ}$}; % 2nd floor & 6
  \node at (2, -1.3) {${}_{\circ}$}; % 3rd floor & 9
  \node at (2.7, -1.3) {${}_{\circ}$}; % 3rd floor & 10
  \node at (3.4, -1.3) {${}_{\circ}$}; % 3rd floor & 11
  \node at (4.1,-1.3) {${}_{\circ}$}; % 1st floor & 5
  \node at (4.8,-1.3) {${}_{\circ}$}; % 1st floor & 5

  \node at (1.3, -2) {${}_{\circ}$}; % 2nd floor & 6
  \node at (2, -2) {${}_{\circ}$}; % 4th floor & 12
  \node at (2.7, -2) {${}_{\circ}$}; % 4th floor & 13
  \node at (3.4, -2) {${}_{\circ}$}; % 4th floor & 14
  \node at (4.1, -2) {${}_{\circ}$}; % 4th floor & 15
  \node at (4.8, -2) {${}_{\circ}$}; % 4th floor & 16

  \node at (2, -2.7) {${}_{\circ}$}; % 5th floor & 17
  \node at (2.7, -2.7) {${}_{\circ}$}; % 5th floor & 17
  \node at (3.4, -2.7) {${}_{\circ}$}; % 5th floor & 18
  \node at (4.1, -2.7) {${}_{\circ}$}; % 5th floor & 19
  \node at (4.8, -2.7) {${}_{\circ}$}; % 5th floor & 20
  
  \node at (2.7, -3.4) {${}_{\circ}$}; % 5th floor & 17
  \node at (3.4, -3.4) {${}_{\circ}$}; % 5th floor & 18
  \node at (4.1, -3.4) {${}_{\circ}$}; % 6th floor & 22
  \node at (4.8, -3.4) {${}_{\circ}$}; % 6th floor & 23
  
  \node at (3.4, -4.1) {${}_{\circ}$}; % 5th floor & 18
  \node at (4.1, -4.1) {${}_{\circ}$}; % 6th floor & 22
  \node at (4.8, -4.1) {${}_{\circ}$}; % 7th floor & 25
  
  \node at (4.1, -4.8) {${}_{\circ}$}; % 6th floor & 22
  \node at (4.8, -4.8) {${}_{\circ}$}; % 8th floor & 26
  
  \node at (4.8, -5.5) {${}_{\circ}$}; % 8th floor & 26
  %%
  
  %% triple path
  \draw[-, line width=0.5mm] (4.2, 0) -- (4.7, 0);
  \draw[-, line width=0.5mm] (3.5, 0) -- (4, 0);
  \draw[-, line width=0.5mm] (2.8, 0) -- (3.3, 0);
  \draw[-, line width=0.5mm] (2.7, -0.1) -- (2.7, -0.5);
  \draw[-, line width=0.5mm] (2.1, -0.6) -- (2.6, -0.6);
  \draw[-, line width=0.5mm] (1.4, -0.6) -- (1.9, -0.6);
  \draw[-, line width=0.5mm] (1.3, -0.7) -- (1.3, -1.2);
  \draw[-, line width=0.5mm] (1.3, -1.4) -- (1.3, -1.9);
  
  \draw[-, line width=0.5mm] (4.1, -0.1) -- (4.1, -0.5);
  \draw[-, line width=0.5mm] (4.1, -0.7) -- (4.1, -1.2);
  \draw[-, line width=0.5mm] (4.1, -1.4) -- (4.1, -1.9);
  \draw[-, line width=0.5mm] (4.1, -2.1) -- (4.1, -2.6);
  \draw[-, line width=0.5mm] (4.1, -2.8) -- (4.1, -3.3);
  \draw[-, line width=0.5mm] (3.5, -3.4) -- (4, -3.4);
  \draw[-, line width=0.5mm] (3.4, -3.5) -- (3.4, -4);
  
  \draw[-, line width=0.5mm] (4.8, -0.1) -- (4.8, -0.5);
  \draw[-, line width=0.5mm] (4.8, -0.7) -- (4.8, -1.2);
  \draw[-, line width=0.5mm] (4.8, -1.4) -- (4.8, -1.9);
  \draw[-, line width=0.5mm] (4.8, -2.1) -- (4.8, -2.6);
  \draw[-, line width=0.5mm] (4.8, -2.8) -- (4.8, -3.3);
  \draw[-, line width=0.5mm] (4.8, -3.5) -- (4.8, -4);
  \draw[-, line width=0.5mm] (4.2, -4.1) -- (4.7, -4.1);
  \draw[-, line width=0.5mm] (4.1, -4.2) -- (4.1, -4.7);
  
  %% double path
   \draw[-, line width=0.5mm] (2.1, -1.3) -- (2.6, -1.3);
   \draw[-, line width=0.5mm] (2, -1.4) -- (2, -1.9);
   \draw[-, line width=0.5mm] (2, -2.1) -- (2, -2.6);
   \draw[-, line width=0.5mm] (2.7, -1.4) -- (2.7, -1.9);
   \draw[-, line width=0.5mm] (2.7, -2.1) -- (2.7, -2.6);
   \draw[-, line width=0.5mm] (2.7, -2.8) -- (2.7, -3.3);
\end{tikzpicture}
\end{equation*}
Then ${\bf p} \cap \Delta^{\J}$ and ${\bf d}({\bf p})$ are given by 
\begin{equation*}
\begin{tikzpicture}[baseline=(current  bounding  box.center)]
  %% node
  %%%
  \node at (-0.8,0) {${}_{\circ}$}; % 1st floor & 0
  \node at (-0.1,0) {${}_{\circ}$}; % 1st floor & 1
  \node at (0.6,0) {${}_{\circ}$}; % 1st floor & 2
  \node at (1.3,0) {${}_{\circ}$}; % 1st floor & 3
  \node at (2,0) {${}_{\circ}$}; % 1st floor & 4
  \node at (2.7,0) {${}_{\circ}$}; % 1st floor & 5

  \node at (1.3, -0.6) {${}_{\circ}$}; % 2nd floor & 6
  \node at (2, -0.6) {${}_{\circ}$}; % 2nd floor & 7
  \node at (2.7, -0.6) {${}_{\circ}$}; % 2nd floor & 8

  \node at (2, -1.3) {${}_{\circ}$}; % 3rd floor & 9
  \node at (2.7, -1.3) {${}_{\circ}$}; % 3rd floor & 10
  \node at (3.4, -1.3) {${}_{\circ}$}; % 3rd floor & 11

  \node at (2, -2) {${}_{\circ}$}; % 4th floor & 12
  \node at (2.7, -2) {${}_{\circ}$}; % 4th floor & 13
  \node at (3.4, -2) {${}_{\circ}$}; % 4th floor & 14
  \node at (4.1, -2) {${}_{\circ}$}; % 4th floor & 15
  \node at (4.8, -2) {${}_{\circ}$}; % 4th floor & 16

  \node at (2, -2.7) {${}_{\circ}$}; % 5th floor & 17
  \node at (2.7, -2.7) {${}_{\circ}$}; % 5th floor & 17
  \node at (3.4, -2.7) {${}_{\circ}$}; % 5th floor & 18
  \node at (4.1, -2.7) {${}_{\circ}$}; % 5th floor & 19
  \node at (4.8, -2.7) {${}_{\circ}$}; % 5th floor & 20
  
  \node at (4.1, -3.4) {${}_{\circ}$}; % 6th floor & 22
  \node at (4.8, -3.4) {${}_{\circ}$}; % 6th floor & 23

  \node at (4.8, -4.1) {${}_{\circ}$}; % 7th floor & 25

  \node at (4.8, -4.8) {${}_{\circ}$}; % 8th floor & 26
  
  \node at (4.8, -5.5) {${}_{\circ}$}; % 8th floor & 26
  %%
  
  %% triple path
  \draw[-, line width=0.5mm] (2.7, -0.1) -- (2.7, -0.5);
  \draw[-, line width=0.5mm] (2.1, -0.6) -- (2.6, -0.6);
  \draw[-, line width=0.5mm] (1.4, -0.6) -- (1.9, -0.6);

  \draw[-, line width=0.5mm] (4.1, -2.1) -- (4.1, -2.6);
  \draw[-, line width=0.5mm] (4.1, -2.8) -- (4.1, -3.3);
  
  \draw[-, line width=0.5mm] (4.8, -2.1) -- (4.8, -2.6);
  \draw[-, line width=0.5mm] (4.8, -2.8) -- (4.8, -3.3);
  \draw[-, line width=0.5mm] (4.8, -3.5) -- (4.8, -4);
  
  %% double path
   \draw[-, line width=0.5mm] (2.1, -1.3) -- (2.6, -1.3);
   \draw[-, line width=0.5mm] (2, -1.4) -- (2, -1.9);
   \draw[-, line width=0.5mm] (2, -2.1) -- (2, -2.6);
   \draw[-, line width=0.5mm] (2.7, -1.4) -- (2.7, -1.9);
   \draw[-, line width=0.5mm] (2.7, -2.1) -- (2.7, -2.6);
\end{tikzpicture}
\quad
\begin{tikzpicture}[baseline=(current  bounding  box.center)]
  %% node
  %%%
  \node at (-0.8,0) {\scalebox{0.7}{$\bf 1$}}; % 1st floor & 0
  \node at (-0.1,0) {\scalebox{0.7}{$\bf 1$}}; % 1st floor & 1
  \node at (0.6,0) {\scalebox{0.7}{$\bf 1$}}; % 1st floor & 2
  \node at (1.3,0) {\scalebox{0.7}{$\bf 1$}}; % 1st floor & 3
  \node at (2,0) {\scalebox{0.7}{$\bf 1$}}; % 1st floor & 4
  \node at (2.7,0) {\scalebox{0.7}{$0$}}; % 1st floor & 5

  \node at (1.3, -0.6) {\scalebox{0.7}{$0$}}; % 2nd floor & 6
  \node at (2, -0.6) {\scalebox{0.7}{$0$}}; % 2nd floor & 7
  \node at (2.7, -0.6) {\scalebox{0.7}{$0$}}; % 2nd floor & 8

  \node at (2, -1.3) {\scalebox{0.7}{$0$}}; % 3rd floor & 9
  \node at (2.7, -1.3) {\scalebox{0.7}{$0$}}; % 3rd floor & 10
  \node at (3.4, -1.3) {\scalebox{0.7}{$\bf 1$}}; % 3rd floor & 11

  \node at (2, -2) {\scalebox{0.7}{$0$}}; % 4th floor & 12
  \node at (2.7, -2) {\scalebox{0.7}{$0$}}; % 4th floor & 13
  \node at (3.4, -2) {\scalebox{0.7}{$\bf 1$}}; % 4th floor & 14
  \node at (4.1, -2) {\scalebox{0.7}{$0$}}; % 4th floor & 15
  \node at (4.8, -2) {\scalebox{0.7}{$0$}}; % 4th floor & 16

  \node at (2, -2.7) {\scalebox{0.7}{$0$}}; % 5th floor & 17
  \node at (2.7, -2.7) {\scalebox{0.7}{$0$}}; % 5th floor & 17
  \node at (3.4, -2.7) {\scalebox{0.7}{$\bf 1$}}; % 5th floor & 18
  \node at (4.1, -2.7) {\scalebox{0.7}{$0$}}; % 5th floor & 19
  \node at (4.8, -2.7) {\scalebox{0.7}{$0$}}; % 5th floor & 20
  
  \node at (4.1, -3.4) {\scalebox{0.7}{$0$}}; % 6th floor & 22
  \node at (4.8, -3.4) {\scalebox{0.7}{$0$}}; % 6th floor & 23

  \node at (4.8, -4.1) {\scalebox{0.7}{$0$}}; % 7th floor & 25

  \node at (4.8, -4.8) {\scalebox{0.7}{$\bf 1$}}; % 8th floor & 26
  
  \node at (4.8, -5.5) {\scalebox{0.7}{$\bf 1$}}; % 8th floor & 26
\end{tikzpicture}
\end{equation*} 
Note that the above array consisting of $0$'s and $1$'s corresponds to the sequence
\begin{equation*}
({\bf 7}, {\bf 6}, {\color{gray}  5}, {\color{gray} 4}, {\color{gray}  2}, {\color{gray}  3}, {\color{gray}   4}, {\bf  5}, {\color{gray}  6}, {\color{gray}  7}, {\color{gray}  1}, {\color{gray}  3}, {\bf  4}, {\color{gray}  5}, {\color{gray}  6}, {\bf 2}, {\color{gray}  4}, {\color{gray}  5}, {\color{gray}  3}, {\color{gray} 4}, {\color{gray} 2}, {\color{gray} 1}, {\bf 3}, {\bf 4}, {\bf 5}, {\bf 6}, {\bf 7}).
\end{equation*}
}
\end{ex}
\vskip 2mm

\begin{lem} \label{lem:from Pr to DJ}
{\em 
The map
\begin{equation*}
\xymatrixcolsep{3pc}\xymatrixrowsep{0pc}
\Psi : \xymatrix{
  \mathcal{P}_n \ \ar@{->}[r] & \ \mathcal{D}^{\J} \\
 {\bf p} \ar@{|->}[r] & {\bf d}({\bf p})
}
\end{equation*}
is well-defined and surjective. For ${\bf c} \in \B^{\J}$ and ${\bf p},\, {\bf p}' \in \Psi^{-1}({\bf d})$, we have
\begin{equation*}
	||{\bf c}||_{\bf p} = ||{\bf c}||_{{\bf p}'}.
\end{equation*}
}
\end{lem}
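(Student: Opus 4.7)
The plan is to establish the three assertions---well-definedness, surjectivity, and constancy of $||{\bf c}||_{\bf p}$ on the fibers---by leveraging the explicit classification of $\mathcal{D}^{\J}$ that was assembled in Section \ref{subsec:j0-trails}. First I would dispose of the constancy statement: by the very construction of ${\bf d}({\bf p})$, one has $d_k({\bf p}) = 0$ precisely when the position in $\Delta^{\J}$ corresponding to $d_k$ lies on ${\bf p} \cap \Delta^{\J}$. Consequently, the set ${\bf p} \cap \Delta^{\J}$ is entirely determined by the array ${\bf d}({\bf p})$, so
\[
||{\bf c}||_{\bf p} \;=\; \sum_{\beta \in {\bf p} \cap \Delta^{\J}} c_{\beta}
\]
depends only on the fiber of $\Psi$. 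This immediately yields $||{\bf c}||_{\bf p} = ||{\bf c}||_{{\bf p}'}$ whenever $\Psi({\bf p}) = \Psi({\bf p}')$.

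For well-definedness, I would match the geometric conditions of Definition \ref{df:paths in Pn} against the explicit gray/bold patterns of the ${\bf j}_0$-trails listed in Section \ref{subsec:j0-trails}. In type $\text{E}_6$, any triple path ${\bf p}=(p_1,p_2,p_3) \in \mathcal{P}_6$ traces, inside the arrangement $\Delta_9$, three staircase subpaths whose intersection with $\Delta^{\J} = \Asixb$ (or $\Asixa$) is the union of the dots lying on them; reading off zeros at these dots and ones elsewhere, and translating through the enumeration of $({\bf i}^{\J*})^{\mathrm{op}}$ at the start of Section \ref{subsec:j0-trails}, reproduces exactly one of the gray-number patterns in the type $\text{E}_6$ list. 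The type $\text{E}_7$ analysis splits in the same way as Sub-Cases 1 and 2 of Section \ref{subsec:j0-trails}: quadruple paths in $\mathcal{P}_7$ correspond to Sub-Case 1, while the double-triple configurations fit Sub-Case 2 (where the commuting letters $2$ and $3$ appear in the reversed order, forcing the split through the dots at $(3,4)$ or $(4,3)$ demanded by (iii)).

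Surjectivity would be handled via the connectedness observation of Remark \ref{rem:connectedness}: every trail in $\mathcal{T}^{\,'}$ is obtained from the extremal trail $\pi_0$ by a sequence of elementary moves, each sliding a bold number along a dashed arrow in \eqref{eq:dashed arrows}. I would first exhibit a base path ${\bf p}_0 \in \mathcal{P}_n$ with $\Psi({\bf p}_0) = {\bf d}(\pi_0)$, taking ${\bf p}_0$ to consist of the constituent paths that hug the top and right boundaries of $\Delta^{\J}$. Then I would show inductively that each elementary move on ${\bf d}(\pi)$ lifts to an elementary modification of a single constituent subpath of ${\bf p}$, preserving the diagonal-shifting conditions (i) of Definition \ref{df:paths in Pn}. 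The straight dashed arrows correspond to horizontal or vertical shifts of a single subpath and are routine.

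The main obstacle will be the type $\text{E}_7$ bent dashed arrows, which govern the transition between a quadruple path and a double-triple pair. Here I would verify case-by-case, using the two sub-lists of Section \ref{subsec:j0-trails}, that whenever a move crosses one of these bent arrows, the induced modification of the subpaths still lands in $\mathcal{P}_7$: the non-intersection requirement of (iv), the forced passage through $(4,1), (1,4), (2,6), (6,2)$ and the avoidance of $(3,3)$ in (iii), and the correct enumeration placing $\alpha_s^{(1)}, \alpha_s^{(2)}$ between the other three paths all have to be rechecked. I anticipate this to be a finite but attention-intensive combinatorial verification, closely paralleling the type $\text{D}$ argument in \cite[Section 5.2]{JK19}.
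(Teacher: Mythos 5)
Your overall strategy coincides with the paper's: the constancy of $||{\bf c}||_{\bf p}$ on fibers is immediate because the $0$-entries of ${\bf d}({\bf p})$ recover ${\bf p}\cap\Delta^{\J}$, and both well-definedness and surjectivity reduce to the explicit classification of $\mathcal{T}^{\,'}$ in Section \ref{subsec:j0-trails} together with the connectedness of Remark \ref{rem:connectedness}. The paper organizes the two remaining assertions more tightly than you do: it introduces an operator $\texttt{p}_i$ on $\mathcal{P}_n$ that slides the relevant dot of ${\bf p}^c\cap\Delta^{\J}$ labelled $i$ along the dashed arrows of \eqref{eq:dashed arrows}, shows every path is connected by such moves to the unique base path realizing $\pi_0'$ (not $\pi_0$), and then obtains well-definedness by induction along the chain and surjectivity by reversing it. Your separate ``direct matching'' for well-definedness and ``lifting of elementary moves'' for surjectivity amount to the same finite verification, just packaged differently, so this is not a substantive divergence.

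One concrete claim in your sketch is wrong and would derail the type $\text{E}_7$ verification if followed literally: quadruple paths do \emph{not} correspond to Sub-Case 1, nor double--triple pairs to Sub-Case 2. The sub-case split only records which of the two commuting letters $2,3$ occupying the fifth and sixth slots of \eqref{eq:minimal seq} carries the bold mark, and this cuts across the distinction between the two kinds of elements of $\mathcal{P}_7$. The correct dichotomy (stated in the remark following the lemma) is whether the array carries a $1$ at the position of the letter $2$ in the third row of \eqref{eq:dashed arrows}, i.e.\ at the root $\beta_{12}$ of the convex order \eqref{eq:positive roots of E7}: double--triple pairs are exactly the paths whose array has a $1$ there, and quadruple paths those with a $0$ there. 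For instance, the double--triple pair of Example \ref{ex:P to D} produces the bold subsequence $(7,6,5,4,2,3,4,5,6,7)$, so its array sits in the Sub-Case 1 list; both sub-lists contain arrays of both kinds. Your proposed case organization must therefore be replaced by the $\beta_{12}$ criterion before the ``attention-intensive combinatorial verification'' of conditions (iii)--(iv) of Definition \ref{df:paths in Pn} can actually be carried out.
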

\begin{proof}
We state the proof for type $\text{E}_7$ with $r = 7$. 
The proof for type $\text{E}_6$ is almost identical. 
Note that the key idea of the proof here is same with \cite[Lemma 5.8]{JK19}.

Let ${\bf p} \in \mathcal{P}_7$ be given.
To prove ${\bf d}({\bf p}) \in \mathcal{D}^{\J}$, we introduce an operator on $\mathcal{P}_7$.
To do this, let us consider the array with the dashed arrows in \eqref{eq:dashed arrows}. 
We denote by ${\bf p}^c$ the set of dots in $\Delta_{10}$ outside ${\bf p}$.
For ${\bf p}, {\bf p}' \in \mathcal{P}_7$, we define
\begin{equation*}
	{\bf p} \cong {\bf p}' \, \Longleftrightarrow \, {\bf d}({\bf p}) = {\bf d}({\bf p}'),
\end{equation*}
and let $\overline{\mathcal{P}_7}$ be the set of the equivalent classes of the paths ${\bf p} \in \mathcal{P}_7$ with respect to the relation $\cong$.
Then we define the operator $\texttt{p}_i$ by
\begin{equation*}
	\texttt{p}_i : 
	\xymatrixcolsep{3pc}\xymatrixrowsep{0pc}
\xymatrix{
  \mathcal{P}_7 \ \ar@{->}[r] &  \overline{\mathcal{P}_7} \cup \{ \, {\bf 0} \, \}  \\
 {\bf p} \ar@{|->}[r] & \texttt{p}_i ({\bf p})
}	
\end{equation*}
where $\texttt{p}_i ({\bf p})$ is the equivalent class of the paths in $\mathcal{P}_7$ determined by changing the dot in ${\bf p}^{c} \cap \Delta^{\J}$ located at the position of the leftmost $i$ on the array in \eqref{eq:dashed arrows} along the dashed arrow between $i$'s by one position if it is possible, otherwise we assume $\texttt{p}_i ({\bf p}) = {\bf 0}$.
For ${\bf p}' \in \texttt{p}_i({\bf p})\neq {\bf 0}$, write ${\bf p} \, \overset{\texttt{p}_i}{\longrightarrow} \, {\bf p}'$.

For example, if a path ${\bf p} \in \mathcal{P}_7$ has the black-filled dot outside ${\bf p}$ located at the position of $2$ in the third row of \eqref{eq:dashed arrows}, then $\texttt{p}_2({\bf p}) \neq {\bf 0}$ and it is a path in $\mathcal{P}_7$ determined by interchanging the black-filled dot with the dot located at the position of $2$ in the second row of \eqref{eq:dashed arrows} along the dashed arrow between $2$'s.
\begin{equation*}
\scalebox{0.7}{
\begin{tikzpicture}[<-,>=stealth',auto,node distance=3cm,main node/.style={circle,draw,font=\sffamily\Large\bfseries}]
  %% node
  %%%
  \node at (-0.8,0) {${}_{\circ}$}; % 1st floor & 0
  \node at (-0.1,0) {${}_{\circ}$}; % 1st floor & 1
  \node at (0.6,0) {${}_{\circ}$}; % 1st floor & 2
  \node at (1.3,0) {${}_{\circ}$}; % 1st floor & 3
  \node at (2,0) {${}_{\circ}$}; % 1st floor & 4
  \node at (2.7,0) {${}_{\circ}$}; % 1st floor & 5
  \node at (3.4,0) {${}_{\circ}$}; % 1st floor & 5
  \node at (4.1,0) {${}_{\circ}$}; % 1st floor & 5
  \node at (4.8,0) {${}_{\circ}$}; % 1st floor & 5

  \node at (-0.1,-0.6) {${}_{\circ}$}; % 1st floor & 1
  \node at (0.6,-0.6) {${}_{\circ}$}; % 1st floor & 2
  \node (2-1) at (1.3, -0.6) {${}_{\circ}$}; % 2nd floor & 6
  \node at (2, -0.6) {${}_{\circ}$}; % 2nd floor & 7
  \node at (2.7, -0.6) {${}_{\circ}$}; % 2nd floor & 8
  \node at (3.4,-0.6) {${}_{\circ}$}; % 1st floor & 5
  \node at (4.1,-0.6) {${}_{\circ}$}; % 1st floor & 5
  \node at (4.8,-0.6) {${}_{\circ}$}; % 1st floor & 5

  \node at (0.6,-1.3) {${}_{\circ}$}; % 1st floor & 2
  \node at (1.3, -1.3) {${}_{\circ}$}; % 2nd floor & 6
  \node at (2, -1.3) {${}_{\circ}$}; % 3rd floor & 9
  \node at (2.7, -1.3) {${}_{\circ}$}; % 3rd floor & 10
  \node (2-2) at (3.4, -1.3) {${}_{\bullet}$}; % 3rd floor & 11
  \node at (4.1,-1.3) {${}_{\circ}$}; % 1st floor & 5
  \node at (4.8,-1.3) {${}_{\circ}$}; % 1st floor & 5

  \node at (1.3, -2) {${}_{\circ}$}; % 2nd floor & 6
  \node at (2, -2) {${}_{\circ}$}; % 4th floor & 12
  \node at (2.7, -2) {${}_{\circ}$}; % 4th floor & 13
  \node at (3.4, -2) {${}_{\circ}$}; % 4th floor & 14
  \node at (4.1, -2) {${}_{\circ}$}; % 4th floor & 15
  \node at (4.8, -2) {${}_{\circ}$}; % 4th floor & 16

  \node at (2, -2.7) {${}_{\circ}$}; % 5th floor & 17
  \node at (2.7, -2.7) {${}_{\circ}$}; % 5th floor & 17
  \node at (3.4, -2.7) {${}_{\circ}$}; % 5th floor & 18
  \node at (4.1, -2.7) {${}_{\circ}$}; % 5th floor & 19
  \node at (4.8, -2.7) {${}_{\circ}$}; % 5th floor & 20
  
  \node at (2.7, -3.4) {${}_{\circ}$}; % 5th floor & 17
  \node at (3.4, -3.4) {${}_{\circ}$}; % 5th floor & 18
  \node at (4.1, -3.4) {${}_{\circ}$}; % 6th floor & 22
  \node at (4.8, -3.4) {${}_{\circ}$}; % 6th floor & 23
  
  \node at (3.4, -4.1) {${}_{\circ}$}; % 5th floor & 18
  \node at (4.1, -4.1) {${}_{\circ}$}; % 6th floor & 22
  \node at (4.8, -4.1) {${}_{\circ}$}; % 7th floor & 25
  
  \node at (4.1, -4.8) {${}_{\circ}$}; % 6th floor & 22
  \node at (4.8, -4.8) {${}_{\circ}$}; % 8th floor & 26
  
  \node at (4.8, -5.5) {${}_{\circ}$}; % 8th floor & 26

    \node at (-2, -2.5) {\scalebox{1.2}{${\bf p}=$}};
      \node at (6.7, -2.5) {\scalebox{1.2}{$\overset{\texttt{p}_2}{\longrightarrow}$}};
  
  %% triple path
  \draw[-, line width=0.5mm] (4.2, 0) -- (4.7, 0);
  \draw[-, line width=0.5mm] (3.5, 0) -- (4, 0);
  \draw[-, line width=0.5mm] (2.8, 0) -- (3.3, 0);
  \draw[-, line width=0.5mm] (2.7, -0.1) -- (2.7, -0.5);
  \draw[-, line width=0.5mm] (2.1, -0.6) -- (2.6, -0.6);
  \draw[-, line width=0.5mm] (1.4, -0.6) -- (1.9, -0.6);
  \draw[-, line width=0.5mm] (1.3, -0.7) -- (1.3, -1.2);
  \draw[-, line width=0.5mm] (1.3, -1.4) -- (1.3, -1.9);
  
  \draw[-, line width=0.5mm] (4.1, -0.1) -- (4.1, -0.5);
  \draw[-, line width=0.5mm] (4.1, -0.7) -- (4.1, -1.2);
  \draw[-, line width=0.5mm] (4.1, -1.4) -- (4.1, -1.9);
  \draw[-, line width=0.5mm] (4.1, -2.1) -- (4.1, -2.6);
  \draw[-, line width=0.5mm] (3.5, -2.7) -- (4, -2.7);
  \draw[-, line width=0.5mm] (3.4, -2.8) -- (3.4, -3.3);
  \draw[-, line width=0.5mm] (3.4, -3.5) -- (3.4, -4);
  
  \draw[-, line width=0.5mm] (4.8, -0.1) -- (4.8, -0.5);
  \draw[-, line width=0.5mm] (4.8, -0.7) -- (4.8, -1.2);
  \draw[-, line width=0.5mm] (4.8, -1.4) -- (4.8, -1.9);
  \draw[-, line width=0.5mm] (4.8, -2.1) -- (4.8, -2.6);
  \draw[-, line width=0.5mm] (4.8, -2.8) -- (4.8, -3.3);
  \draw[-, line width=0.5mm] (4.2, -3.4) -- (4.7, -3.4);
  \draw[-, line width=0.5mm] (4.1, -3.5) -- (4.1, -4);
  \draw[-, line width=0.5mm] (4.1, -4.2) -- (4.1, -4.7); 
  
  %% double path
   \draw[-, line width=0.5mm] (2.1, -1.3) -- (2.6, -1.3);
   \draw[-, line width=0.5mm] (2, -1.4) -- (2, -1.9);
   \draw[-, line width=0.5mm] (2, -2.1) -- (2, -2.6);
   \draw[-, line width=0.5mm] (2.7, -1.4) -- (2.7, -1.9);
   \draw[-, line width=0.5mm] (2.7, -2.1) -- (2.7, -2.6);
   \draw[-, line width=0.5mm] (2.7, -2.8) -- (2.7, -3.3);

    \path[every node/.style={font=\sffamily\small}]
	(2-1) edge[bend left=5, dashed] node [right] {} (2-2);
\end{tikzpicture}}
\scalebox{0.7}{
\begin{tikzpicture}
  %% node
  %%%
  \node at (-0.8,0) {${}_{\circ}$}; % 1st floor & 0
  \node at (-0.1,0) {${}_{\circ}$}; % 1st floor & 1
  \node at (0.6,0) {${}_{\circ}$}; % 1st floor & 2
  \node at (1.3,0) {${}_{\circ}$}; % 1st floor & 3
  \node at (2,0) {${}_{\circ}$}; % 1st floor & 4
  \node at (2.7,0) {${}_{\circ}$}; % 1st floor & 5
  \node at (3.4,0) {${}_{\circ}$}; % 1st floor & 5
  \node at (4.1,0) {${}_{\circ}$}; % 1st floor & 5
  \node at (4.8,0) {${}_{\circ}$}; % 1st floor & 5

  \node at (-0.1,-0.6) {${}_{\circ}$}; % 1st floor & 1
  \node at (0.6,-0.6) {${}_{\circ}$}; % 1st floor & 2
  \node at (1.3, -0.6) {${}_{\bullet}$}; % 2nd floor & 6
  \node at (2, -0.6) {${}_{\circ}$}; % 2nd floor & 7
  \node at (2.7, -0.6) {${}_{\circ}$}; % 2nd floor & 8
  \node at (3.4,-0.6) {${}_{\circ}$}; % 1st floor & 5
  \node at (4.1,-0.6) {${}_{\circ}$}; % 1st floor & 5
  \node at (4.8,-0.6) {${}_{\circ}$}; % 1st floor & 5

  \node at (0.6,-1.3) {${}_{\circ}$}; % 1st floor & 2
  \node at (1.3, -1.3) {${}_{\circ}$}; % 2nd floor & 6
  \node at (2, -1.3) {${}_{\circ}$}; % 3rd floor & 9
  \node at (2.7, -1.3) {${}_{\circ}$}; % 3rd floor & 10
  \node at (3.4, -1.3) {${}_{\circ}$}; % 3rd floor & 11
  \node at (4.1,-1.3) {${}_{\circ}$}; % 1st floor & 5
  \node at (4.8,-1.3) {${}_{\circ}$}; % 1st floor & 5

  \node at (1.3, -2) {${}_{\circ}$}; % 2nd floor & 6
  \node at (2, -2) {${}_{\circ}$}; % 4th floor & 12
  \node at (2.7, -2) {${}_{\circ}$}; % 4th floor & 13
  \node at (3.4, -2) {${}_{\circ}$}; % 4th floor & 14
  \node at (4.1, -2) {${}_{\circ}$}; % 4th floor & 15
  \node at (4.8, -2) {${}_{\circ}$}; % 4th floor & 16
  
  \node at (2, -2.7) {${}_{\circ}$}; % 5th floor & 17
  \node at (2.7, -2.7) {${}_{\circ}$}; % 5th floor & 17
  \node at (3.4, -2.7) {${}_{\circ}$}; % 5th floor & 18
  \node at (4.1, -2.7) {${}_{\circ}$}; % 5th floor & 19
  \node at (4.8, -2.7) {${}_{\circ}$}; % 5th floor & 20
  
  \node at (2.7, -3.4) {${}_{\circ}$}; % 5th floor & 17
  \node at (3.4, -3.4) {${}_{\circ}$}; % 5th floor & 18
  \node at (4.1, -3.4) {${}_{\circ}$}; % 6th floor & 22
  \node at (4.8, -3.4) {${}_{\circ}$}; % 6th floor & 23
  
  \node at (3.4, -4.1) {${}_{\circ}$}; % 5th floor & 18
  \node at (4.1, -4.1) {${}_{\circ}$}; % 6th floor & 22
  \node at (4.8, -4.1) {${}_{\circ}$}; % 7th floor & 25
  
  \node at (4.1, -4.8) {${}_{\circ}$}; % 6th floor & 22
  \node at (4.8, -4.8) {${}_{\circ}$}; % 8th floor & 26
  
  \node at (4.8, -5.5) {${}_{\circ}$}; % 8th floor & 26
  %%
  
  %% quadruple path
  \draw[-, line width=0.5mm] (4.2, 0) -- (4.7, 0);
  \draw[-, line width=0.5mm] (3.5, 0) -- (4, 0);
  \draw[-, line width=0.5mm] (2.8, 0) -- (3.3, 0);
  \draw[-, line width=0.5mm] (2.7, -0.1) -- (2.7, -0.5);
  \draw[-, line width=0.5mm] (2.1, -0.6) -- (2.6, -0.6);
  \draw[-, line width=0.5mm] (2, -0.7) -- (2, -1.2);
  \draw[-, line width=0.5mm] (2, -1.4) -- (2, -1.9);
  \draw[-, line width=0.5mm] (2, -2.1) -- (2, -2.6);

  \draw[-, line width=0.5mm] (4.8, -0.1) -- (4.8, -0.5);
  \draw[-, line width=0.5mm] (4.2, -0.6) -- (4.7, -0.6);
  \draw[-, line width=0.5mm] (3.5, -0.6) -- (4, -0.6);
  \draw[-, line width=0.5mm] (3.4, -0.7) -- (3.4, -1.2);
  \draw[-, line width=0.5mm] (2.8, -1.3) -- (3.3, -1.3);
  \draw[-, line width=0.5mm] (2.7, -1.4) -- (2.7, -1.9);
  \draw[-, line width=0.5mm] (2.7, -2.1) -- (2.7, -2.6);
  \draw[-, line width=0.5mm] (2.7, -2.8) -- (2.7, -3.3);
  
  \draw[-, line width=0.5mm] (4.8, -0.7) -- (4.8, -1.2);
  \draw[-, line width=0.5mm] (4.2, -1.3) -- (4.7, -1.3);
  \draw[-, line width=0.5mm] (4.1, -1.4) -- (4.1, -1.9);
  \draw[-, line width=0.5mm] (4.1, -2.1) -- (4.1, -2.6);
  \draw[-, line width=0.5mm] (3.5, -2.7) -- (4, -2.7);
  \draw[-, line width=0.5mm] (3.4, -2.8) -- (3.4, -3.3); 
  \draw[-, line width=0.5mm] (3.4, -3.5) -- (3.4, -4); 
  
  \draw[-, line width=0.5mm] (4.8, -1.4) -- (4.8, -1.9);
  \draw[-, line width=0.5mm] (4.8, -2.1) -- (4.8, -2.6);
  \draw[-, line width=0.5mm] (4.8, -2.8) -- (4.8, -3.3);
  \draw[-, line width=0.5mm] (4.2, -3.4) -- (4.7, -3.4);
  \draw[-, line width=0.5mm] (4.1, -3.5) -- (4.1, -4);
  \draw[-, line width=0.5mm] (4.1, -4.2) -- (4.1, -4.7); 
\end{tikzpicture}
}
\end{equation*}
where the path in the right-hand side is a representative in $\texttt{p}_2({\bf p})$.

It is clear that there exists the unique quadruple path ${\bf p}_0'$ in $\mathcal{P}_7$ such that ${\bf d}({\bf p}_0') = {\bf d}(\pi_0')$ (see Remark \ref{rem:connectedness}).
By the configurations of the paths in Definition \ref{df:paths in Pn}(2), one can check that $\texttt{p}_i$ is well-defined and for any ${\bf p} \neq {\bf p}_0'$, there exists $i$ such that $\texttt{p}_i({\bf p}) \neq {\bf 0}$. 
This implies that ${\bf p}_0'$ is obtained from ${\bf p}$ by applying $\texttt{p}_i$'s successively with finitely many steps, that is, there exists sequences $(i_2, \dots, i_m)$ and $({\bf p}_m, \dots, {\bf p}_1)$  such that ${\bf p}_k \in \mathcal{P}_7$ $(1 \le k \le m)$ and 
\begin{equation*}
	{\bf p} = {\bf p}_m \, \overset{\texttt{p}_{i_m}}{\longrightarrow} \, {\bf p}_{m-1} 
	\, \overset{\texttt{p}_{i_{m-1}}}{\longrightarrow} \,	\cdots 
	\, \overset{\texttt{p}_{i_2}}{\longrightarrow} \, 
 	{\bf p}_1 = {\bf p}_0'.
\end{equation*}
Then it allows us to verify that ${\bf d}({\bf p})$ is in $\mathcal{D}^{\J}$ for ${\bf p} \in \mathcal{P}_7$ by an inductive argument on $m$.
More precisely, if ${\bf d}({\bf p}_{m-1}) \in \mathcal{D}^{\J}$, then ${\bf d}({\bf p}_{m-1})$ is one of the ${\bf j}_0$-trails in Section \ref{subsec:j0-trails}. Since ${\bf p}_m \, \overset{\texttt{p}_{i_m}}{\longrightarrow} \, {\bf p}_{m-1}$, the array ${\bf d}({\bf p}_{m-1})$ is obtained from ${\bf d}({\bf p}_m)$ by moving $1$ on ${\bf d}({\bf p}_m)$ located at the position of $i_m$ along the dashed arrow between $i_m$'s in \eqref{eq:dashed arrows}.
Then it is straightforward to check that if ${\bf d}({\bf p}_m) \notin \mathcal{D}^{\J}$, then ${\bf p}_m$ cannot be a path in $\mathcal{P}_7$, which is a contradiction.

By Remark \ref{rem:connectedness}, for any ${\bf d}(\pi)$, the array ${\bf d}(\pi_0') = {\bf d}({\bf p}_0')$ is obtained from ${\bf d}(\pi)$ by moving $1$'s successively following \eqref{eq:dashed arrows} in which each step corresponds to $\texttt{p}_i$ for some $i$. Hence the surjectivity of $\Psi$ is proved.

Finally, for ${\bf p} \in \Psi^{-1}({\bf d})$ and ${\bf c} = (c_{\beta_k}) \in \B^{\J}$, since $||{\bf c}||_{\bf p}$ is equal to the sum of $c_{\beta_k}$'s located at the position of $0$'s in ${\bf d}$, the last statement is proved.
\end{proof}

\begin{rem}
{\em
We would like to give a technical remark related to paths in type $\text{E}_{6,7}$.
Unlike type $\text{E}_6$ (cf. Remark \ref{rem:remark for analog for type E}), the subsequence \eqref{eq:minimal seq} of ${\bf j}_0$-trails $\pi$ at which $||c||_\pi$ is maximal contains a pair of commuting letters $2$ and $3$.  Due to this difference, the combinatorics of trails for type $\text{E}_7$ is more involved (cf. Definition \ref{df:paths in Pn}).
}
\end{rem}
\vskip 2mm

\subsubsection{Proof of Theorem \ref{thm:formula of epsilon star}}
By Lemma \ref{lem:from Pr to DJ}, there exists a surjection from $\mathcal{P}_n$ to $\mathcal{T}^{\,'}$. Then for $\pi \in \mathcal{T}^{\,'}$, we have
\begin{equation*}
	||{\bf c}||_{\pi} = ||{\bf c}||_{\bf p},
\end{equation*}
where ${\bf p}$ is the corresponding path in $\mathcal{P}_n$ under the surjection.
Hence, by Lemma \ref{lem:reduction}, we obtain the desired formula of $\varepsilon_r^*({\bf c})$.
\qed

\end{document}